\documentclass[12pt]{amsart}


\usepackage{amsmath,amsfonts,amssymb,mathrsfs,stmaryrd} 
\usepackage[margin=3cm]{geometry}
\usepackage{color}
\usepackage{graphics}
\usepackage{enumerate}
\usepackage{amscd} 
\usepackage{amsxtra}
\usepackage{epic,eepic}
\usepackage{subsupscripts} 
\usepackage[hypertex]{hyperref} 


\setlength{\marginparwidth}{65pt}


\setcounter{tocdepth}{1}

%
\newtheorem{theorem}{Theorem}[section]
\newtheorem{corollary}[theorem]{Corollary}
\newtheorem{lemma}[theorem]{Lemma}

\newtheorem{proposition}[theorem]{Proposition}
\newtheorem{assumption}[theorem]{Assumption}
\newtheorem{conjecture}[theorem]{Conjecture} 

\theoremstyle{definition} 
\newtheorem{definition}[theorem]{Definition}
\newtheorem{example}[theorem]{Example}

\newtheorem{remark}[theorem]{Remark}
\newtheorem{notation}[theorem]{Notation} 

\def\parfrac#1#2{\frac{\partial{#1}}{\partial #2}}

%
%
%
\newcommand{\cA}{\mathcal{A}}
\newcommand{\cE}{\mathcal{E}}
\newcommand{\ocE}{\overline{\cE}}
\newcommand{\cK}{\mathcal{K}}
\newcommand{\cL}{\mathcal{L}}
\newcommand{\cM}{\mathcal{M}}
\newcommand{\hcM}{\widehat{\cM}} 
\newcommand{\cN}{\mathcal{N}}
\newcommand{\cO}{\mathcal{O}}
\newcommand{\cP}{\mathcal{P}}
\newcommand{\cR}{\mathcal{R}}
\newcommand{\hcR}{\widehat{\cR}}
\newcommand{\cU}{\mathcal{U}}
\newcommand{\cS}{\mathcal{S}} 
\newcommand{\hcS}{\widehat{\cS}}
\newcommand{\hD}{\widehat{D}}
\newcommand{\hV}{\widehat{V}} 
\newcommand{\hL}{\widehat{L}}
\newcommand{\hbeta}{\hat{\beta}} 
\newcommand{\halpha}{\hat{\alpha}} 
\newcommand{\hSigma}{\widehat{\Sigma}} 
\newcommand{\hS}{\widehat{S}} 
\newcommand{\hu}{\hat{u}} 

\newcommand{\hzero}{\hat{0}} 
 
\newcommand{\hE}{\widehat{E}}
\newcommand{\hs}{\hat{s}}
\newcommand{\hpsi}{\hat{\psi}}

\newcommand{\tD}{\widetilde{D}} 

\newcommand{\tS}{\widetilde{S}} 
\newcommand{\tp}{\tilde{p}}

\newcommand{\tSigma}{\widetilde{\Sigma}} 
\newcommand{\tK}{\widetilde{K}}
\newcommand{\tu}{\tilde{u}}


\newcommand{\ol}[1]{\overline{#1}}

\newcommand{\ran}{\right\rangle}
\newcommand{\lan}{\left\langle}



\newcommand{\MM}{\mathfrak{M}}
\newcommand{\mm}{\mathfrak{m}} 
\newcommand{\frg}{\mathfrak{g}} 
\newcommand{\frt}{\mathfrak{t}} 
\newcommand{\frf}{\mathfrak{f}} 
\newcommand{\fra}{\mathfrak{a}} 
\newcommand{\frb}{\mathfrak{b}} 
\newcommand{\frr}{\mathfrak{r}}
\newcommand{\frs}{\mathfrak{s}} 
\newcommand{\frS}{\mathfrak{S}} 
\newcommand{\frR}{\mathfrak{R}} 
\newcommand{\frK}{\mathfrak{K}} 
\newcommand{\frU}{\mathfrak{U}} 

\newcommand{\Spec}{\operatorname{Spec}}
\newcommand{\Pic}{\on{Pic}}
 
\newcommand{\Jac}{\on{Jac}}
\newcommand{\Int}{\on{Int}}

%


\renewcommand{\P}{\bb{P}} 
\newcommand{\C}{\bb{C}}
\newcommand{\D}{\bb{D}}
\newcommand{\bL}{\bb{L}}
\newcommand{\Z}{\bb{Z}}
\newcommand{\R}{\bb{R}}
\newcommand{\Q}{\bb{Q}}
\newcommand{\T}{\bb{T}}
\newcommand{\G}{\bb{G}}

\newcommand{\F}{\bb{F}} 
\newcommand{\E}{\bb{E}}
\newcommand{\bE}{\mathbf{E}} 

%
\newcommand{\quu}{/\kern-.7ex/\kern-.7ex/}

%
%
\newcommand{\on}{\operatorname} 
\newcommand{\Aut}{ \on{Aut} }
\newcommand{\Hom}{ \on{Hom} }
\newcommand{\Ker}{ \on{Ker} }
\newcommand{\ev}{\on{ev}} 
\newcommand{\vdim}{\on{vir. dim}}

\newcommand{\NE}{\on{NE}}
\newcommand{\Bl}{\on{Bl}} 
\newcommand{\Ham}{\on{Ham}} 
\newcommand{\pr}{\on{pr}}
\newcommand{\id}{\on{id}} 
 
\renewcommand{\Im}{\on{Im}}
\newcommand{\Map}{\on{Map}} 

\newcommand{\KS}{\operatorname{KS}} 
\newcommand{\ks}{\operatorname{ks}} 
\newcommand{\frks}{\mathfrak{ks}}

\newcommand{\bb}[1]{\ensuremath{\mathbb{#1}}}
\newcommand{\PD}{{\rm PD}}



%
%

\DeclareGraphicsRule{.tif}{png}{.png}{`convert #1 `dirname
 #1`/`basename #1 .tif`.png}

\title[Seidel elements and potential functions]
{Seidel elements and potential functions of holomorphic disc counting} 

\date{\today}

\author{Eduardo Gonz\'alez}
\address{Department of Mathematics\\
UMASS Boston\\100 Morrisey Blvd\\
Boston, MA 02125. USA.}
\email{eduardo@math.umb.edu}

\author{Hiroshi Iritani}
\address{Department of Mathematics, Graduate School of Science\\
  Kyoto University\\
  Oiwake-cho, Kitashirakawa, Sakyo-ku, Kyoto,
  606-8502, Japan.}
\email{iritani@math.kyoto-u.ac.jp}
\subjclass[2010]{
Primary 53D45; Secondary 53D12, 53D37.
}
\keywords{Lagrangian Torus Fibres, Potential
Functions, Holomorphic Discs, Mirror Symmetry, Jacobian Ring.
}

\begin{document}
\maketitle
\begin{abstract}
  Let $M$ be a symplectic manifold equipped with a
  Hamiltonian circle action and let $L$ be an
  invariant Lagrangian submanifold of $M$.  We
  study the problem of counting holomorphic
  \emph{disc sections} of the trivial $M$-bundle
  over a disc with boundary in $L$ through
  degeneration.  We obtain a conjectural
  relationship between the potential function of
  $L$ and the Seidel element associated to the
  circle action.  When applied to a Lagrangian
  torus fibre of a semi-positive toric manifold,
  this degeneration argument reproduces a
  conjecture (now a theorem) of
  Chan-Lau-Leung-Tseng \cite{CLLT11,CLLT12}
  relating certain correction terms appearing in
  the Seidel elements with the potential function.
\end{abstract}



\section{Introduction}

Let $M$ be a symplectic manifold with a
Hamiltonian circle action.  Seidel \cite{Se-pi97}
constructed an invertible element of the quantum
cohomology of $M$ by counting pseudo-holomorphic
sections of the associated $M$-bundle $E$ over
$S^2$:
\[
E = (M \times S^3) / S^1 
\]
where $S^1$ acts by the diagonal action and $S^3
\to S^2$ is the Hopf fibration.  Seidel elements
have been used to detect
essential 
loops in the group $\Ham(M,\omega)$ of Hamiltonian
diffeomorphisms.  McDuff-Tolman \cite{McTo-To06}
used them to verify Batyrev's presentation of
quantum cohomology rings for toric varieties.

In a previous paper \cite{GoIr-Se12}, we
computed Seidel elements of semi-positive toric
manifolds and found that they are closely related
to Givental's mirror transformation
\cite{Gi-A-98}.  Chan-Lau-Leung-Tseng
\cite{CLLT11} conjectured that certain correction
terms appearing in our computation of Seidel
elements determine the potential function of a
Lagrantian torus fibre.  The potential function
here is given by counting holomorphic discs with
boundary in a Lagrangian torus fibre and is
thought of as a mirror of the toric variety.  The
conjecture was proved by themselves \cite{CLLT12}
in a recent preprint.  In this paper, we propose
an alternative approach which relates Seidel
elements and potential functions via
\emph{degeneration}.  Our method should apply to a
general symplectic manifold $M$ with a Hamiltonian
$S^1$-action and an invariant Lagrangian.

We assume that $M$ is a smooth projective variety,
equipped with a $\C^\times$-action and an
$S^1$-invariant K\"{a}hler form $\omega$.  Let $L$
be an $S^1$-invariant Lagrangian submanifold of
$M$.  Let $\cM_1(\beta)$ denote the moduli space
of genus-zero bordered stable holomorphic maps
from $(\Sigma, \partial\Sigma)$ to $(M,L)$ with
one boundary marking and representing $\beta \in
H_2(M,L)$.  By the fundamental work of
Fukaya-Oh-Ohta-Ono \cite{FuOh-La09,
  FOOO-technical12}, $\cM_1(\beta)$ is compact and
carries a Kuranishi structure with boundary and
corner.  Let $\beta$ be a class of Maslov index
two.  Under certain assumptions (see \S
\ref{subsec:potential-general}), the virtual
fundamental \emph{chain} of $\cM_1(\beta)$ is a
\emph{cycle} of dimension $\dim_\R L$ and one can
define the \emph{open Gromov-Witten invariant}
$n_\beta\in \Q$ by
\[
\ev_*[\cM_1(\beta)]^{\rm vir} = n_\beta [L] 
\]
where $\ev\colon\cM_1(\beta) \to L$ is the evaluation map. 
The potential function $W$ is 
\[
W = \sum_{\beta \in H_2(M,L) : \mu(\beta) =2}
n_\beta z^\beta.
\]
The idea of degeneration is that instead of
counting discs in $(M,L)$, we consider the problem
of counting \emph{disc sections} of the trivial
bundle $M\times \D \to \D$ with boundary in
$L\times S^1$.  Then we degenerate the target
$M\times \D$ to the union $E\cup_M (M\times \D)$.
From this geometry we expect the following
\emph{degeneration formula} (see \S
\ref{subsec:degeneration} for details):
\begin{equation} 
\label{eq:degformula-introd} 
\varphi_*\ev_*[\cM_1(\hbeta)]^{\rm vir} 
= \sum_{r(\hbeta) = \sigma + \halpha} 
\ev_*[\cM_{\rm S}(\sigma) \times_M \cM^{\rm
  rel}_{1,1}(\halpha)]^{\rm vir}  
\end{equation} 
\emph{if} both-hand sides carry virtual fundamental 
\emph{cycles}, instead of \emph{chains}.  
Here $\hbeta \in H_2(M\times \D, L\times S^1)$ denotes 
a disc section class corresponding to $\beta \in H_2(M,L)$ 
and $\cM_1(\hbeta)$ is the corresponding moduli space of 
disc sections. 
The summation in the right-hand side is taken over 
all possible decompositions $\sigma + \halpha$ 
of the class $\hbeta$ into a section class $\sigma$ of $E$ 
and another disc section class $\halpha$ under the degeneration. 
Also $\cM_{\rm S}(\sigma)$ is a moduli space of 
holomorphic sections of $E$ in the class $\sigma$, 
which is relevant to the Seidel element. 
This formula relates disc counts of different boundary types; 
the boundary classes $\partial \alpha$ and $\partial \beta$ 
from the both-hand sides differ exactly by the $S^1$-action.

The degeneration formula predicts a relationship 
between the Seidel element of the $S^1$-action and 
the potential function $W$. 
We need the following conditions in order to extract 
meaningful information from the formula \eqref{eq:degformula-introd}: 
\begin{itemize} 
\item[(i)] $\cM_1(\beta)$ is empty for all $\beta
\in H_2(M,L) $ with $\mu(\beta) \le 0$.  

\item[(ii)] The maximal fixed component $F_{\rm max}\subset M$ of 
the $\C^\times$-action (see \S \ref{subsec:Seidel}) 
is of complex codimension one and the $\C^\times$-weight 
on the normal bundle is $-1$. 

\item[(iii)] $c_1(M)$ is semi-positive. 

\item[(iv)] $\ev(\cM_{\rm S}(\sigma))$ is disjoint from $L$ 
for all $\sigma\in H_2^{\rm sec}(E)$ such that 
$\lan c_1^{\rm vert}(E), \sigma \ran = -1$.  
\end{itemize} 
\begin{theorem}[Corollary \ref{cor:degeneration}] 
Assume that $M$ is simply-connected and $L$ is connected. 
Assume that the degeneration formula \eqref{eq:degformula-introd} 
holds (see Conjecture \ref{conj:degeneration} for a precise formulation) 
and that the above conditions (i)--(iv) are satisfied. 
Then 
\begin{equation*} 
z^{\alpha_0} = \langle \hS^{(2)}, dW \rangle + \tS^{(0)}  
\end{equation*} 
holds in a certain ``open" Novikov ring $\Lambda^{\rm op}$ 
(see \S \ref{subsec:potential-general}), where 
\begin{itemize} 
\item $\alpha_0 \in H_2(M,L)$ is the maximal disc class 
defined by rotating a path connecting $L$ 
and $F_{\rm max}$ by the $S^1$-action 
(see \S \ref{subsec:relativehomology}); 

\item $dW = \sum_{\mu(\beta)=2} \beta \otimes n_\beta z^\beta$ 
is the logarithmic derivative of $W$; 

\item $\tS = \tS^{(0)} +  \tS^{(2)}$ is the Seidel element 
associated to the $S^1$-action and $\tS^{(i)} 
\in H^i(M) \otimes \Lambda$ 
($\Lambda$ is the ``closed" Novikov ring in Remark 
\ref{rem:Seidelrep}); 

\item $\hS^{(2)} \in H^2(M,L)\otimes \Lambda$ is 
a lift of $\tS^{(2)}$ (see Definition \ref{def:Seidellift}). 
\end{itemize} 
In particular, 
\[
\KS(\tS) = [z^{\alpha_0}] 
\]
holds in a certain Jacobi algebra of $W$, where $\KS$ denotes 
the Kodaira-Spencer mapping (see the end of \S \ref{subsubsec:conjecture}). 
\end{theorem}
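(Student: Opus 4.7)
The plan is to apply the assumed degeneration formula \eqref{eq:degformula-introd} to a single distinguished disc section class and then regroup both sides to recognise the terms of the claimed identity. I would take $\hbeta=\halpha_0$, the natural disc section lift of the maximal disc class $\alpha_0\in H_2(M,L)$. Condition (i), which kills all disc classes of nonpositive Maslov index, together with the $S^1$-equivariance of the geometry and the minimal nature of $\alpha_0$ among Maslov-two classes arising from the $S^1$-rotation of paths based on $L$, should force the virtual fundamental cycle of $\cM_1(\halpha_0)$ to collapse onto the family of rotates of constant disc sections valued in $L$. Tracking weights, the left-hand side of \eqref{eq:degformula-introd} then becomes $z^{\alpha_0}\cdot [L]$ in $H_\ast(L)\otimes\Lambda^{\rm op}$.

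Next I would stratify the right-hand side of \eqref{eq:degformula-introd} according to the vertical Chern number $\langle c_1^{\rm vert}(E),\sigma\rangle$. Condition (iii) confines the summation to $\langle c_1^{\rm vert}(E),\sigma\rangle\ge -1$, and condition (iv) eliminates the $-1$ stratum because the fibre product $\cM_{\rm S}(\sigma)\times_M\cM^{\rm rel}_{1,1}(\halpha)$ is empty once $\ev(\cM_{\rm S}(\sigma))$ misses $L$. In the $+1$ stratum, condition (ii) forces $\sigma$ to be the maximal section associated to $F_{\max}$ and $\halpha$ to be the trivial disc section, producing exactly $\tS^{(0)}\cdot [L]$. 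The $0$ stratum sees section classes that enter the $H^2$-part of the Seidel element and, paired with Maslov-two disc classes $\beta$ counted by $n_\beta$, assembles into $\langle \hS^{(2)},dW\rangle\cdot [L]$; the lift $\hS^{(2)}\in H^2(M,L)\otimes\Lambda$ of Definition \ref{def:Seidellift} is engineered precisely so that the gluing at the fibre product becomes this relative pairing.

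Equating the two sides and reading off the coefficient of $[L]$---here $L$ connected and $M$ simply connected are used so that absolute-to-relative lifts of classes are unambiguous at the level of Novikov weights---yields the identity
\[
z^{\alpha_0}=\langle \hS^{(2)},dW\rangle+\tS^{(0)}
\]
in $\Lambda^{\rm op}$. The Kodaira-Spencer consequence $\KS(\tS)=[z^{\alpha_0}]$ is then obtained by passing to the Jacobi algebra of $W$, whose defining relations $\partial_i W=0$ annihilate the term $\langle \hS^{(2)},dW\rangle$ and leave $\tS^{(0)}$ matched with $z^{\alpha_0}$.

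The hardest step will be the $0$-stratum analysis on the right-hand side: verifying that the contributions from decompositions with $\langle c_1^{\rm vert}(E),\sigma\rangle=0$ repackage precisely as $\langle \hS^{(2)},dW\rangle$ demands a careful gluing analysis at the degeneration node, and depends on the specific cohomology lift to $H^2(M,L)$ fixed in Definition \ref{def:Seidellift}. The bookkeeping of weights between the closed Novikov ring $\Lambda$ and the open Novikov ring $\Lambda^{\rm op}$ also merits care so that both sides of the identity lie in the same completion.
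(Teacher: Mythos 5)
The key gap is your decision to apply the degeneration formula once, to the single class $\hbeta=\halpha_0$. Under condition (ii) the maximal disc class $\alpha_0$ has Maslov index $2$, and $\cM_1(\alpha_0)$ is in general nonempty (in the toric case $\alpha_0=\beta_j$ and $n_{\beta_j}=1$), so the hypothesis of Conjecture \ref{conj:degeneration} --- that $\cM_1(\beta)=\emptyset$, which by Lemma \ref{lem:lhs} is exactly what makes $\cM_1(\hbeta)$ boundaryless and its virtual chain a cycle --- fails for this class. Even setting that aside, for $\mu(\beta)=2$ the class $\ev_*[\cM_1(\hbeta)]^{\rm vir}$ would lie in a homology group of $L\times S^1$ of degree exceeding $\dim(L\times S^1)$, hence would vanish; it cannot equal $z^{\alpha_0}[L]$, and indeed no Novikov variable appears in a single instance of \eqref{eq:degformula-introd}, whose two sides are ordinary homology classes of $L\times S^1$. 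In the paper the factor $z^{\alpha_0}$ arises from the weight bookkeeping of the retraction map, $r(\hzero)=\sigma_0+\halpha_0$, applied to the \emph{trivial} class $\beta=0$. Moreover, one application of the formula can only produce one Novikov-degree slice of the identity: the paper's proof (formula \eqref{eq:formula-underassump} and Theorem \ref{thm:degeneration}) applies \eqref{eq:degenerationformula-explicit} to \emph{every} $\beta$ with $\mu(\beta)=0$ and $\partial\beta=\gamma+\lambda$, multiplies each instance by $z^{\alpha_0+\beta}=q^{\sigma-\sigma_0}z^\alpha$, and sums over $\beta$ and then over $\gamma$; the essential input is that the left-hand side vanishes for every $\beta\neq 0$ by condition (i), and these infinitely many vanishing relations are precisely what generate all higher-order terms of $\langle\hS^{(2)},dW\rangle+\tS^{(0)}=z^{\alpha_0}$. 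Your proposal has no mechanism to capture those terms.

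Your stratification of the right-hand side is also misassigned. Condition (iv) does not ``eliminate'' the stratum $\langle c_1^{\rm vert}(E),\sigma\rangle=-1$: it guarantees $\ev(\cM_{\rm S}(\sigma))\cap L=\emptyset$, which makes the fibre products there boundaryless (Corollary \ref{cor:boundary-fibreproduct-empty}) and defines the relative class $\hcS_\sigma\in H^2(M,L)$; it is exactly this $-1$ stratum, paired with Maslov-two classes $\alpha$, that assembles into $\langle\hS^{(2)},dW\rangle$, while $\tS^{(0)}$ comes from the stratum $\langle c_1^{\rm vert}(E),\sigma\rangle=0$ paired with $\alpha=0$ (there is no $+1$ stratum playing this role, and (ii) does not force $\sigma=\sigma_0$ there). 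Finally, the passage to the Jacobi algebra is not by annihilating $\langle\hS^{(2)},dW\rangle$: the ideal of $\Jac(W)$ is generated only by $\langle\varphi,dW\rangle$ with $\varphi\in\Im(H^1(L)\to H^2(M,L))$, and $\hS^{(2)}$ is not of that form since it lifts $\tS^{(2)}\neq 0$. Rather, $\KS(\tS)$ is by definition the class of $\langle\hS^{(2)},dW\rangle+\tS^{(0)}$, which is well defined modulo that ideal independently of the chosen lift, so $\KS(\tS)=[z^{\alpha_0}]$ is a direct restatement of the identity, not a consequence of the critical-point equations.
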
 


In the second half of the paper, we apply these to
a semi-positive toric manifold $X$ and calculate
the potential function of a Lagrangian torus fibre
$L \subset X$.  In toric case, the potential
function can be regarded as a function on the
moduli space $\MM_{\text{opcl}}$ of Lagrangian
torus fibres $L$ together with complexfied
K\"{a}hler classes $-\omega+iB$ and lifts $h\in
H^2(X,L;U(1))$ of $\exp(iB)$ (see \S
\ref{subsubsec:open-closed}, $h$ defines a
$U(1)$-local system on $L$ when $B=0$).  The
potential function is of the form:
\[
W= w_1 + \cdots + w_m 
\]
with $w_i = f_i(q) z_i$, where $f_i(q)\in\Lambda$ 
is the \emph{correction term} defined by   
\[
f_i(q) = \sum_{d \in H_2(X;\Z): \lan c_1(X), d \ran =0} 
n_{\beta_i + d} q^d. 
\]
Each term $w_i$ corresponds to a prime toric divisor $D_i \subset X$ 
and arises from disc counting of fixed boundary type $b_i \in H_1(L)$. 
Applying the degeneration formula, we get: 
\begin{theorem}[Theorem \ref{thm:degeneration-toric}] 
\label{thm:degformula-toric-introd} 
Assume that the degeneration formula \eqref{eq:degformula-introd} 
(Conjecture \ref{conj:degeneration}) holds 
for $(X,L)$ equipped with the $\C^\times$-action $\rho_j$ 
rotating around the prime toric divisor $D_j$ (see \S \ref{subsec:Seidel-toric}). 
Let $\tS_j\in H^2(X)\otimes \Lambda$ be 
the Seidel element $\rho_j$ and let 
$\hS_j\in H^2(X,L)\otimes \Lambda$ be its lift.  Then we have 
\begin{equation*} 
  \langle  \hS_j, dw_k\rangle =\delta_{jk} z_j. 
\end{equation*} 
In particular we have $\KS(\tS_j) =[z_j]$.  
\end{theorem}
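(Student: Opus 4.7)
The strategy is to apply the main Corollary to each rotation action $\rho_j$ and then extract the individual identities $\langle \hS_j, dw_k\rangle = \delta_{jk}z_j$ by decomposing along the natural $H_1(L)$-grading on the open Novikov ring $\Lambda^{\rm op}$.

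First I would verify hypotheses (i)--(iv) of the Corollary for the triple $(X,L,\rho_j)$. Condition (i) is a standard consequence of the Cho--Oh classification of holomorphic discs bounding a torus fibre of a semi-positive toric manifold. Condition (ii) holds by the very construction of $\rho_j$: its maximal fixed component is the prime toric divisor $D_j$, of complex codimension one, and the $\C^\times$-weight on the normal bundle is $-1$. Condition (iii) is the standing semi-positivity assumption. Condition (iv) follows because any Seidel section with negative vertical Chern class is forced to lie in a fixed divisor of $\rho_j$ inside $X$, which is disjoint from the interior torus fibre $L$. Moreover, the maximal disc class $\alpha_0^{(j)}$, obtained by rotating a radial path from $L$ to $D_j$ under the $S^1$-action, is precisely the Cho--Oh basic class $\beta_j$, so $z^{\alpha_0^{(j)}} = z_j$. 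The Corollary thus delivers
\begin{equation*}
z_j = \langle \hS_j, dW \rangle + \tS_j^{(0)} \qquad \text{in } \Lambda^{\rm op}.
\end{equation*}

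It remains to separate this identity by the $H_1(L)$-grading on $\Lambda^{\rm op}$ induced by the boundary map $\partial\colon H_2(X,L) \to H_1(L) \cong \Z^n$. Both $\tS_j^{(0)} \in H^0(X)\otimes \Lambda$ and $\hS_j \in H^2(X,L)\otimes \Lambda$ have Novikov coefficients in the closed ring $\Lambda$, whose $H_2(X)$-grading maps trivially to $H_1(L)$. Hence $\tS_j^{(0)}$ lies in grading $0$, while $\langle \hS_j, dw_k\rangle$ inherits its $H_1(L)$-grading from the disc classes $\beta = \beta_k + d$ contributing to $dw_k$, all of which have boundary $b_k$; so $\langle \hS_j, dw_k\rangle$ lies in grading $b_k$. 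Since $z_j$ lies in grading $b_j$ and the primitive rays $b_1,\ldots,b_m$ of the fan of a smooth toric variety are pairwise distinct and nonzero, matching gradings forces $\tS_j^{(0)} = 0$, $\langle \hS_j, dw_j\rangle = z_j$, and $\langle \hS_j, dw_k\rangle = 0$ for $k \ne j$, which is the claimed formula. The equality $\KS(\tS_j) = [z_j]$ is then the \emph{in particular} clause of the Corollary applied to $\rho_j$.

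I expect the principal technical hurdle to lie in verifying condition (iv) in its sharp form together with the on-the-nose identification $\alpha_0^{(j)} = \beta_j$ in $H_2(X,L)$ (rather than merely modulo closed classes), and in confirming from Definition \ref{def:Seidellift} that the Novikov coefficients of the lift $\hS_j$ genuinely lie in the closed subring $\Lambda \subset \Lambda^{\rm op}$, so that the separation-of-gradings argument is legitimate and no contributions are shuffled between distinct $b_k$-sectors.
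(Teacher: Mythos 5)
Your proposal is correct and is essentially the paper's argument: verify Assumption \ref{assump:moduli} for $(X,L,\rho_j)$, identify $\lambda=-b_j$, $\alpha_0=\beta_j$ (so $z^{\alpha_0}=z_j$) and $w_k=W_{b_k}$, then feed this into the degeneration result. Two small points of comparison. First, the paper does not pass through Corollary \ref{cor:degeneration} and then re-separate: it applies the boundary-class-refined Theorem \ref{thm:degeneration} with $\gamma=b_k$, which gives $\delta_{b_k-b_j,0}\,z_j=\langle \hS_j, dw_k\rangle+\delta_{b_k,0}\tS_j^{(0)}$ directly (the $\tS^{(0)}$-term drops because $b_k\neq 0$; the paper also quotes \cite{GoIr-Se12} for $\tS_j\in H^2(X)\otimes \Q[\![\NE(X)_\Z]\!]$, i.e. $\tS_j^{(0)}=0$). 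Your route — summed Corollary plus the $H_1(L)$-grading of $\Lambda^{\rm op}$ — is equivalent and legitimate, precisely because the coefficients of $\hS_j$ and $\tS_j^{(0)}$ lie in $\Q[\![\NE(X)_\Z]\!]$ (grading $0$) while $dw_k$ sits in grading $b_k$, and the $b_k$ are pairwise distinct and nonzero; it even recovers $\tS_j^{(0)}=0$ without citing the earlier computation. Second, your justification of condition (iv) is misstated: it is not true in general that a stable map in a section class $\sigma$ with $\lan c_1^{\rm vert}(E_j),\sigma\ran=-1$ lies in a \emph{fixed} divisor of $\rho_j$ (for $\sigma=\sigma_0+d$ with $\lan c_1(X),d\ran=0$, $d\neq 0$, there can be fibre bubbles and section components landing in toric divisors $D_i$ with $i\neq j$ that are not fixed). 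What the paper's Proposition \ref{prop:toric-assump} actually proves, using semi-positivity of $c_1(X)$ together with the fact that a class $d$ with $\lan D_i,d\ran=0$ for all $i$ vanishes, is that any such stable map is contained in the union $\bigcup_i \hD_i$ of \emph{all} toric divisors of $E_j$; this containment in the full toric boundary is what yields $\ev(\cM_{\rm S}(\sigma))\cap L=\emptyset$, since $L$ lies in the open torus orbit. You correctly flagged (iv) as the main technical hurdle, but the argument you sketch for it would need to be replaced by this semi-positivity argument.
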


We observe in Theorem \ref{thm:CLLT} 
that the degeneration formula reproduces the following 
conjecture (now a theorem) of Chan-Lau-Leung-Tseng \cite{CLLT11,CLLT12}. 
\begin{theorem}[{\cite[Conjecture 4.12]{CLLT11}, 
\cite[Theorem 1.1]{CLLT12}}] 
\label{thm:CLLT-introd} 
Let $g_0^{(j)}(y)$, $j=1,\dots,m$ 
be explicit hypergeometric functions  
in variables $y_1,\dots,y_r$ ($r= \dim H^2(X)$) 
given in equation \eqref{eq:g0j}. 
Then we have 
\[
f_j(q) = \exp \left(g_0^{(j)}(y)\right) 
\]
under an explicit change of variables (mirror transformation) 
of the form $\log q_i = \log y_i + g_i(y)$, $i=1,\dots,r$ 
with $g_i(y) \in \Q[\![y_1,\dots,y_r]\!]$ and 
$g_i(0)=0$. 
\end{theorem}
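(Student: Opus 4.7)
The plan is to combine Theorem \ref{thm:degformula-toric-introd} with the explicit computation of Seidel elements for semi-positive toric manifolds established in our earlier paper \cite{GoIr-Se12}. There we showed that each Seidel element $\tS_j$ has the form $z^{\alpha_j}$ multiplied by a Novikov series whose structure, under Givental's mirror change of variables $\log q_i = \log y_i + g_i(y)$, reorganizes into the hypergeometric function $\exp g_0^{(j)}(y)$. The degeneration identity in Theorem \ref{thm:degformula-toric-introd} links this Seidel data directly to the disc-counting coefficients $f_j(q)$, so the strategy is to transport the known Seidel formula across this link.

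First I would expand both sides of $\langle \hS_j, dw_k\rangle = \delta_{jk} z_j$. Writing $w_k = f_k(q) z_k$ with $f_k(q) = \sum_{d:\, c_1(X)\cdot d = 0} n_{\beta_k+d}\, q^d$, the logarithmic derivative becomes
\[
dw_k = z_k\Bigl(\beta_k\otimes f_k(q) + \sum_{d:\,c_1(X)\cdot d = 0} d\otimes n_{\beta_k+d}\, q^d\Bigr),
\]
where $d\in H_2(X)$ is viewed in $H_2(X,L)$ via the natural map. The leading cohomological part of the lift $\hS_j^{(2)}$, inherited from the class of $D_j$, pairs with $\beta_k$ as $\delta_{jk}$, so the identity reduces to a recursion expressing $f_j(q)$ in terms of the higher Novikov corrections of $\hS_j$. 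Second, I would substitute the explicit formula for $\hS_j$ from \cite{GoIr-Se12} on the right-hand side. Under the mirror change of variables $\log q_i = \log y_i + g_i(y)$, these Novikov corrections organize into a single factor $\exp g_0^{(j)}(y)$, and the identity $\langle \hS_j, dw_j\rangle = z_j$ translates into $f_j(q(y)) = \exp g_0^{(j)}(y)$, which is exactly the content of Theorem \ref{thm:CLLT-introd}. The off-diagonal vanishing $\langle \hS_j, dw_k\rangle = 0$ for $j\ne k$ should follow from the fact that distinct prime toric divisors contribute Novikov corrections indexed by essentially disjoint curve classes.

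The main obstacle is the bookkeeping that identifies the Novikov corrections of $\tS_j$, computed in \cite{GoIr-Se12} by equivariant localization on section moduli spaces, with the logarithmic derivative structure of the disc-counting series $f_k$ after the mirror map. One must carefully track the shift by the maximal disc class $\alpha_j$ coming from the degeneration, verify compatibility of the lift $\hS_j$ with the toric basis of $H^2(X,L)$, and invoke the semi-positivity hypothesis to rule out potentially destabilizing moduli strata so that only the advertised hypergeometric contributions survive. Once these compatibilities are in place, the conclusion $f_j = \exp g_0^{(j)}$ is a direct consequence of comparing coefficients in the resulting formal series identity.
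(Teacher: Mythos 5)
There is a genuine gap, and it is the central step. Your plan is to ``substitute the explicit formula for $\hS_j$ from \cite{GoIr-Se12}'' into $\langle \hS_j, dw_k\rangle = \delta_{jk} z_j$ and read off $f_j$. But \cite{GoIr-Se12} computes only the non-lifted Seidel elements $\tS_j \in H^2(X)\otimes K$ (Theorem \ref{thm:g0}: $\tS_j = e^{-g_0^{(j)}(y)}\tD_j$); it does not compute the lifts $\hS_j \in H^2(X,L)\otimes K$. The pairing $\langle \hS_j, dw_k\rangle$ genuinely depends on the lift: changing $\hS_j$ by $\delta\varphi$ with $\varphi\in H^1(L)$ changes the pairing by $\langle \varphi, b_k\rangle w_k \neq 0$, so knowledge of $\tS_j$ alone does not let you evaluate the left-hand side. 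Indeed, in this paper the explicit formula for $\hS_j$ (Theorem \ref{thm:equivSeidel}) is \emph{derived} by assuming Conjecture \ref{conj:CLLT} together with the degeneration formula, i.e.\ by assuming the very statement you are trying to prove; so as written your substitution step is circular (an independent $\T$-equivariant computation of $\hS_j$ would repair this, but it is not in \cite{GoIr-Se12} and is not part of your argument). A second, related omission: you treat the off-diagonal vanishing as something to be checked by a disjointness-of-curve-classes argument, whereas it is part of the hypothesis (Theorem \ref{thm:degeneration-toric}) and should simply be used; and you never explain how the diagonal identity alone, which is one scalar equation mixing the unknown $f_j$ with the unknown higher Novikov corrections of the lift, pins down $f_j$.

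The paper's actual route avoids the lifts entirely, and this is worth internalizing. One packages the identities into the $K$-module isomorphism $\frks\colon H^2(X,L)\otimes K \to \bigoplus_j K z_j$, $D_i\mapsto (z_i\partial w_k/\partial z_i)_k$; the degeneration formula says $\frks(\hS_j)=z_j$. A direct computation shows $\frks(\delta\varphi) = \bigoplus_j \langle\varphi,b_j\rangle w_j$, so $\frks$ descends to an injective map $\ks$ on $H^2(X)\otimes K$ with $\ks(\tS_j)=[z_j]$ --- the lift ambiguity is exactly absorbed by the relations in the quotient. Then $B_j := f_j(q)\tS_j$ satisfies $\ks(B_j)=[w_j]$, and the $[w_j]$ obey the linear relations $\sum_j\langle\varphi,b_j\rangle[w_j]=0$; combining this with the injectivity of $\ks$, with Proposition \ref{prop:vertex-vanishing} of Chan--Lau--Leung--Tseng ($f_j=1$ when $b_j$ is a vertex of the fan polytope), and with the uniqueness characterization of Batyrev elements (Theorem \ref{thm:GI2}), one concludes $B_j=\tD_j$, whence $f_j(q)=\exp(g_0^{(j)}(y))$ by Theorem \ref{thm:g0}. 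These last two ingredients --- the vertex normalization and the characterization theorem --- are what replace your coefficient-by-coefficient bookkeeping, and neither appears in your outline.
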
 

In \cite{GoIr-Se12}, we introduced \emph{Batyrev elements} 
$\tD_j$ as mirror analogues of the divisor classes $D_j$. 
They satisfy the relations of Batyrev's quantum ring \cite{Ba-Qu93} 
for toric varieties. 
The hypergeometric functions $g_0^{(j)}(y)$ originally 
appeared in our computation \cite{GoIr-Se12} 
as the difference between the Seidel and the Batyrev elements: 
\[
\tD_j = \exp\left(g_0^{(j)}(y)\right) \tS_j.  
\] 
Hence by Theorem \ref{thm:degformula-toric-introd},  
$\tS_j$ and $\tD_j$ correspond respectively to 
$[z_j]$ and $[w_j]$ under the Kodaira-Spencer mapping 
(see also \cite[Theorem 1.5]{CLLT12}). 

Finally we discuss briefly the method of 
Chan-Lau-Leung-Tseng \cite{CLLT12}. 
Their approach is different from ours 
but is closely related. 
They observed that a holomorphic disc in $(X,L)$ 
whose boundary class is $b_j\in H_1(L)$ can be completed 
to a holomorphic \emph{sphere} in the $M$-bundle 
$E'_j$ associated to the inverse $\C^\times$-action 
$\rho_j^{-1}$. 
Using this, they identified open Gromov-Witten invariants of $(X,L)$ 
with certain closed invariants of $E'_j$. 
The associated bundle $E'$ of the inverse action 
also appears in our story as the central fibre $E\cup_M E'$ 
of the degeneration of the closed manifold $M\times \P^1$ 
(instead of $M\times \D$) in \S \ref{ss:def}. 


\vspace{5pt} 
\noindent 
\textbf{Acknowledgments.}  We are grateful to
Kwokwai Chan, Siu-Cheong Lau, Naichung Conan Leung
and Hsian-Hua Tseng for their beautiful conjecture
\cite{CLLT11}, stimulating discussions and their
interests in our work.  We thank Kenji Fukaya and
Kaoru Ono for many helpful comments.  E.G. wants
to thank the mathematics departments at Kyoto
University and MIT for their hospitality while working and
revising this project. E.G. was supported by NSF grants
DMS-1104670 and DMS-1510518. H.I. is supported by 
JSPS KAKENHI Grant Number 22740042, 25400069, 
23224002.

\section{Preliminaries} 
\label{s:prelim}

In this section, we review a potential function of a 
Lagrangian submanifold and a Seidel element associated 
to a Hamiltonian circle action. 

\subsection{Potential function of a Lagrangian submanifold} 
\label{subsec:potential-general} 
The potential of a Lagrangian submanifold 
arises as the 0-th operation $\mm_0$ of the corresponding 
$A_\infty$-algebra in Lagrangian Floer theory of 
Fukaya-Oh-Ohta-Ono \cite{FuOh-La09}. 
In this paper, we do not use the full generality of 
$A_\infty$-formalism developed in \cite{FuOh-La09}; 
instead we consider potential functions 
under certain restrictive assumptions. 

Let $(M,\omega)$ be a closed symplectic manifold 
and $L$ be a Lagrangian submanifold. 
For simplicity, we restrict ourselves to the case where $M$ 
is a smooth projective variety. 
We assume that $L$ is oriented, relatively-spin 
and fix a relative spin structure \cite[Definition 8.1.2]{FuOh-La09} 
of $L$ so that the moduli space of bordered stable maps to 
$(M,L)$ has an oriented Kuranishi structure. 
Let $\mu \colon H_2(M,L) \to \Z$ denote the Maslov index. 
It takes values in $2\Z$ since $L$ is oriented. 

Let $\cM_1(\beta)$ denote the moduli space 
of stable holomorphic maps from a genus-zero 
bordered Riemann surface $(\Sigma, \partial \Sigma)$ to 
$(M,L)$ with one boundary marked point and 
in the class $\beta\in H_2(M,L)$. 
This was denoted by $\cM_1^{\rm main}(\beta)$ in \cite{FuOh-La09}.  
By \cite[Proposition 7.1.1]{FuOh-La09} (see also 
\cite[Theorem 15.3]{FOOO-technical12}), 
$\cM_1(\beta)$ is compact and equipped with an oriented 
Kuranishi structure (with boundary and corner) 
and has virtual dimension $n+ \mu(\beta) -2$, where $n= \dim_\R L$. 
Let $\ev \colon \cM_1(\beta) \to L$ denote the evaluation map. 
Define an open version of Novikov ring $\Lambda^{\rm op}$ to be 
the space of all formal power series 
\[
\sum_{\beta\in H_2(M,L)} c_\beta z^\beta
\]
with $c_\beta \in \Q$ such that 
\[
\sharp\left\{
\beta \,:\, c_\beta \neq 0, \ 
\textstyle \int_\beta \omega < E \right\} < \infty 
\]
holds for all $E\in \R$. 

\begin{definition} 
\label{def:W-general}
Assume that $\cM_1(\beta)$ is empty 
for all $\beta\in H_2(M,L)$ with $\mu(\beta)\le 0$. 
Then $\cM_1(\beta)$ with $\mu(\beta)=2$ has no boundary 
and carries a virtual fundamental \emph{cycle} 
of dimension $n=\dim_\R L$ \cite[Lemma A.1.32]{FuOh-La09}.
We define \emph{open Gromov-Witten invariants}  
$n_\beta\in \Q$ by 
\[
\ev_*[\cM_1(\beta)]^{\rm vir} = n_\beta [L] 
\]
for $\beta$ with $\mu(\beta)=2$, where $[L]\in H_n(L)$ is the 
fundamental class of $L$. 
The \emph{potential function} of $L$ is defined to be the 
formal sum: 
\[
W = \sum_{\beta \in H_2(M,L) : \mu(\beta)=2} 
n_\beta z^\beta.  
\]
This is an element of $\Lambda^{\rm op}$. 
\end{definition} 

We can decompose $W$ according to 
boundary classes of discs. 
\begin{definition} 
\label{def:W-gamma}
Under the same assumption as in Definition \ref{def:W-general}, 
we write 
\[
W = \sum_{\gamma \in H_1(L)} W_\gamma 
\]
with $W_\gamma \in \Lambda^{\rm op}$ given by 
\[
W_\gamma := \sum_{\beta \in H_2(M,L):\, 
\mu(\beta)=2, \partial \beta =\gamma} n_\beta z^\beta. 
\]
\end{definition} 

\begin{remark} 
The potential function does depend on the choice of a
complex structure on $M$ and this is a reason why we
restricted to a smooth projective variety $M$. For example,
the Hirzebruch surfaces $\F_0=\P^1\times\P^1$ and $\F_2$
together with their Lagrangian torus fibres are
symplectomorphic to each other, but the potential functions
are different. See Auroux \cite{Au-Mi07} for wall-crossing
of disc counting.  
\end{remark} 

\subsection{Seidel elements} 
\label{subsec:Seidel} 
Seidel element is an invertible element of quantum cohomology 
associated to a loop in the group $\Ham(M,\omega)$ 
of Hamiltonian diffeomorphisms of a symplectic manifold $(M,\omega)$. 
In this paper we restrict to the case where $M$ is a smooth projective variety 
equipped with an algebraic $\C^\times$-action. 
In this case, the associated $S^1$-action is Hamiltonian 
and yields a loop in $\Ham(M,\omega)$. 
We refer the reader to \cite{Se-pi97,LaMc-To99,Mc-Qu00} 
for the original definitions and to \cite{McTo-To06, Go-Qu06} 
for applications in symplectic topology. 

Let $M$ be a smooth projective variety, equipped with a 
$\C^\times$-action. 

\begin{definition}
\label{def:a-bundle} 
The \emph{associated bundle} of the
$\C^\times$-action on $M$ 
is the $M$-bundle over $\P^1$
\begin{equation*}
  E:=M \times (\C^{2}\setminus \{0\}) /
  \C^{\times}\to \P^1, 
\end{equation*}
where $\C^\times$ acts with the diagonal
action $\lambda\cdot (x, (z_1,z_2))=(\lambda x, (\lambda z_1, \lambda z_2))$.
\end{definition}

\begin{remark}\label{rk:ssidel}
In symplectic geometric terms, the associated
bundle is in fact a clutched bundle obtained by
gluing two trivial $M$-bundles over the unit disc, 
along the boundary, using the action. 
More precisely, 
\[ 
E=(M\times \D_0)\cup_g (M\times \D_\infty) 
\]
where $\D_0 = \{z\in \C \,:\, |z| \le 1\}$ and 
$\D_\infty = \{z\in \C\, :\, |z| \ge 1\} \cup \{\infty\}$ 
and the gluing map 
$g \colon M\times \partial \D_0 \to M\times \partial \D_\infty$ 
is given by 
\[
g(x,e^{i\theta}) = (e^{-i\theta} \cdot x, e^{i\theta}). 
\]
This construction can be generalized to 
a loop in the group of Hamiltonian diffeomorphims 
and yields a Hamiltonian bundle $E \to \P^1$ 
in general. 
One can equip a symplectic form $\omega_E$ 
on the total space $E$ of the Hamiltonian bundle 
such that $\omega_E$ restricts to the symplectic form 
$\omega_M$ on each fibre \cite{Se-pi97}. 
\end{remark}

By Atiyah's theorem \cite{At-Co82}, 
there exists a unique $\C^\times$-fixed component 
$F_{\rm max} \subset M^{\C^\times}$ 
such that the normal bundle of $F_{\rm max}$ 
has only negative $\C^\times$-weights.  
For a Hamiltonian function $H$ generating the $S^1$-action, 
$F_{\rm max}$ is the locus where $H$ takes the maximum value. 
Each fixed point $x\in M^{\C^\times}$ defines a section
$\sigma_x$ of $E$.  We denote by $\sigma_0$ the
section associated to a fixed point in $F_{\rm max}$.  
We call it a \emph{maximal section}. 
This defines a splitting\footnote
{The section $\sigma_0$ gives a splitting of the Serre 
spectral sequence. In general one has a non-canonical splitting 
$H^*(E;\Q) \cong H^*(M;\Q) \otimes H^*(\P^1;\Q)$ 
for any Hamiltonian bundle $E\to \P^1$ \cite{Mc-Qu00}.}
\begin{equation}
\label{eq:split_maximal}
H_2(E;\Z) \cong 
\Z [\sigma_0] \oplus H_2(M;\Z). 
\end{equation} 
Let $\NE(M)\subset H_2(M,\R)$ denote the Mori
cone, that is the cone generated by effective
curves and set $\NE(M)_\Z := \{d \in H_2(M;\Z) \,:\, 
d \in \NE(M)\}$.  
We introduce $\NE(E)$ and $\NE(E)_\Z$ similarly.
\begin{lemma}[{\cite[Lemma 2.2]{GoIr-Se12}}] 
$\NE(E)_\Z = \Z_{\ge 0} [\sigma_0] + \NE(M)_\Z$. 
\end{lemma}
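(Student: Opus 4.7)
My plan is to prove the two inclusions separately. The containment $\Z_{\ge 0}[\sigma_0] + \NE(M)_\Z \subseteq \NE(E)_\Z$ is immediate: $[\sigma_0]$ is represented by the holomorphic section $\sigma_0$, and the fibre inclusion $M \hookrightarrow E$ sends $\NE(M)_\Z$ into $\NE(E)_\Z$, so their $\Z_{\ge 0}$-span lies in the semigroup $\NE(E)_\Z$. All the content is in the reverse inclusion.

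For the reverse inclusion, write $\gamma \in \NE(E)_\Z$ as $\gamma = k[\sigma_0] + d$ via the splitting \eqref{eq:split_maximal}. Pairing with the nef class $\pi^\ast[\mathrm{pt}] \in H^2(E)$ gives $k = \pi^\ast[\mathrm{pt}] \cdot \gamma \geq 0$, so only $d \in \NE(M)_\Z$ remains to be checked. My approach is to use the $\C^\times$-action $\tau$ on $E$ induced by scaling one coordinate of $\C^2$, which rotates the $\P^1$-base and fixes one fibre of $\pi$ pointwise. Since each component of the Chow variety $\mathrm{Chow}_1(E,\gamma)$ is projective, the $\tau$-action on it extends across $\lambda=0,\infty$; hence any effective cycle of class $\gamma$ flat-degenerates to a $\tau$-invariant one of the same class. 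Consequently $\NE(E)_\Z$ is generated by classes of $\tau$-invariant irreducible curves, which fall into three types: (i) curves in the pointwise-fixed fibre of $\pi$; (ii) $\C^\times$-invariant curves inside the opposite fibre (either contained in some $F_\alpha$ or equal to a $\C^\times$-orbit closure in $M$); and (iii) $\tau$-equivariant sections $\sigma_\alpha$ of $\pi$ passing through a point of a fixed component $F_\alpha$ of $M$. Classes in (i) and (ii) lie in $\NE(M)_\Z$, so it remains to show that every $[\sigma_\alpha]$ lies in $[\sigma_0] + \NE(M)_\Z$.

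The hypothesis that the normal bundle of $F_{\rm max}$ has only negative $\C^\times$-weights says exactly that $F_{\rm max}$ is the sink of the $\C^\times$-flow on $M$ at $\lambda \to \infty$, and that every other $F_\alpha$ has at least one positive weight on $N_{F_\alpha/M}$, so its ascending Bia\l ynicki--Birula stratum $M^+(F_\alpha)$ strictly contains $F_\alpha$. Choosing $y \in M^+(F_\alpha) \setminus F_\alpha$, the orbit closure $P = \overline{\C^\times \cdot y} \subset M$ is a $\C^\times$-invariant $\P^1$ joining $F_\alpha$ to a strictly higher fixed component $F_{\alpha_1}$; iterating produces a chain of such $\P^1$'s terminating at $F_{\rm max}$. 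For each link $P$ joining a lower $F_\alpha$ and a higher $F_{\alpha'}$, the $\tau$-invariant sub-bundle $E_P := P \times_{\C^\times}(\C^2 \setminus \{0\}) \subset E$ is a Hirzebruch surface, and a direct weight computation at the two isolated $\tau$-fixed points in the non-fixed fibre of $E_P$ identifies the section $\sigma_{\alpha'}^{E_P}$ through the higher endpoint as the extremal (negative self-intersection) section. The standard relation on a Hirzebruch surface then gives $[\sigma_\alpha^{E_P}] - [\sigma_{\alpha'}^{E_P}] = n[P]$ with $n \geq 0$, and summing this telescoping relation along the chain yields $[\sigma_\alpha] - [\sigma_0] \in \NE(M)_\Z$.

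The main obstacle I expect is the degeneration step, i.e.\ making rigorous that every effective $1$-cycle class on $E$ is represented by a $\tau$-invariant effective cycle. The cleanest route uses properness of each irreducible component of $\mathrm{Chow}_1(E,\gamma)$ together with the classical fact that every $\C^\times$-orbit in a projective $\C^\times$-variety admits limits at $\lambda = 0, \infty$ in the $\tau$-fixed locus. Once this is granted, the remainder of the argument is essentially combinatorial: one classifies $\tau$-invariant irreducible curves in $E$ and reduces the identification of $[\sigma_\alpha]-[\sigma_0]$ to the Hirzebruch surface relation on each sub-bundle $E_P$, whose sign is forced by the $\tau$-weights at the isolated fixed points.
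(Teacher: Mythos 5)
You should note first that this paper does not actually prove the lemma: it is imported verbatim from \cite[Lemma 2.2]{GoIr-Se12}, so there is no in-text proof to compare against. Judged on its own, your argument is correct in outline and follows the natural (and, as far as the original reference goes, the expected) strategy: the easy inclusion, $k\ge 0$ by pairing with the nef fibre class $\pi^*[\mathrm{pt}]$, degeneration of any effective curve to a $\tau$-invariant cycle of the same class via properness of the Chow variety, classification of $\tau$-invariant irreducible curves into fibre curves and sections, and a Bia{\l}ynicki--Birula chain of invariant spheres comparing fixed-section classes with $[\sigma_0]$, the key sign being that the section through the endpoint with negative tangent weight (the higher one) is the extremal section of the sub-bundle. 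This reduces everything to $\NE(E)\subset \R_{\ge 0}[\sigma_0]+\NE(M)$ plus integrality of $d$, which is exactly what is needed.

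Two points deserve tightening, though neither breaks the argument. First, a $\tau$-invariant section need not be a ``horizontal'' section through a fixed point: over the open orbit of the base it is the constant section at an arbitrary point $x\in M$, and only its limit over one pole lands in a fixed component. Your telescoping argument, as written, starts from sections attached to fixed components, so for a general invariant section $\sigma_x$ you need one extra application of the same relation, inside the sub-bundle over the orbit closure $\overline{\C^\times\cdot x}$, to write $[\sigma_x]$ as a horizontal fixed section class plus a non-negative multiple of the orbit class; after that your chain argument applies verbatim. Second, the orbit closure $P=\overline{\C^\times\cdot y}$ can be singular (e.g.\ cuspidal) at its fixed endpoints, so $E_P$ need not literally be a Hirzebruch surface; the fix is standard --- pass to the $\C^\times$-equivariant normalization $\P^1\to P$, form the associated $\P^1$-bundle there, do the weight/self-intersection computation upstairs, and push the resulting class relation forward to $E$, where the fibre class pushes to $[P]$. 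Finally, the phrase ``$\NE(E)_\Z$ is generated by classes of $\tau$-invariant curves'' should be stated at the level of cones (elements of $\NE(E)_\Z$ need not themselves be effective), but your opening reduction --- $k\ge 0$ from the nef pairing and integrality of $d$ --- already supplies the correct bridge from the cone statement to the integral one.
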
 

Let $H_2^{\sec}(E;\Z)$ denote the affine 
subspace of $H_2(E;\Z)$ 
which consists of section classes, i.e.\ 
the classes that project to the positive generator 
of $H_2(\P^1;\Z)$. 
We set $\NE(E)^{\sec}_\Z := 
\NE(E)_\Z \cap H_2^{\sec}(E;\Z)$. 
The above lemma shows that 
\begin{equation}
\label{eq:split-sigma0}
\NE(E)^{\sec}_\Z=  [\sigma_0] + \NE(M)_\Z.  
\end{equation} 
For $d\in \NE(X)_\Z$ and $\sigma \in \NE(E)_\Z$, 
we denote by $q^d$ and $q^\sigma$ the corresponding 
elements in the group ring $\Q[\NE(X)_\Z]$ 
and $\Q[\NE(E)_\Z]$ respectively. 
We write: 
\[
q^\sigma = q_0^k  q^{d} \qquad \text{when} \quad 
\sigma = k [\sigma_0] + d 
\]
where $q_0 = q^{\sigma_0}$ is the variable
corresponding to the maximal section $\sigma_0$.
For $\sigma \in \NE(E)_\Z^{\rm sec}$, let
$\cM_{\rm S}(\sigma)$ denote the moduli space of
stable maps from genus-zero closed nodal Riemann
surfaces to $E$ in the class $\sigma$ with one
marked point whose image lies in a fixed fibre
$\iota\colon M\hookrightarrow  E$.  We can write
\[
\cM_{\rm S}(\sigma ) = \cM_1 (\sigma ) \times_E M 
\]
using the usual moduli space $\cM_1(\sigma)$ of
genus-zero one-pointed stable maps to $E$ in the
class $\sigma$.  Since $\cM_1(\sigma)$ has a
Kuranishi structure (without boundary) of virtual
real dimension $2n+2\lan c_1(E), \sigma\ran -2$
(with $n := \dim_\C M$) and we may assume that 
the evaluation map $\cM_1(\sigma) \to E$ is weakly
submersive (see \cite[Theorem 7.11]{Fukaya-Ono-Top99}), 
the fibre product $\cM_{\rm S}(\sigma)$ is equipped with 
the induced Kuranishi structure of virtual dimension:
\begin{equation} 
\label{eq:vdim-Seidel}
\vdim_\R  \cM_{\rm S}(\sigma) = 
2n + 2\lan c_1^{\rm vert}(E), \sigma \ran. 
\end{equation} 
Here $c_1^{\rm vert}(E)$ denotes the first Chern class 
of the vertical tangent bundle $T_{\rm vert} E$, 
\[
T_{\rm vert} E: = \Ker(d\pi \colon TE \to \pi^* T\P^1) 
\]
with $\pi \colon E\to \P^1$ the natural projection. 
(Note that $\lan c_1(E),\sigma \ran= \lan c_1^{\rm vert}(E), 
\sigma \ran +2$.)  
Let $\ev \colon \cM_{\rm S}(\sigma) \to M$ be the 
evaluation map 
and let $[\cM_{\rm S}(\sigma)]^{\rm vir}$ be  
the virtual fundamental cycle of $\cM_{\rm S}(\sigma)$. 

\begin{definition}\label{def:seidel}
The \emph{Seidel element} associated to the $\C^\times$-action 
on $M$ is the class
\begin{equation}\label{eq:seidel}
S:=\sum_{\sigma \in \NE(E)_\Z^{\sec}} 
\PD \left(  
\ev_*[\cM_S(\sigma)]^{\rm vir} \right) 
q^{\sigma} 
\end{equation}
in $H^*(M;\Q) \otimes \Q[\![\NE(E)_\Z]\!]$.  
Here $\PD$ stands for the Poincar\'{e} duality 
isomorphism. 
By \eqref{eq:split-sigma0}, we can factorize $S$ as 
$S = q_0 \tS$ with $\tS$ in the 
small quantum cohomology ring 
\[
QH(M) := H(M;\Q) \otimes \Q[\![\NE(M)_\Z]\!]
\]
and $q_0 := q^{\sigma_0}$ as above. 
Then $\tS$ is an invertible element of
$QH(M)[q^{-d}\!:\!d\in \NE(M)_\Z]$: 
the Seidel element $\tS'$ associated with the reverse $\C^\times$-action 
satisfies $\tS'\star \tS = q^{d_0}$ for some $d_0\in H_2(M,\Z)$  
\cite{Se-pi97,LaMc-To99,Mc-Qu00}.  
\end{definition}

\begin{remark} 
Using genus zero one-point Gromov-Witten invariants 
for $E$, we can write 
\[
S = \sum_{\sigma\in \NE(E)^{\rm sec}_\Z} 
\sum_{i} 
\lan \iota_* \phi_i \ran_{0,1,\sigma}^E \phi^i q^\sigma 
\]
where $\{\phi_i\}$ is a basis of $H^*(M;\Q)$,   
$\{\phi^i\}$ is the dual basis 
with respect to the Poincar\'{e} pairing 
and $\iota \colon M\to E$ is the inclusion of a fibre. 
(We followed the standard notation of Gromov-Witten 
invariants as in \cite{CoKa-Mi99}.) 
\end{remark} 

\begin{remark} 
\label{rem:Seidelrep} 
For a general symplectic manifold $M$, 
we use the Novikov ring $\Lambda$ 
\[
\Lambda := \left\{\textstyle\sum_{d \in H_2(M;\Z)} c_d q^d\, :\, 
c_d \in \Q, \ \sharp \{ d \,:\, c_d \neq 0, \ \lan \omega, d \ran \le E\} <\infty 
\text{ for all } E \in \R \right\}. 
\]
instead of $\Q[\![\NE(M)_\Z]\!]$. 
The Seidel elements associated to loops in $\Ham(M,\omega)$  
define a group homomorphism 
\cite{Se-pi97,LaMc-To99,Mc-Qu00}: 
\[
\pi_1(\Ham(M,\omega)) \to QH(M)_\Lambda^\times 
\Bigr/ \left \{q^{d} \,:\, d \in H_2(M;\Z) \right \} 
\] 
which is called the \emph{Seidel representation}, 
where $QH(M)_\Lambda = H^*(M;\Q) \otimes \Lambda$ 
denotes the quantum cohomology ring 
over $\Lambda$. 
\end{remark}

\section{Degeneration Formula}
Let $M$ be a smooth projective variety equipped with a 
$\C^\times$-action. We take an $S^1$-invariant 
K\"{a}hler form $\omega$ on $M$. 
Let $L$ be a Lagrangian submanifold 
of $M$ which is preserved by $S^1\subset \C^\times$, i.e.\ 
$\lambda L \subset L$ for $\lambda \in S^1$. 
Instead of counting holomorphic discs in $(M,L)$,
we shall consider the problem of counting
holomorphic \emph{disc sections} of the bundle
$M\times \D \to \D$ with boundary in $L\times
S^1$.  Then we degenerate the target $M\times \D$
into the union of the associated bundle $E$ and
$M\times \D$.  From this we expect a certain
relationship between Seidel elements and disc
counting invariants.  We assume that $M$ is a
smooth projective variety with a
$\C^\times$-action for simplicity, but the
degeneration formula in this section makes sense
for a symplectic manifold with a Hamiltonian
circle action (or a loop in the group of
Hamiltonian diffeomorphisms) in general.

\subsection{Degeneration of $M\times \D$} 
\label{ss:def} 
Let $\D$ denote the unit disc $\{z\in \C\,:\,
|z|\le 1\}$.  A degeneration of the disc $\D$ into
the union $\D \cup \P^1$ is given by the blowup
$\Bl_{(0,0)}(\D\times \C)$ of $\D \times \C$ at
the origin.  The projection $\pi\colon \Bl_{(0,0)}
(\D \times \C) \to \C$ satisfies $\pi^{-1}(t)
\cong \D$ for $t\neq 0$ and $\pi^{-1}(0) \cong \D
\cup \P^1$.  Explicitly:
\[
\Bl_{(0,0)} (\D \times \C) 
= \left \{ (z, t, [\alpha,\beta] )\in \D \times \C\times 
\P^1 \, :\, 
z \beta - t \alpha =0 \right\}.  
\]
An $M$-bundle $\cE$ over $\Bl_{(0,0)}(\D\times \C)$ 
is defined as follows. 
\[
\cE:= \left\{(x, z, t, (\alpha,\beta)) \in 
M \times \D \times \C \times (\C^2\setminus \{0\}) \,:\, 
z\beta - t\alpha =0\right\}\big/ \C^\times 
\]
where $\C^\times$ acts as 
$(x,z,t,(\alpha,\beta)) \mapsto 
(\lambda x, z, t, (\lambda \alpha, \lambda \beta))$.    
We have a natural projection $\pi\colon \cE \to \C$. 
One can see that 
\begin{equation} 
\label{eq:degeneration-fibre} 
\cE_t = \pi^{-1}(t) = 
\begin{cases} 
M \times \D  & \text{if } t\neq 0; \\
E \cup_{M} (M\times \D) & \text{if } t=0 
\end{cases} 
\end{equation} 
where $E$ is the associated bundle (Definition
\ref{def:a-bundle}) of the $\C^\times$-action on
$M$.  One can also construct $\cE$ as a symplectic
quotient:
\[
\cE = \left\{ (x,z,t,(\alpha,\beta)) \, :\, 
z\beta - t\alpha =0, \ H(x) + |\alpha|^2+|\beta|^2= c 
\right \} 
\big/S^1 
\]
where $H\colon M\to \R$ is the moment map of 
the $S^1$-action and $c>\max_{x\in M} H(x)$ is a real number. 
We can equip $\cE$ with a symplectic structure. 
The boundary $\partial \cE_t$ can be identified with 
$M\times S^1$ via the map:
\begin{equation} 
\label{eq:bdry-trivialization} 
M\times S^1 \ni (x,z) \mapsto [x,z,t,(z,t)] \in \partial\cE_t. 
\end{equation} 
Via this identification, $\cE_t$ contains 
a Lagrangian submanifold $\hL_t := L\times S^1$ 
in the boundary $M\times S^1 \cong \partial \cE_t$. 

We can close $\cE_t$ by attaching $M\times
\D$ to the boundary for each $t$ and get a
degenerating family $\ocE$ of closed manifolds.
More explicitly, we define:
\[
\ocE = \left\{(x, (z,w), t, (\alpha,\beta)) \in
  M\times (\C^2\setminus \{0\}) \times \C \times
  (\C^2 \setminus \{0\}) \,:\, t \alpha w = z
  \beta\right\}\big/\C^\times \times \C^\times
\]
where $\C^\times \times \C^\times$ acts as 
\[
(x,(z,w),t,(\alpha,\beta)) \mapsto (\lambda_1^{-1}
\lambda_2 x, (\lambda_1z, \lambda_1 w), t,
(\lambda_2 \alpha, \lambda_2 \beta) ).
\]
This is an $M$-bundle over 
\[
  \Bl_{(0,0)} (\P^1\times \C) = \left\{([z,w], t,
  [\alpha,\beta]) \in \P^1 \times \C \times \P^1
  \,:\, t \alpha w = z\beta \right \}.
\]
With respect to the projection $\pi \colon \ocE
\to \C$ to the $t$-plane, we have
\[
\ocE_t = \pi^{-1}(t) = 
\begin{cases} 
M\times \P^1 & \text{if } t\neq 0; \\
E \cup_{M} E' & \text{if } t=0. 
\end{cases} 
\]
where $E'$ is the associated bundle of the
$\C^\times$-action on $M$ \emph{inverse} to the
original one.  Note that $\cE$ is contained in
$\ocE$ as the locus $\{w=1, \, |z|\le 1\}$ and
$\ocE = \cE \cup_{M\times S^1\times \C} (M\times
\D^2 \times \C)$.  We can also equip $\ocE$ with a
symplectic structure by describing it as a
symplectic quotient in a similar manner.

A topological description is given as follows. We
start from a trivial $M$-bundle $M\times \P^1$
over $\P^1$.  We cut $\P^1$ into 3 pieces: $\P^1=
\D_0 \cup A \cup \D_\infty$, where $\D_0 =
\{|z|\le 1/2\}$, $A=\{1/2\le |z|\le 2\}$ and
$\D_\infty = \{|z|\ge 2\} \cup \{\infty\}$.  One
can twist the clutching function along $\partial
\D_0$ and $\partial \D_\infty$ by the given
$S^1$-action on $M$; namely
\begin{equation} 
\label{eq:clutchingtwist}
M\times \P^1 = (M\times \D_0) \cup_{g_1} (M\times
A) \cup_{g_2} (M\times \D_\infty)  
\end{equation} 
where the clutching functions $g_1$, $g_2$ are
given respectively by
\begin{align*} 
  & g_1 \colon M\times \partial \D_0 \ni (x,
  \tfrac{1}{2}e^{i\theta}) \longmapsto
  (e^{-i\theta} x, \tfrac{1}{2} e^{i\theta}) \in M
  \times \partial_0A
  \\
  & g_2 \colon M\times \partial_\infty A \ni (x, 2
  e^{i\theta}) \longmapsto (e^{i\theta} x, 2
  e^{i\theta} ) \in M\times \partial \D_\infty
\end{align*} 
where we set $\partial A =\partial_0 A \cup \partial_\infty A$. 
Collapsing $M\times S^1 \subset M\times A$ 
down to $M$, we get the singular central fibre $E \cup_M E'$. 
In fact, for $|t|<1$, one can decompose $\ocE_t$ as 
\begin{align*} 
\ocE_t = & \left\{[x,(tz,1), t, (z,1)]\,:\, |z|\le 1\right\} \\
& \cup \left\{
[x,(z,1),t,(1,\beta)]\,:\, t  = \beta z, \ 
 |\beta|\le 1, \ |z|\le 1\right\} \\
& \cup \left\{[x,(1,w),t,(1,wt)]\,:\, |w|\le 1
\right\}. 
\end{align*} 
This corresponds to the decomposition 
\eqref{eq:clutchingtwist} of $M\times \P^1$ above. 

\begin{remark} 
\label{rem:closing}
We shall consider stable holomorphic discs in
$(\ocE_t, \hL_t)$ which project onto the
holomorphic disc $(\D^2, S^1) \subset (\P^1,S^1)$.
Such stable holomorphic discs are entirely
contained in the half-space $\cE_t$ of $\ocE_t$,
so the choice of ``closing" of $\cE_t$ is not 
relevant. 
\end{remark} 

\begin{remark} 
We can perform a similar construction for a general symplectic 
manifold $(M,\omega)$ equipped with a Lagrangian submanifold $L$ 
and a loop $\{\phi_\theta\}_{\theta \in [0,2\pi]}$ 
in the group $\Ham(M,\omega)$ of Hamiltonian diffeomorphisms  
such that $\phi_\theta(L) = L$ for all $\theta$. 
We can twist the clutching function of the trivial 
$M$-bundle $M\times \P^1$ as in \eqref{eq:clutchingtwist}
where $g_1$, $g_2$ there are replaced with 
\[
g_1(x, \tfrac{1}{2} e^{i\theta}) = (\phi_{-\theta}(x), \tfrac{1}{2} e^{i\theta}), 
\quad 
g_2(x, 2 e^{i\theta}) = (\phi_{\theta} (x), 2 e^{i\theta}). 
\]
Then we can degenerate the annulus $A$ into the union of two discs 
(in a one-parameter family) in the middle part $M\times A$.  
In the degeneration family, we have a family of Lagrangian 
submanifolds $L\times S^1$ 
lying in the boundary of $M\times \D_0 \cup_{g_1} M\times A$.  
\end{remark}

\subsection{Relative homology classes of degenerating discs} 
\label{subsec:relativehomology} 
We write $\cL=\bigcup_{t\in \C} \hL_t$. 
The total space $(\cE, \cL)$ of the family has a deformation retraction 
to the central fibre $(\cE_0,\hL_0)$. 
This gives a retraction map for $t\neq 0$: 
\[
r\colon H_2(\cE_t, \hL_t) \longrightarrow 
H_2(\cE, \cL)  \cong H_2(\cE_0, \hL_0). 
\] 
Let $\pi \colon \cE \to \Bl_{(0,0)}(\D\times \C)$ denote the 
natural projection. 
We have the following commutative diagram: 
\[
\begin{CD} 
H_2(\cE_t, \hL_t) @>{r}>> H_2(\cE_0,\hL_0) \\ 
@V{\pi_*}VV  @V{\pi_*}VV \\ 
H_2(\D, S^1) @>{r}>> H_2(\P^1 \cup \D, S^1).
\end{CD} 
\]
Under the natural identifications $H_2(\D,S^1;\Z)
\cong \Z$ and $H_2(\P^1 \cup \D, S^1;\Z) \cong
H_2(\P^1;\Z) \oplus H_2(\D,S^1;\Z) \cong \Z^2$,
the bottom arrow is given by $n\mapsto (n,n)$.  We
are interested in \emph{section classes} lying in
the following groups:
\[
H_2^{\rm sec}(\cE_t, \hL_t) = \pi_*^{-1}(1),
\text{ for } t\neq 0, \text{ and } 
H_2^{\rm sec}(\cE_0, \hL_0) = \pi_*^{-1}(1,1).
\] 
There is an induced retraction map $r\colon
H_2^{\rm sec}(\cE_t,\hL_t)  
\to H_2^{\rm sec}(\cE_0,\hL_0)$ for $t\neq 0$. 
\begin{lemma} 
\label{lem:relativehomology} 
Assume that $M$ is simply connected and $L$ is connected. 
Then we have 
\begin{align}
\label{eq:relativehomology} 
\begin{split}  
& H_2^{\rm sec}(\cE_t,\hL_t) \cong H_2(M,L) \quad 
\text{for } t\neq 0 \\
& H_2^{\rm sec}(\cE_0,\hL_0) \cong H_2^{\rm sec}(E)\times_{H_2(M)} H_2(M,L)
\end{split} 
\end{align} 
\end{lemma}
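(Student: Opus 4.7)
My plan is to prove both parts using the long exact sequence of a pair together with relative Mayer--Vietoris, with the hypotheses ``$M$ simply connected'' and ``$L$ connected'' ensuring that the relevant low-dimensional groups vanish so the sequences truncate cleanly.

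For part (a), I would construct an explicit isomorphism. Pick $x_0 \in L$ and $z_0 \in S^1$. The constant disc $s_0: z \mapsto (x_0, z)$ gives a distinguished section class $[s_0] \in H_2^{\rm sec}(\cE_t,\hL_t)$, and the fibre inclusion $j: (M, L) \hookrightarrow (M\times\D, L\times S^1)$ over $z_0$ induces $j_*: H_2(M, L) \to H_2(\cE_t,\hL_t)$ with image in $\ker \pi_*$. The candidate map is $\Phi(\alpha) = [s_0] + j_*\alpha$, which lands in $\pi_*^{-1}(1)=H_2^{\rm sec}(\cE_t,\hL_t)$. To verify that $\Phi$ is a bijection, I compare the long exact sequences of $(M,L)$ and $(M\times\D, L\times S^1)$. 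Using K\"unneth $H_k(L\times S^1) \cong H_k(L)\oplus H_{k-1}(L)$, contractibility of $\D$, $H_1(M) = 0$, and $H_0(L) = \Z$, both sequences truncate to short exact sequences
\[
0 \to H_2(M)/i_*H_2(L) \to H_2(M,L) \to H_1(L) \to 0,
\]
\[
0 \to H_2(M)/i_*H_2(L) \to H_2(M\times\D, L\times S^1) \to H_1(L)\oplus \Z \to 0,
\]
where the extra $\Z$-summand (coming from $H_0(L)\otimes H_1(S^1)$ under K\"unneth) is detected by $\pi_*$. A five-lemma diagram chase shows $j_*$ is injective with image $\ker\pi_*$, hence $\Phi$ is a bijection onto $\pi_*^{-1}(1)$.

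For part (b), I would apply the relative Mayer--Vietoris sequence to the decomposition $\cE_0 = E \cup_M (M\times\D)$ of the pair $(\cE_0, \hL_0)$, noting that $\hL_0 \cap E = \emptyset$ and $\hL_0 \cap (M\times\D) = \hL_0$. Using $H_1(M) = 0$, this yields
\[
H_2(M) \xrightarrow{(i_{E*},\, -i_{B*})} H_2(E) \oplus H_2(M\times\D, L\times S^1) \to H_2(\cE_0, \hL_0) \to 0,
\]
where $i_E: M\hookrightarrow E$ and $i_B: M\hookrightarrow M\times\D$ are fibre inclusions. Restricting to section classes (the preimage of $(1,1)$ under the projection to $H_2(\P^1)\oplus H_2(\D, S^1)$) identifies $H_2^{\rm sec}(\cE_0, \hL_0)$ with the quotient of $H_2^{\rm sec}(E) \times H_2^{\rm sec}(M\times\D, L\times S^1)$ by the diagonal action of $H_2(M)$. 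Substituting the identification from part (a) and observing that $i_{B*}$ factors as the natural map $H_2(M) \to H_2(M, L)$ (since $\D$ is contractible and $i_{B*}(H_2(M))$ lies entirely in $\ker\pi_*$), this quotient is precisely the fibre product $H_2^{\rm sec}(E) \times_{H_2(M)} H_2(M, L)$ as stated.

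The main technical point is the bookkeeping in part (b): one needs to check that the relative Mayer--Vietoris boundary to $H_1(M)$ vanishes, identify $i_{B*}$ with the natural map $H_2(M) \to H_2(M, L)$, and verify that the quotient by $H_2(M)$ coming from the sequence matches the diagonal action defining the fibre product under the splittings $H_2^{\rm sec}(E) = [\sigma_0] + H_2(M)$ and $H_2^{\rm sec}(M\times\D, L\times S^1) \cong H_2(M,L)$. Part (a) is essentially a five-lemma argument; part (b) is a Mayer--Vietoris computation together with these compatibility checks. Neither step is deep, but care with sign conventions in relative Mayer--Vietoris is needed.
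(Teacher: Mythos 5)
Your proposal is correct and essentially follows the paper's own argument: for $t=0$ you use the identical relative Mayer--Vietoris sequence for $\cE_0 = E\cup_M(M\times \D)$ with $H_1(M)=0$, and for $t\neq 0$ your five-lemma comparison of the long exact sequences of $(M,L)$ and $(M\times\D, L\times S^1)$ (using K\"unneth for $H_*(L\times S^1)$ and connectedness of $L$) is just a repackaging of the paper's proof that $(p_{1*},p_{2*})$ is an isomorphism, with your explicit base point $[s_0]+j_*\alpha$ matching the paper's identification via $p_{1*}$. No gaps.
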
 
\begin{proof} 
Recall that $(\cE_t,\hL_t) \cong (M\times \D, L \times S^1)$ 
for $t\neq 0$. 
We show the isomorphism: 
\[
(p_{1*}, p_{2*}) \colon 
H_2(M\times \D, L\times S^1) \cong 
H_2(M,L) \times H_2(\D, S^1)
\]
where $p_1, p_2$ are natural projections. 
Because we have sections $i_1, i_2 \colon (M,L) \to (M\times \D, L\times S^1)$
such that $p_1 \circ i_1 = \id$, $p_2 \circ i_2=\id$, 
$p_2\circ i_1 = \text{const}$ 
and $p_1 \circ i_2 = \text{const}$, the map $(p_{1*}, p_{2*})$ 
is surjective.  To show that it is injective, we use the 
commutative diagram: 
\[
\begin{CD} 
0 @>>> 0 @>>> H_2(\D,S^1) @>>> H_1(S^1) \\
@AAA @AAA @A{p_{2*}}AA @AAA \\ 
H_2(L\times S^1) @>>> H_2(M\times \D) 
@>>> H_2(M\times \D, L\times S^1) 
@>>> H_1(L\times S^1) \\ 
@V{\text{epi}}VV @V{\cong}VV @V{p_{1*}}VV @VVV \\ 
H_2(L) @>>> H_2(M) @>>> H_2(M,L) @>>> H_1(L).  
\end{CD} 
\]
Here all the horizontal sequences are exact. 
The injectivity of $(p_{1*}, p_{2*})$ follows from the 
diagram chasing and $H_1(L\times S^1) 
\cong H_1(L) \oplus H_1(S^1)$ (here we use the 
condition that $L$ is connected). 
Then $H_2^{\rm sec}(\cE_t, \hL_t) \cong H_2(M,L)$ for $t\neq 0$
follows. 
 
The Mayer-Vietoris exact sequence for $\cE_0 = E \cup_M (M\times \D)$ 
gives 
\[
\begin{CD} 
H_2(M) @>>> H_2(E) \oplus H_2(M\times \D, L\times S^1) 
@>>> H_2(\cE_0,\hL_0) 
@>>> 0. 
\end{CD} 
\]
Here we used $H_1(M) =0$. 
The formula for $H_2^{\rm sec}(\cE_0, \hL_0)$ follows. 
\end{proof} 

Henceforth we assume that \emph{$L$ is connected 
and $M$ is simply-connected}.  
\begin{remark} 
\label{rem:closing-homology} 
The natural map $H_2(\cE_t,\hL_t) \to H_2(\ocE_t,\hL_t)$ is 
injective because the composition: 
\[
H_2(M\times \D, L\times S^1) \to H_2(M\times \P^1, L\times S^1) 
\xrightarrow{(p_{1*}, p_{2*})} H_2(M,L) \oplus H_2(\P^1,S^1) 
\]
is injective. 
\end{remark} 
\begin{notation} 
\label{nota:discsection} 
We denote by 
\begin{align*} 
  \hbeta & \in H_2^{\rm sec}(\cE_t,\hL_t) \cong
  H_2^{\rm sec}(M\times \D,
  L\times S^1) \quad (t\neq 0) \\
  \sigma + \hbeta & \in H_2^{\rm sec}(\cE_0,\hL_0)
\end{align*} 
the homology classes corresponding to $\beta \in
H_2(M,L)$ and to $[\sigma, \beta] \in H_2^{\rm
  sec}(E) \times_{H_2(M)}H_2(M,L)$ respectively,
under the isomorphism \eqref{eq:relativehomology}
in Lemma \ref{lem:relativehomology}.
\end{notation} 

Let $u\colon \D \to M$ be a map such that
$u(e^{i\theta}) = e^{i\theta}\cdot u(1)$, namely,
$u$ is a disc contracting an $S^1$-orbit in $M$.
This defines a section $\sigma(u)$ of the
associated bundle $E \to \P^1$:
\begin{alignat*}{7} 
  & \sigma(u) |_{\D_0} &\colon& \D_0 \to
  E|_{\D_0} &\cong& M\times \D_0, \quad
  & & z \mapsto (u(1),z) \\
  & \sigma(u)|_{\D_\infty} &\colon& \D_\infty \to
  E|_{\D_\infty} &\cong& M\times \D_\infty,\quad & &
  z\mapsto (u(z^{-1}),z)
\end{alignat*} 
where $\D_0= \{z\in \C\,:\, |z|\le 1\}$ and
$\D_\infty = \{ z\in \C\, : \, |z|\ge 1\} \cup
\{\infty\}$; here we used the gluing construction
of $E$ in Remark \ref{rk:ssidel}.

Recall the maximal section class $\sigma_0$ of $E$
in \S \ref{subsec:Seidel}.  We introduce a similar
\emph{maximal disc class} $\alpha_0 \in H_2(M,L)$
as follows. Take a path $\gamma \colon [0,1] \to
M$ such that $\gamma(0) \in F_{\rm max}$ and
$\gamma(1) \in L$, where $F_{\rm max}$ is the
maximal fixed component.  We define $\alpha_0$ to
be the class represented by the disc $\D \ni r
e^{i\theta} \mapsto e^{-i \theta}\cdot \gamma(r)
\in M$.  The homotopy class here is independent of
the choice of a path $\gamma$ because $M$ is
simply-connected and $L$ is connected.  The
boundary of $\alpha_0$ is an inverse $S^1$-orbit
on $L$.

\begin{proposition}
\label{prop:retractionmap} 
The retraction map $r\colon H_2^{\rm sec}(\cE_t,\hL_t)\to 
H_2^{\rm sec}(\cE_0,\hL_0)$ (for $t\neq 0$) 
of section classes is an isomorphism.   
It is given by (under Notation \ref{nota:discsection})   
\begin{align*} 
r(\hbeta) = \sigma(u) -\hat{u} + \hbeta 
= \sigma_0 + \halpha_0 + \hbeta  
\qquad 
\text{for }\beta \in H_2(M,L) 
\end{align*} 
where $u\colon \D \to M$ is an 
arbitrary disc whose boundary is an $S^1$-orbit in $L$,  
$\sigma_0$ is the maximal section and $\alpha_0$ 
is the maximal disc. 
In particular we have the commutative diagram
\[
\begin{CD} 
  H_2^{\rm sec}(\cE_t, \hL_t) @>{\cong}>{r}>
  H_2^{\rm sec}(\cE_0,\hL_0) \\ 
  @V{\partial}VV  @V{\partial}VV \\
  H_1(L) @>{\cong}>{-\lambda}> H_1(L)
\end{CD}
\]
where the bottom map is the subtraction of the
class $\lambda=[\partial u]$ of an $S^1$-orbit on
$L$.
\end{proposition}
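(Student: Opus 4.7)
The plan is to exploit the affine structure given by Lemma~\ref{lem:relativehomology}: both $H_2^{\rm sec}(\cE_t,\hL_t)$ and $H_2^{\rm sec}(\cE_0,\hL_0)$ are torsors over $H_2(M,L)$, so any affine map between them is determined by its linear part and its value on a single class. The retraction $r$ is the restriction, to section classes, of a group homomorphism $H_2(\cE_t,\hL_t) \to H_2(\cE_0,\hL_0)$ obtained by composing the inclusion $(\cE_t,\hL_t) \hookrightarrow (\cE,\cL)$ with the deformation retraction of $(\cE,\cL)$ onto the central fibre $(\cE_0,\hL_0)$ (contracting $t \to 0$); this makes $r$ affine and hence an isomorphism of $H_2(M,L)$-torsors. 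To show its linear part on $H_2(M,L)$ is the identity, I represent a direction vector $(\beta,0) \in H_2(M,L) \oplus H_2(\D,S^1)$ by a disc of class $\beta$ placed in a boundary fibre $M \times \{z_0\}$ with $z_0 \in S^1 = \partial\D$. Such a cycle lies entirely inside $\partial \cE_t = M \times S^1$, which is canonically trivialized across the family by~\eqref{eq:bdry-trivialization}, and is therefore preserved verbatim by the degeneration, contributing the class $(0,\beta,0)$ in the Mayer-Vietoris coordinates of $H_2(\cE_0,\hL_0)$.

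I would then compute $r$ on the single class $\hat 0 \in H_2^{\rm sec}(\cE_t,\hL_t)$, represented by the constant disc section $z \mapsto (x_0,z) \in M \times \D \cong \cE_t$ for any fixed $x_0 \in L$. In the ambient coordinates of $\cE$ this is the 2-cycle $\{[x_0,z,t,(z,t)] : z \in \D\}$. Taking closure as $t \to 0$ yields two regimes. For $z \neq 0$ fixed, rescaling by $\lambda = 1/z$ gives the limit $[z^{-1}\cdot x_0, z, 0, (1,0)]$, producing the disc $z \mapsto (z^{-1}\cdot x_0, z) \in M \times \D$ on the $\D$-part of $\cE_0$. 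For any sequence $(z_n,t_n) \to (0,0)$ with real-positive unit rescaling $\mu_n = 1/\|(z_n,t_n)\| \to +\infty$ and $\mu_n(z_n,t_n) \to (\alpha,\beta)$, the limit is $[p_\infty(x_0), 0, 0, (\alpha,\beta)] \in E$, where $p_\infty(x_0) := \lim_{|\lambda| \to \infty} \lambda \cdot x_0 \in F_{\max}$ is the $\C^\times$-attracting limit. The $E$-part is therefore the constant section at $p_\infty(x_0) \in F_{\max}$, which is homologous to $\sigma_0$ (assuming $F_{\max}$ is connected), and the $\D$-part is the disc $re^{i\theta} \mapsto e^{-i\theta}\cdot(r^{-1}\cdot x_0)$, which by construction represents $\alpha_0$ with path $\gamma(r) = r^{-1}\cdot x_0$ from $F_{\max}$ to $x_0 \in L$. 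The two components match at the node since $\lim_{z \to 0} z^{-1}\cdot x_0 = p_\infty(x_0)$. This gives $r(\hat 0) = \sigma_0 + \halpha_0 + \hat 0$, and combining with the identity linear part yields $r(\hbeta) = \sigma_0 + \halpha_0 + \hbeta$.

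The equivalent form $r(\hbeta) = \sigma(u) - \hat u + \hbeta$ reduces to the identity $\sigma(u) - \sigma_0 = \hat u + \halpha_0$ in $H_2(\cE_0,\hL_0)$, understood via the Mayer-Vietoris identification $(c,0) \sim (0, \iota_*c)$ for $c \in H_2(M) \subset H_2(E)$; this holds because $\sigma(u) - \sigma_0 \in H_2(M)$ is represented by the 2-sphere obtained by gluing $u$ to $\alpha_0$ along their opposite $S^1$-orbit boundaries. The commutative diagram with $\partial$ is then immediate from $\partial \alpha_0 = -\lambda$. The hardest step is the rigorous limit computation of the 3-chain on $\Bl_{(0,0)}(\D\times\C)$ as $t \to 0$: one has to verify that its set-theoretic closure in $\cE$ yields a valid 2-cycle in $\cE_0$ whose $E$- and $\D$-part components fit together at the node with the claimed homology classes, either by explicit chain-level bookkeeping on the blowup or via an excision argument on the pair $(\cE,\cE_0)$.
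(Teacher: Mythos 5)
Your overall strategy (torsor structure plus an explicit computation of $r$ on the trivial section class by degenerating the actual holomorphic constant sections $u_t(z)=[x_0,z,t,(z,t)]$) is viable and is essentially the paper's Example in \S 3.2.1 promoted to a proof; the paper itself argues differently, purely topologically: using the clutching description of \S 3.1 it observes that the constant section degenerates (up to homotopy) to $\sigma(u)\cup\tilde u$ for an \emph{arbitrary} disc $u$ contracting an $S^1$-orbit, giving $r(\hzero)=\sigma(u)-\hu$ first, and only then specializes $u$ to the rotation of a path ending on $F_{\rm max}$ to get $\sigma_0+\halpha_0$. This soft argument buys two things your route does not: it avoids the chain-level verification of the Gromov/closure limit of the $3$-chain (which you yourself flag as the "hardest step" and leave open), and it never needs to know where the flow limit of a particular point of $L$ lands.

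That second point is where your proposal has a genuine gap. You identify the bubble component with "the constant section at $p_\infty(x_0):=\lim_{|\lambda|\to\infty}\lambda\cdot x_0\in F_{\rm max}$, which is homologous to $\sigma_0$". But for an arbitrary $S^1$-invariant Lagrangian there is no reason that $p_\infty(x_0)$ lies in $F_{\rm max}$: this holds only when $x_0$ lies in the open Bialynicki--Birula cell attracted to $F_{\rm max}$, which is generic but not guaranteed (the connectedness of $F_{\rm max}$, which you invoke parenthetically, is automatic and is not the issue; what fails is that constant sections at fixed points in \emph{different} fixed components are not homologous in $H_2(E)$). The correct statement, and the repair, is that this identification is not needed: writing $x_1=p_\infty(x_0)$ and $\alpha'$ for the class of the disc $re^{i\theta}\mapsto e^{-i\theta}\cdot(r^{-1}\cdot x_0)$, one has $\sigma_{x_1}+\widehat{\alpha'}=\sigma_0+\halpha_0$ in $H_2^{\rm sec}(E)\times_{H_2(M)}H_2(M,L)$, because $\sigma_{x_1}-\sigma_0=\iota_*c$ and $\alpha'-\alpha_0=j_*c$ (up to sign) for the sphere class $c\in H_2(M)$ swept out by rotating a path from $x_1$ to $F_{\rm max}$, and such differences are exactly what the amalgamation kills. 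You already use precisely this kind of Mayer--Vietoris matching when you reduce $\sigma(u)-\hu+\hbeta$ to $\sigma_0+\halpha_0+\hbeta$, so the fix is a one-step addition; as written, however, the key identification of the limit class with $\sigma_0+\halpha_0$ rests on an unproved (and in general false) claim. The remaining ingredients — the affine/torsor argument for bijectivity, the boundary-fibre computation of the linear part (where the twist $\varphi$ acts trivially on classes since a single rotation is isotopic to the identity through maps preserving $(M,L)$), and the diagram from $\partial\alpha_0=-\lambda$ — are fine and at the same level of rigor as the paper's "easy to check".
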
 
\begin{proof} 
  Consider a constant section $s_{\text{triv}}(z)
  = (x,z)$ of $M\times \D \cong \cE_t$ with $x\in
  L$.  By the topological description of the
  degeneration given in \S \ref{ss:def}, we see
  that $s_{\text{triv}}$ can degenerate to the
  union:
  \[
  \sigma(u) \cup \tilde{u} \colon \P^1 \cup \D \to E
  \cup_M (M\times \D)
  \]
  where $u\colon \D \to M$ is a disc contracting
  the $S^1$-orbit $e^{i\theta} x$ on $L$ and
  $\tilde{u} \colon \D \to M\times \D$ is given by
  $z\mapsto (u(\overline{z}), z)$.  This shows
  that $r([s_{\text{triv}}])= \sigma(u) -
  \hat{u}$.  Since the retraction map is a
  homomorphism of $H_2(M,L)$-modules, we have
  $r(\hbeta) = \sigma(u) - \hat{u}+ \hbeta$ in
  general.  When $u$ is a disc of the form: $\D
  \ni r e^{i\theta} \mapsto e^{i\theta} \cdot
  \gamma(r) \in M$, where $\gamma \colon [0,1]\to
  M$ is a path such that $\gamma(0) \in F_{\rm
    max}$ and $\gamma(1) \in L$, $\sigma(u)$ is
  homotopic to the maximal section $\sigma_0$ and
  $[u] = -\alpha_0$. This shows the formula
  $r(\hbeta) = \sigma_0 + \hat{\alpha}_0+\hbeta$.
  It is easy to check that $r$ is an isomorphism
  between section classes.
\end{proof} 
\begin{remark}
\label{rem:trivialization-difference}
The latter statement in Proposition
\ref{prop:retractionmap} 
is a consequence of the
difference of trivializations of $\partial \cE_t$
($t\neq 0$) and $\partial \cE_0$.  Recall that we
have a trivialization $\partial \cE_t \cong
M\times S^1$ in \eqref{eq:bdry-trivialization}
depending smoothly on $t\in \C$. For $t\neq 0$,
this trivialization is induced from the
isomorphism $\cE_t \cong M\times \D$ in
\eqref{eq:degeneration-fibre}; however for $t=0$,
this trivialization differs by the $S^1$-action
from the one induced by the isomorphism $\cE_0
\cong E \cup_M (M\times \D)$ in
\eqref{eq:degeneration-fibre}.
\end{remark} 


\begin{lemma}[Maslov index and vertical Chern number] 
\label{lem:Maslov}  
Let $u\colon \D \to M$ be a disc with boundary an
$S^1$-orbit on $L$, i.e.\ $u(e^{i\theta}) =
e^{i\theta} \cdot u(1)$ and $u(1)\in L$. Then $u$
defines a class in $\pi_2(M,L)$ and we have $
\mu(u) = 2 \lan c_1^{\rm vert}(E),
[\sigma(u)]\ran$.
\end{lemma}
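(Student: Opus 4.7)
The plan is to compute both sides via a common trivialization and to show that each equals (up to a factor of $2$) the winding number of a single loop in $U(n)$ built from the linearized $S^1$-action along the boundary orbit of $u$. Since $\D$ is contractible, we can choose a unitary trivialization $\tau\colon u^*TM \xrightarrow{\cong} \D\times\C^n$ (with respect to an $S^1$-invariant $\omega$-compatible almost complex structure and Hermitian metric on $M$). The linearized $S^1$-action along $u|_{\partial\D}(e^{i\theta}) = e^{i\theta}\cdot u(1)$ then produces a loop
\[
\rho(\theta) := \tau_{e^{i\theta}} \circ (e^{i\theta})_* \circ \tau_1^{-1} \in U(n),
\]
which will carry all the topological information on both sides.

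For the Maslov index, under $\tau$ the boundary Lagrangian loop $\theta \mapsto T_{u(e^{i\theta})}L = (e^{i\theta})_*T_{u(1)}L$ (using here that $L$ is $S^1$-invariant) becomes $\theta \mapsto \rho(\theta)\cdot \Lambda_0$ in $\Lambda(n)$, where $\Lambda_0 := \tau_1(T_{u(1)}L)$ is constant. Computing $\mu(u)$ by the standard $\det^2$ description of the generator of $\pi_1(\Lambda(n))\cong\Z$, one finds that the winding of $\det^2(\rho(\theta)\Lambda_0) = (\det\rho(\theta))^2\cdot \det^2(\Lambda_0)$ is $2\deg(\det\rho)$, giving $\mu(u) = 2\deg(\det\rho)$. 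For the vertical Chern number, the clutching description of $\sigma(u)^*T_{\rm vert}E$ from \S\ref{ss:def} shows that this bundle is obtained by gluing the trivial bundle $\D_0\times T_{u(1)}M$ on $\D_0$ to the pullback of $u^*TM$ along $z\mapsto u(z^{-1})$ on $\D_\infty$, with clutching function $(e^{-i\theta})_*$ at $z=e^{i\theta}$. Transporting both pieces to a common fibre via $\tau$, this becomes a loop in $U(n)$ whose determinant has winding number $\deg(\det\rho)$, yielding $\lan c_1^{\rm vert}(E),[\sigma(u)]\ran = \deg(\det\rho)$. Combining the two computations gives $\mu(u) = 2\lan c_1^{\rm vert}(E),[\sigma(u)]\ran$.

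The main obstacle will be the careful bookkeeping of signs and orientations: the clutching convention (whether $c_1$ equals $+\deg$ or $-\deg$ of the clutching function with respect to the chosen orientation of $\partial\D_0 \subset \P^1$), the orientation of the boundary circle in the Maslov index convention, and the direction of pullback along $z \mapsto z^{-1}$ between $\D_\infty$ and $\D$. A sanity check with the toy example $M = \P^1$, $L = S^1$ acting by rotation, and $u(z)=z$ (where directly $\mu(u) = 2$ and $\sigma(u)^*T_{\rm vert}E \cong \cO(1)$) will pin down all conventions consistently with the claimed identity.
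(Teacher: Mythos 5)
Your proposal is correct, but it takes a genuinely different route from the paper's. The paper never chooses a trivialization: it uses the definition of $\mu(u)$ as the signed count of zeros of a transverse section $s$ of $\det(u^*TM)^{\otimes 2}$ extending a positive boundary section $s_0$ of $\det_\R((u|_{\partial \D})^*TL)^{\otimes 2}$, chooses $s_0$ to be $S^1$-equivariant (possible since $L$ is $S^1$-invariant), and then transplants $s$ directly into a section $t$ of $\det(\sigma(u)^*T_{\rm vert}E)^{\otimes 2}$ by $t|_{\D_0}(z)=s_0(1)$, $t|_{\D_\infty}(z)=s(z^{-1})$; the equivariance of $s_0$ is exactly what makes $t$ continuous across the clutching circle, and comparing zeros of $t$ and $s$ gives $2\lan c_1^{\rm vert}(E),[\sigma(u)]\ran=\mu(u)$ with essentially no sign bookkeeping. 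You instead fix a unitary trivialization of $u^*TM$ (for an $S^1$-invariant compatible metric, so that the linearized action $\rho(\theta)$ lies in $U(n)$ --- this is where the invariance of $L$, $\omega$, $J$ enters on your side) and reduce both invariants to the single winding number $\deg(\det\rho)$: the Maslov index via the Lagrangian loop $\rho(\theta)\Lambda_0$ and the $\det^2$ map, the vertical Chern number via the clutching function of $\sigma(u)^*T_{\rm vert}E$. This does work: with the gluing $g(x,e^{i\theta})=(e^{-i\theta}x,e^{i\theta})$ the transition from the constant $\D_0$-frame to the pulled-back frame over $\D_\infty$ is $\rho(-\theta)$, and with the standard convention (transition read along $\partial\D_0$, $c_1$ equal to minus its determinant winding) one gets $\lan c_1^{\rm vert}(E),[\sigma(u)]\ran=+\deg(\det\rho)$, consistent with your toy example $u(z)=z$ in $(\P^1,S^1)$; so the sign issue you flag does close up. Your approach buys an explicit identification of both sides with the winding of the linearized $S^1$-action, making the mechanism transparent; the paper's buys brevity and freedom from orientation conventions, since the very same section is counted on both sides.
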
 
\begin{proof} 
  We recall the definition of Maslov index of a
  disc $u\colon (\D,S^1) \to (M,L)$.  We set
  $\gamma = u|_{\partial \D}$.  Note that
  $u^*TM|_{S^1}$ is a complexfication of the
  subbundle $\gamma^*TL$.  Thus
  $\det(u^*TM)|_{S^1}$ is a complexification of
  the real line bundle $\det_\R(\gamma^*TL)$.  On
  the other hand $\det_\R (\gamma^*TL)^{\otimes
    2}$ has a canonical orientation.  Take a
  positive (nowhere vanishing) section $s_0$ of
  $\det_\R(\gamma^* TL)^{\otimes 2}$.  The Maslov
  index of $u$ is the signed count of zeros of a
  transverse section $s$ of $\det(u^* TM)^{\otimes
    2}$ such that $s|_{\partial \D} = s_0$.

  When $u|_{\partial \D}$ is an $S^1$-orbit of
  $L$, we can take $s_0$ above to be
  $S^1$-equivariant.  A transverse section $s$ of
  $\det(u^*TM)^{\otimes 2}$ with $s|_{\partial \D}
  = s_0$ defines a section $t\in \det(\sigma(u)^*
  T_{\rm vert} E)^{\otimes 2}$ by
\[
t|_{\D_0} (z)  = s_0(1), \quad 
t|_{\D_\infty}(z)  = s(z^{-1}).  
\]
Then the numbers of zeros of $t$ and $s$ coincide.
The lemma follows.
\end{proof} 
Proposition \ref{prop:retractionmap} and Lemma
\ref{lem:Maslov} show the following corollary:
\begin{corollary} 
\label{cor:Maslov} 
Let $r\colon H_2^{\rm sec}(\cE_t,\hL_t) \to
H_2^{\rm sec}(\cE_0,\hL_0)$ be the retraction map
for $t\neq 0$.  Suppose that $r(\hbeta)= \sigma +
\halpha$ with $\alpha,\beta \in H_2(M,L)$.  Then
$\mu(\beta) = 2 \lan c_1^{\rm vert}(E), \sigma
\ran + \mu(\alpha)$.
\end{corollary}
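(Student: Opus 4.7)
The plan is to combine Proposition \ref{prop:retractionmap} with Lemma \ref{lem:Maslov}, using the fact that on a fibre $M \subset E$ the vertical tangent bundle $T_{\rm vert} E$ restricts to $TM$.

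First I would use Proposition \ref{prop:retractionmap} to write
\[
r(\hbeta) = \sigma(u) - \hat{u} + \hbeta
\]
for an auxiliary disc $u\colon\D\to M$ with $u(e^{i\theta})=e^{i\theta}\cdot u(1)$ and $u(1)\in L$. Comparing this with the given decomposition $r(\hbeta) = \sigma + \halpha$ inside $H_2^{\rm sec}(\cE_0,\hL_0)$, I would then invoke the Mayer-Vietoris presentation from the proof of Lemma \ref{lem:relativehomology}, which identifies the ambiguity in the decomposition with $H_2(M)$ acting diagonally as $(d,-\hat{d})$. This yields $d\in H_2(M)$ so that
\[
\sigma = \sigma(u) + d, \qquad \halpha = \hbeta - \hat{u} - \hat{d}.
\]

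Next I would take Maslov indices. Since $\mu$ is additive and equals $2\langle c_1(M),\,\cdot\,\rangle$ on the sphere class $d$, the second equation gives
\[
\mu(\alpha) = \mu(\beta) - \mu(u) - 2\langle c_1(M), d\rangle.
\]
Lemma \ref{lem:Maslov} substitutes $\mu(u)=2\langle c_1^{\rm vert}(E),[\sigma(u)]\rangle$, so
\[
\mu(\beta) - \mu(\alpha) = 2\langle c_1^{\rm vert}(E),[\sigma(u)]\rangle + 2\langle c_1(M), d\rangle.
\]

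Finally, the identity $T_{\rm vert}E|_M = TM$ (clear from $T_{\rm vert}E := \Ker(d\pi)$ restricted to a fibre) gives $\langle c_1^{\rm vert}(E), d\rangle = \langle c_1(M), d\rangle$ when $d\in H_2(M)\subset H_2(E)$ is regarded as a fibre class. Together with $\sigma = \sigma(u)+d$, this converts the right-hand side into $2\langle c_1^{\rm vert}(E),\sigma\rangle$, yielding the desired equality $\mu(\beta) = 2\langle c_1^{\rm vert}(E),\sigma\rangle + \mu(\alpha)$. The only mild subtlety is bookkeeping the sign and the identification of ambiguities in the Mayer-Vietoris decomposition; everything else is a formal consequence of the two cited results.
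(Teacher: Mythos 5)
Your argument is correct and is essentially the route the paper intends: the corollary is stated there as an immediate consequence of Proposition \ref{prop:retractionmap} and Lemma \ref{lem:Maslov}, and your write-up simply makes explicit the $H_2(M)$-ambiguity in the fibre-product decomposition of $H_2^{\rm sec}(\cE_0,\hL_0)$ together with the observation that $c_1^{\rm vert}(E)$ restricts to $c_1(M)$ on fibre classes, which makes the final formula independent of that ambiguity. Nothing further is needed.
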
 

\begin{remark} 
We have $\mu(\hbeta) = \mu(\beta) +2$ for $\beta \in H_2(M,L)$. 
\end{remark} 

\subsubsection{Example} 
We give an example of degenerating holomorphic
discs.  Consider a family of (constant)
holomorphic disc sections $u_t\colon (\D,S^1) \to
(\cE_t, \hL_t)$ given by
\[
u_t(z) = [x_0, z, t, (z,t)] 
\]
for some $x_0\in L$. 
For a fixed non-zero $z\in \D$, we have 
\[
\varphi (z) := \lim_{t\to 0} u_t(z) = [x_0, z, 0, (z,0)]. 
\]
This can be completed to a holomorphic disc
section $\varphi \colon \D \to M\times \D \subset
\cE_0$.  Note that the limit
\[
\lim_{z\to 0} 
\varphi(z) = [x_1, 0, 0, (1,0)]
\qquad \text{where} \quad x_1 := \lim_{z\to 0}
z^{-1} x_0 \in M,
\]
exists by the completeness of $M$, and it is fixed by the
\(\C^\times\) action.
On the other
hand, we can see a bubbling off holomorphic sphere
at $z=0$ by the usual rescaling:
\[
\psi(z):= \lim_{t\to 0} u_t( tz) 
=\lim_{t\to 0} [t^{-1} x_0, tz, t, (z,1)]
=[x_1, 0,0, (z,1)]. 
\]
This defines a holomorphic section $\psi \colon
\P^1 \to E \subset \cE_0$ associated to the
$\C^\times$-fixed point $x_1\in M$.  Note that
$\psi(\infty) = \varphi(0)$ and $\partial \varphi$
is an inverse $S^1$-orbit on $L$.

\subsection{Degeneration formula} 
\label{subsec:degeneration} 
In what follows, we propose a conjectural
degeneration formula and discuss its consequences.
As before, $M$ denotes a smooth projective variety
equipped with a $\C^\times$-action and an
$S^1$-invariant K\"{a}hler form $\omega$; $L$ is a
Lagrangian submanifold which is preserved by the
$S^1$-action.  We assume that $M$ is
simply-connected and $L$ is connected.  Moreover
we assume that $L$ is oriented and relatively spin
and we fix a relative spin structure
\cite[Definition 8.1.2]{FuOh-La09}.

Take $\beta \in H_2(M,L)$.  We consider the moduli
space $\cM_1(\hbeta)$ of stable holomorphic maps
from genus zero bordered Riemann surface
$(\Sigma, \partial \Sigma)$ to $(\ocE_t,
\hL_t)\cong (M\times \P^1, L\times S^1)$ with one
boundary marked point and in the class $\hbeta \in
H_2^{\rm sec}(\cE_t, \hL_t)$ (where $t\neq 0$; see
Notation \ref{nota:discsection}).  Such stable
maps project onto the disc $(\D,S^1) \subset
(\P^1,S^1)$ on the base and so are contained in
$\cE_t$ (see Remarks \ref{rem:closing} and
\ref{rem:closing-homology}).  The virtual
dimension of $\cM_1(\hbeta)$ is $n+1+\mu(\hbeta)
-2 = n+1+\mu(\beta)$ with $n:= \dim_\C M$.  The
corresponding moduli space at $t=0$ should be
described as the fibre product:
\[
\bigcup_{r(\hbeta) = \sigma + \halpha } \cM_{\rm
  S}(\sigma) \times_M \cM_{1,1}^{\rm rel}(\halpha)
\] 
where $\cM_{\rm S}(\sigma)$ is the moduli space of
holomorphic sections of $E$ appearing in \S
\ref{subsec:Seidel} and $\cM_{1,1}^{\rm
  rel}(\halpha)$ is the moduli space of stable
holomorphic maps from genus zero bordered Riemann
surfaces to $(M\times \P^1, L\times S^1)$ in the
class $\halpha \in H_2^{\rm sec}(M\times \D,
L\times S^1)$ with one boundary marked point and
one interior marked point such that the image of
the interior marked point lies in $M\times \{0\}$.
The superscript ``rel" (which means ``relative")
signifies the last condition.  The fibre product
above is taken with respect to the interior
evaluation maps.  One can write:
\begin{equation} 
\label{eq:M11rel} 
\cM_{1,1}^{\rm rel}(\halpha) = 
\cM_{1,1}(\halpha) \times_{M\times \P^1}
(M\times \{0\}) 
\end{equation} 
using the moduli space $\cM_{1,1}(\halpha)$ of
bordered stable maps to $(M\times \P^1, L\times
S^1)$ of class $\halpha$ with one boundary marking
and one interior marking.  Then a Kuranishi
structure on $\cM_{1,1}^{\rm rel}(\halpha)$ is
induced from the Kuranishi structure on
$\cM_{1,1}(\halpha)$ (as defined in \cite[\S
7.1]{FuOh-La09}) via this presentation.  The
virtual dimension is
\[
\vdim \cM_{1,1}^{\rm rel}(\halpha) 
= n+1 + \mu(\alpha).  
\]
We write $\ev^{\rm (i)} \colon \cM^{\rm
  rel}_{1,1}(\halpha) \to M$ for the interior
evaluation map and $\ev^{\rm (b)} \colon \cM^{\rm
  rel}_{1,1}(\halpha) \to L\times S^1$ for the
boundary evaluation map.

When the virtual fundamental \emph{chains} on the
moduli spaces $\cM_1(\hbeta)$ and $\cM_{\rm
  S}(\sigma) \times_M \cM_{1,1}^{\rm
  rel}(\halpha)$ happen to be \emph{cycles}, we
expect the following degeneration formula:
\begin{equation} 
\label{eq:degenerationformula} 
\varphi_* \ev_*[\cM_1(\hbeta) ]^{\rm vir} = 
\sum_{r(\hbeta) = \sigma + \halpha} 
\ev_* [\cM_{\rm S} (\sigma)\times_M \cM^{\rm
  rel}_{1,1}(\halpha)]^{\rm vir}  
\end{equation} 
in $H_*(L\times S^1)$.  Here $\ev$ on the
both-hand sides denotes the evaluation map at the
boundary markings taking values in $\hL_t \cong
L\times S^1$ and $\varphi \colon L\times S^1 \to
L\times S^1$ is the map $(x,e^{i\theta}) \mapsto
(e^{-i\theta} \cdot x, e^{i\theta})$ which
corresponds to the difference of boundary
trivializations (see Remark
\ref{rem:trivialization-difference}).  We will
study below when the both-hand sides of
\eqref{eq:degenerationformula} make sense as
cycles; then will calculate them in terms of
Seidel elements and open Gromov-Witten invariants.

\subsubsection{The left-hand side of
  \eqref{eq:degenerationformula}}
\label{subsubsec:lhs} 
When $\beta=0$, $\cM_1(\hbeta)$ consists of
constant disc sections and $\ev\colon
\cM_1(\hbeta) \to L\times S^1$ is a homeomorphism.
All constant disc sections are Fredholm regular.
When $\beta \neq 0$, we have a natural map
\[
\cM_1(\hbeta) \to \cM_1(\beta) 
\]
induced by the projection $\cE_t \to M$, where
$\cM_1(\beta)$ is the moduli space of one-pointed
bordered stable maps to $(M,L)$ in the class
$\beta$.  By taking the graph of a disc component,
we can see that this map is surjective.
Therefore, for $\beta \neq 0$, $\cM_1(\hbeta)$ is
non-empty if and only if $\cM_1(\beta)$ is
non-empty.  Moreover, if $\cM_1(\beta)$ is
non-empty, $\cM_1(\hbeta)$ has boundary 
(see \cite[\S 7.1.1]{FuOh-La09} for the boundary description) 
since a bordered stable map of class $\hbeta$ can be
constructed
as the union of a constant disc section
and a disc of class $\beta$ (which is constant in the $\D$-direction).  
Therefore, we have
\begin{lemma} 
\label{lem:lhs} 
The virtual cycle $\ev_*[\cM_1(\hbeta)]^{\rm vir}$ 
is well-defined if $\cM_1(\beta) = \emptyset$. We have 
\[
\varphi_*\ev_*[\cM_1(\hbeta)]^{\rm vir} = 
\begin{cases}
  [L\times S^1]  & \text{if } \beta =0; \\
  0 & \text{if } \beta \neq 0 \text{ and }
  \cM_1(\beta) = \emptyset.
\end{cases}
\]
\end{lemma}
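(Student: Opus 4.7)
The plan is to treat the two cases separately, using the description of $\cM_1(\hbeta)$ given just above the statement.

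For $\beta = 0$, I would first argue that every element of $\cM_1(\hbeta)$ is a constant disc section $u(z) = (x,z) \in M\times\D$ with $x \in L$, together with a boundary marking in $S^1$. Indeed, any disc or sphere bubble would contribute a nonzero relative homology class, so no stable configuration of total class $\hbeta$ (with $\beta=0$) can occur other than such a constant section. Evaluation at the boundary marking then identifies $\cM_1(\hbeta)$ with $L \times S^1$. Fredholm regularity follows because at a constant section the linearized $\bar\partial$-operator splits as the trivial operator on the $\D$-factor plus the $\bar\partial$-operator on the trivial bundle $\D \times T_xM \to \D$ with totally real boundary condition $T_xL$, and both factors are surjective. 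The virtual fundamental chain is therefore the honest fundamental class $[L \times S^1]$. To conclude, I would verify $\varphi_*[L \times S^1] = [L \times S^1]$ by exhibiting the homotopy $\varphi_s(x, e^{i\theta}) := (e^{-is\theta}\cdot x, e^{i\theta})$, $s \in [0,1]$, from $\varphi_0 = \id$ to $\varphi_1 = \varphi$, which is well-defined precisely because $S^1$ preserves $L$.

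For $\beta \neq 0$ with $\cM_1(\beta) = \emptyset$, the plan is to show that $\cM_1(\hbeta)$ is itself empty. Given any bordered stable map $u \colon (\Sigma, \partial\Sigma) \to (\cE_t, \hL_t) \cong (M \times \D, L \times S^1)$ of class $\hbeta$ with one boundary marking, post-composition with the projection $\pr_M \colon M \times \D \to M$ followed by collapsing components that become unstable produces a bordered stable map to $(M, L)$ of class $\beta = (\pr_M)_*\hbeta$ with one boundary marking, that is, an element of $\cM_1(\beta)$. The hypothesis $\cM_1(\beta) = \emptyset$ therefore forces $\cM_1(\hbeta) = \emptyset$. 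In particular $\cM_1(\hbeta)$ has no boundary, so its virtual fundamental cycle is trivially well-defined and equal to zero.

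The main technical points I anticipate are the stabilization step in the second case (confirming that the projection of every $\hbeta$-class stable map really does become stable after collapsing), together with Fredholm regularity and orientation compatibility for constant sections in the first case. Once these foundational points are secured, the homotopy argument for $\varphi$ and the emptiness argument deliver the lemma.
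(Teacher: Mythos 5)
Your overall route is the same as the paper's: for $\beta=0$ you identify $\cM_1(\hbeta)$ with the space of constant disc sections, which are Fredholm regular and for which $\ev$ is a homeomorphism onto $L\times S^1$; for $\beta\neq 0$ you use the map $\cM_1(\hbeta)\to\cM_1(\beta)$ obtained by projecting to $M$ and stabilizing, so that $\cM_1(\beta)=\emptyset$ forces $\cM_1(\hbeta)=\emptyset$ and the cycle is trivially zero. This is exactly the paper's argument (the paper additionally notes that the projection map is surjective, which is not needed for the lemma itself). One small imprecision in your $\beta=0$ case: nonconstant bubbles are excluded by positivity of area (nonzero class), whereas constant bubbles have zero class and are excluded by stability of the configuration; your sentence attributes both exclusions to the homology class, but the combined argument goes through.

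The step that fails is your verification that $\varphi_*[L\times S^1]=[L\times S^1]$. The proposed homotopy $\varphi_s(x,e^{i\theta})=(e^{-is\theta}\cdot x, e^{i\theta})$ is not well-defined on $L\times S^1$ for $s\notin\Z$: replacing $\theta$ by $\theta+2\pi$ changes $e^{-is\theta}\cdot x$ unless $e^{-2\pi i s}$ acts trivially on $L$. Moreover, no repair of this homotopy exists in general, because $\varphi$ is a fibered Dehn twist which is typically not homotopic to the identity: on $H_1(L\times S^1)\cong H_1(L)\oplus H_1(S^1)$ it sends the $S^1$-generator to $[S^1]-\lambda$, where $\lambda$ is the class of an $S^1$-orbit on $L$, and $\lambda\neq 0$ in the intended applications (for $M=\P^1$ with $L$ the equator, $\varphi$ is the standard Dehn twist on $T^2$). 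The statement you need is nonetheless true for a simpler reason: $\varphi$ is a diffeomorphism of the closed connected oriented manifold $L\times S^1$ which is orientation-preserving (its differential is block-triangular, acting as the identity in the $S^1$-direction and as the differential of the action of $e^{-i\theta}\in S^1$ in the $L$-direction, the latter orientation-preserving since $S^1$ is connected), and any orientation-preserving self-diffeomorphism fixes the fundamental class. With that one-line replacement your proof is complete and coincides with the paper's.
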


\subsubsection{The right-hand side of
  \eqref{eq:degenerationformula}}  
Take $(\sigma, \alpha)\in H_2^{\rm sec}(E) \times
H_2(M,L)$ such that $\sigma + \halpha =
r(\hbeta)$.  By Corollary \ref{cor:Maslov} and
Proposition \ref{prop:retractionmap}, we have
\begin{align}
\label{eq:Maslov+vChern} 
& \mu(\beta) = 
2 \lan c_1^{\rm vert}(E), \sigma \ran + \mu(\alpha) 
\\ 
\label{eq:boundaryrelation}
& \partial \beta = \partial \alpha +\lambda 
\end{align} 
where $\lambda\in H_1(L)$ is the class of an 
$S^1$-orbit. 

Suppose $\alpha=0$. 
This can happen only when $\partial \beta = \lambda$ 
by \eqref{eq:boundaryrelation}. 
Since $\alpha=0$, $\cM_{1,1}^{\rm rel}(\halpha)$
consists of constant disc sections and $\ev^{\rm
  (b)} \colon \cM_{1,1}^{\rm rel}(\halpha) \cong L
\times S^1$.  The interior evaluation $\ev^{\rm
  (i)} \colon \cM_{1,1}^{\rm rel}(\halpha) \to M$
is given by the projection $L\times S^1 \to L
\subset M$.  Thus
\begin{align}
\label{eq:alphazero}
\begin{split}  
  \ev_* [\cM_{\rm S}(\sigma) \times_M \cM^{\rm
    rel}_{1,1}(\halpha)] ^{\rm vir}
  & = \ev_* [\cM_{\rm S}(\sigma)\times_M (L\times
  S^1)]^{\rm vir} \\ 
  & = (\cS_{\sigma} \cap [L]) \times [S^1]
\end{split} 
\end{align} 
where 
\begin{equation} 
\label{eq:cS}
\cS_{\sigma} := \PD  \left (
  \ev_*[\cM_{\rm S}(\sigma)]^{\rm vir} \right) 
\in H^{-\mu(\beta)}(M).  
\end{equation} 
Here we used the virtual dimension formula
\eqref{eq:vdim-Seidel} and
\eqref{eq:Maslov+vChern}.

Suppose $\alpha\neq 0$.  By the same argument as
in \S \ref{subsubsec:lhs}, $\cM_{1,1}^{\rm
  rel}(\halpha)$ is non-empty if and only if
$\cM_{1}(\alpha)$ is non-empty; also
$\cM_{1,1}^{\rm rel}(\halpha)$ has boundary if
$\cM_1(\alpha)$ is non-empty.  Assume that
$\cM_1(\alpha)$ has no boundary.  This means that
every stable map in $\cM_1(\alpha)$ has only one
disc component\footnote {See \cite[\S
  7.1.1]{FuOh-La09} for the boundary description
  of the moduli spaces.} (but possibly with sphere
bubbles).  Let us study the moduli space
$\cM_{1,1}^{\rm rel}(\halpha)$ and its boundary.
Since $\alpha \neq 0$, we have a map
\begin{equation}\label{eq:forget-udisc}
  \frf =(\frf_1, \frf_2) \colon \cM_{1,1}^{\rm
    rel}(\halpha) \to \cM_1(\alpha) \times S^1.   
\end{equation}
The first factor $\frf_1$ is given by projecting
bordered stable maps to $M$, forgetting the
interior marking and collapsing unstable
components; the second factor $\frf_2$ is the
boundary evaluation $\ev^{\rm (b)} \colon
\cM_{1,1}^{\rm rel}(\halpha) \to L\times S^1$
followed by the projection $L\times S^1 \to
S^1$. The map $\frf$ can be viewed as a
tautological family of stable discs over
$\cM_1(\alpha)\times S^1$. In fact we have the
following result.

\begin{lemma} 
\label{lem:universaldisc}
Let $u\colon (\Sigma, \partial \Sigma) \to (M,L)$
be a one-pointed bordered stable map of class
$\alpha$ and $x\in \partial \Sigma$ be the
boundary marking.  Suppose that $\Sigma$ has only
one disc component.  Then the fibre
$\frf^{-1}([u,\Sigma, x],z)$ at $([u,\Sigma,x],z)
\in \cM_1(\alpha) \times S^1$ can be identified
with the oriented real blow-up $\hSigma$ of
$\Sigma$ at $x$ (see the proof below for the
definition of $\hSigma$) and the interior
evaluation $\ev^{\rm (i)}$ on
$\frf^{-1}([u,\Sigma, x],z)$ can be identified
with the map $\hSigma \to \Sigma \xrightarrow{u}
M$.
\end{lemma}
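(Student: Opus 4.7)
The plan is to construct a natural bijection $\Phi \colon \hSigma \to \frf^{-1}([u,\Sigma,x],z)$ and to verify the stated formula for $\ev^{\rm (i)}$. Recall that $\hSigma$ is obtained from $\Sigma$ by replacing the boundary point $x$ with the half-circle of inward unit tangent directions at $x$, producing a bordered surface with a natural collapse map $\pi \colon \hSigma \to \Sigma$ which is a homeomorphism over $\Sigma \setminus \{x\}$. A point of the fiber $\frf^{-1}([u,\Sigma,x],z)$ is an equivalence class of bordered stable maps $(u', \Sigma', x', y') \in \cM^{\rm rel}_{1,1}(\halpha)$ such that $u'(y') \in M \times \{0\}$, the $S^1$-component of $\ev^{\rm (b)}(x')$ equals $z$, and forgetting $y'$, stabilizing, and projecting to $M$ recovers $(u, \Sigma, x)$. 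Since $\halpha$ has $\P^1$-base class one and $\Sigma$ has only one disc component, the disc-section subtree of $\Sigma'$ maps to $(\D, S^1)$ with total $\P^1$-degree one.

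For $p \in \hSigma$ with $\pi(p) \neq x$, I take $\Sigma' = \Sigma$, $x' = x$, $y' = \pi(p)$ and define $u'$ to equal $u$ in the $M$-factor, and in the $\P^1$-factor to be constant on every sphere bubble of $\Sigma$ and equal on the disc component $\Sigma_{\rm disc}$ to the unique biholomorphism $\phi \colon \Sigma_{\rm disc} \to \D$ satisfying $\phi(x) = z$ and $\phi(q) = 0$. Here $q$ equals $y'$ if $y' \in \Sigma_{\rm disc}$, and otherwise $q$ is the node at which the bubble tree containing $y'$ attaches to $\Sigma_{\rm disc}$; existence and uniqueness of $\phi$ follow from the fact that $\Aut(\D)$ is three-dimensional and the prescribed constraints are three independent real conditions. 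For $p$ on the new arc corresponding to an inward tangent direction $\theta$ at $x$, I build a bubble configuration: $\Sigma' = \Sigma \cup_x \D'$ where $\D'$ is a new disc attached at $x$ carrying both $x'$ and $y'$; the map $u'$ is constant $z$ in the $\P^1$-factor on $\Sigma$ and constant $u(x)$ in the $M$-factor on $\D'$, while on $\D'$ the $\P^1$-component is the biholomorphism $\phi' \colon \D' \to \D$ subject to $\phi'(\text{node}) = z$, $\phi'(x') = z$, and $\phi'(y') = 0$. A dimension count (seven parameters describing the node position, $x'$, $y'$, and $\phi'$ on $\D'$, cut by three constraints and the three-dimensional $\Aut(\D')$) shows that such bubble configurations sweep out a one-parameter family, and an explicit rescaling of a sequence $y'_\varepsilon \to x$ approaching $x$ along direction $\theta$ matches each $\theta$ with a unique configuration.

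The formula for $\ev^{\rm (i)}$ then follows directly: for $\pi(p) \neq x$, we have $y' = \pi(p)$ and $u'_M = u$, so $\ev^{\rm (i)}(\Phi(p)) = u(\pi(p))$; for $p$ on the arc, $\pi(p) = x$ and $u'_M|_{\D'}$ is the constant $u(x) = u(\pi(p))$, yielding the same formula. The main obstacle is verifying that $\Phi$ is a homeomorphism, especially the local structure near the new arc: one needs a careful Gromov-compactness rescaling analysis to show that the bubbling limits of families $y'_\varepsilon \to x$ are parameterized precisely by inward tangent directions at $x$---nothing coarser (no identification of distinct directions) and nothing finer (no additional multi-bubbled or unstable limits contribute).
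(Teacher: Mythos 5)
Your construction over the interior of $\hSigma$ agrees with the paper's, but the treatment of $\partial\hSigma$ has two genuine problems. First, you never handle the boundary points of $\hSigma$ lying over $\partial\Sigma\setminus\{x\}$: your general recipe ``$\Sigma'=\Sigma$, $y'=\pi(p)$'' would place the \emph{interior} marked point at a boundary point of $\Sigma$, which is not allowed, and in any case the relative condition $\hu(y')\in M\times\{0\}$ cannot hold there since the disc factor maps $\partial\Sigma$ to $S^1$. The correct configuration (the first boundary type in the paper) attaches at $p\in\partial\Sigma\setminus\{x\}$ a disc bubble that is constant $u(p)$ in $M$ and of degree one in the $\D$-factor (e.g.\ $w\mapsto zw$), with the interior marking at its centre and the main component constant $z$ in the $\D$-factor; these points form most of $\partial\cM_{1,1}^{\rm rel}(\halpha)$ over $[u,\Sigma,x]$ and are essential for Corollary \ref{cor:boundarymapstoL}.

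Second, your one-parameter family over the exceptional arc is inconsistent as written: you require the biholomorphism $\phi'\colon\D'\to\D$ to satisfy $\phi'(\mathrm{node})=z$ and $\phi'(x')=z$ at two \emph{distinct} boundary points, which is impossible for an injective map, so with the main component frozen at the constant value $z$ there is no such configuration at all. The actual parameter is the constant value of the $\D$-component on the main part (equivalently, the relative position of the node, the boundary marking and the interior marking on the bubble): in the paper the bubble map is $w\mapsto(u(x),e^{-2i\theta}zw)$, the main component is constant $e^{-2i\theta}z$, and the boundary marking sits at $e^{2i\theta}\in\partial\D$, so that $\frf_2$ still evaluates to $z$ while $\theta\in(0,\pi)$ sweeps the arc. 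You also leave out the two endpoint (corner) configurations of the exceptional locus, where a second disc bubble appears, which are needed for the identification with $\hSigma$ as a manifold with corners. Your appeal to a dimension count and a Gromov-compactness rescaling does not substitute for exhibiting these configurations, so the proof as it stands does not establish the claimed homeomorphism $\hSigma\cong\frf^{-1}([u,\Sigma,x],z)$.
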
 
\begin{proof} 
  The assumption that $\Sigma$ has only one disc
  component was made for simplicity's sake (and
  this is the case we are interested in). In
  general, the fibre of $\frf$ can be identified
  with a smoothing of $\hSigma$ at the boundary
  singularities.  See \cite[Lemma
  7.1.45]{FuOh-La09} for a similar statement.

  We identify a neighbourhood of $x\in \Sigma$
  with the upper-half disc $\D_{+} = \{w\in
  \D\,:\,\Im(w)\ge 0\}$ where $x$ corresponds to
  $0\in \D_+$.  The oriented real blow-up
  $\hSigma$ is defined by replacing this
  neighbourhood with $[0,\pi] \times [0,1]$:
  \[
  \hSigma = (\Sigma\setminus \{x\})
  \cup_{\D_+\setminus \{0\}} \left ([0,\pi] \times
    [0,1]\right )
  \]
  where $\D_+\setminus \{0\}$ is identified with
  $[0,\pi] \times (0,1]$ by the map $w \mapsto
  (\arg(w), |w|)$.  Note that $\hSigma$ is a real
  analytic manifold (with boundary and corner)
  equipped with a natural projection $\hSigma \to
  \Sigma$.

  For a point $p\in \hSigma$, we shall construct a
  bordered stable map in the fibre
  $\frf^{-1}((u,\Sigma,x), z)$.  Suppose $p \in
  \hSigma \setminus \partial \hSigma \cong \Sigma
  \setminus \partial \Sigma$.  Note that $\Sigma$
  is a union of one disc component $\Sigma_0$ and
  trees of sphere bubbles.  If $p$ is in a tree of
  spheres bubbles, let $q$ be the intersection
  point of the tree (on which $p$ lies) and the
  disc $\Sigma_0$.  If $p$ is in the interior of
  $\Sigma_0$, set $q := p$.  Take a unique
  holomorphic map $v \colon \Sigma_0 \to \D$ which
  sends $q$ to $0\in \D$ and $x\in \partial
  \Sigma_0$ to $z\in S^1$.  Extend $v$ to the
  whole $\Sigma$ so that it is constant on each
  sphere component.  Then we obtain a bordered
  stable map $\hu = (u,v) \colon \Sigma \to
  M\times \D$ of class $\halpha$ with $p$ a new
  interior marked point.  (If $p$ is a node, we
  insert at the node a trivial sphere with an
  interior marking.)

  Next consider the case $p \in \partial \hSigma$.
  In this case, the corresponding bordered stable
  map is in the boundary of $\cM_{1,1}^{\rm
    rel}(\halpha)$.  See Figure
  \ref{fig:boundarypoints} below.  If $p$ is not
  in the exceptional locus $[0,\pi]$ of $\hSigma
  \to \Sigma$, we attach a disc $\D$ to $\Sigma$
  by identifying $1\in \partial\D$ with
  $p\in \partial \Sigma$ and define a map $\hu
  \colon \D \mathbin{\lrsubscripts{\cup}{1}{p}}
  \Sigma \to M\times \D$ by
\[
\hu|_\D (w) = (u(p), zw), \quad 
\hu|_{\Sigma}(y) =  (u(y), z). 
\]
A new interior marking is taken to be $0\in \D$.
If $p$ corresponds to an interior point $\theta
\in (0,\pi)$ of the exceptional locus $[0,\pi]$ of
$\hSigma \to \Sigma$, we attach a disc $\D$ to
$\Sigma$ by identifying $1\in \partial\D$ with
$x\in \partial \Sigma$ and define a map $\hu\colon
\D \mathbin{\lrsubscripts{\cup}{1}{x}} \Sigma \to
M\times \D$ by
\[
\hu|_{\D}(w) = (u(x), e^{-2i\theta}z w), \quad 
\hu|_{\Sigma}(y) = (u(y), e^{-2i\theta} z).  
\]
We put a new boundary marking at $e^{2i\theta} \in
\D$ and a new interior marking at $0\in \D$.  If
$p$ is a boundary point of the exceptional locus
$[0,\pi]$, say, $0\in [0,\pi]$, we consider the
domain $\D^{(1)}
\mathbin{\lrsubscripts{\cup}{1}{-1}} \D^{(2)}
\mathbin{\lrsubscripts{\cup}{1}{x}} \Sigma$
(subscripts signify how to identify boundary
points) with a boundary marking $i\in \D^{(2)}$
and an interior marking $0\in \D^{(1)}$ and define
a map $\hu \colon \D^{(1)} \cup \D^{(2)} \cup
\Sigma \to M\times \D$ by:
\[
\hu|_{\D^{(1)}} (w)= (u(x), zw), \quad 
\hu|_{\D^{(2)}} (w) = (u(x), z), \quad 
\hu|_{\Sigma}(y) = (u(y), z).  
\]
When $p$ corresponds to $\pi\in [0,\pi]$, we take
$-i\in \D^{(2)}$ in place of $i\in\D^{(2)}$ as a
boundary marking.  One can see that the above
construction defines a homeomorphism $\hSigma
\cong \frf^{-1}([u,\Sigma,x],z)$.  The 
last statement is obvious.
\end{proof} 

\begin{figure}[htbp]
\begin{center} 
\begin{picture}(300,100)  
\put(-10,60){\ellipse{30}{80}} 
\put(35,20){\ellipse{83}{20}}
\put(-10,60){\makebox(0,0){\circle*{3}}}
\put(35,10){\makebox(0,0){\circle*{3}}}
\put(32,3){\makebox(0,0){$b$}}
\put(-5,60){\makebox(0,0){$i$}}
\put(35,20){\makebox(0,0){$\alpha$}}

\put(105,60){\ellipse{30}{80}} 
\put(150,20){\ellipse{83}{20}} 
\put(105,60){\makebox(0,0){\circle*{3}}}
\put(110,60){\makebox(0,0){$i$}}
\put(150,20){\makebox(0,0){$\alpha$}} 
\put(90,60){\makebox(0,0){\circle*{3}}}
\put(84,60){\makebox(0,0){$b$}}

\put(220,70){\ellipse{30}{60}}
\put(230,27){\shade\circle{30}}
\put(284,20){\ellipse{80}{20}}
\put(284,20){\makebox(0,0){$\alpha$}} 
\put(219,16){\makebox(0,0){\circle*{3}}}
\put(212,14){\makebox(0,0){$b$}}
\put(220,70){\makebox(0,0){\circle*{3}}}
\put(225,70){\makebox(0,0){$i$}}
\end{picture} 
\end{center} 
\caption{Three types of boundary points of
  $\cM_{1,1}^{\rm rel}(\halpha)$.  The horizontal
  direction is $M$ and the vertical direction is
  $\D$. The boundary/interior markings are denoted
  by $b$ and $i$ respectively. The shaded disc is
  a component where the map is constant.}
\label{fig:boundarypoints} 
\end{figure}
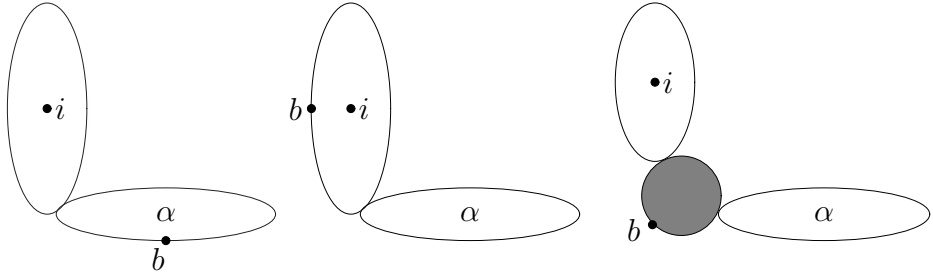 

From the previous lemma and its proof, we have:
\begin{corollary} 
\label{cor:boundarymapstoL} 
Suppose $\partial \cM_1(\alpha) =\emptyset$.  The
boundary $\partial \cM_{1,1}^{\rm rel}(\halpha)$
maps to $L$ under the interior evaluation map
$\ev^{\rm (i)} \colon \cM_{1,1}^{\rm
  rel}(\halpha)\to M$.
\end{corollary}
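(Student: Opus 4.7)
The plan is to apply Lemma \ref{lem:universaldisc} fibrewise to the map $\frf = (\frf_1, \frf_2) \colon \cM_{1,1}^{\rm rel}(\halpha) \to \cM_1(\alpha) \times S^1$ introduced in \eqref{eq:forget-udisc}. Under the hypothesis $\partial \cM_1(\alpha) = \emptyset$, every stable map $[u,\Sigma,x] \in \cM_1(\alpha)$ has $\Sigma$ with exactly one disc component (by the boundary description of \cite[\S 7.1.1]{FuOh-La09}), so Lemma \ref{lem:universaldisc} applies and identifies each fibre $\frf^{-1}([u,\Sigma,x],z)$ with the oriented real blow-up $\hSigma$ of $\Sigma$ at $x$.

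The first step is to localize where $\partial \cM_{1,1}^{\rm rel}(\halpha)$ can live. Since $\cM_1(\alpha)\times S^1$ has empty boundary by the standing hypothesis, and $\frf$ is a tautological family of stable discs (in particular proper and fibrewise-compact), the boundary of the total space is captured by the fibrewise boundary. The fibre boundary $\partial \hSigma$ is the preimage of $\partial \Sigma$ under the natural projection $\hSigma \to \Sigma$; combinatorially it is the union of $\partial \Sigma \setminus \{x\}$ with the exceptional interval $[0,\pi]$ inserted over $x$.

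The second step is a case-by-case check using the explicit construction from the proof of Lemma \ref{lem:universaldisc}. The three types of boundary points correspond precisely to the three configurations in Figure \ref{fig:boundarypoints}:
\begin{enumerate}
\item If $p \in \partial \hSigma$ projects to $p' \in \partial \Sigma \setminus \{x\}$, the constructed stable map attaches a new disc with $\hu|_\D(w) = (u(p'), zw)$ and interior marking $0\in \D$, so $\ev^{\rm (i)} = u(p')$.
\item If $p$ lies in the interior of the exceptional locus $(0,\pi)$, the new disc is attached at $x$ and $\hu|_\D(w) = (u(x), e^{-2i\theta} zw)$, so the interior marking at $0\in \D$ evaluates to $u(x)$.
\item If $p$ corresponds to an endpoint $0$ or $\pi$ of $[0,\pi]$, the construction inserts two discs $\D^{(1)} \cup \D^{(2)}$ at $x$, with interior marking $0\in \D^{(1)}$ and $\hu|_{\D^{(1)}}(0) = (u(x),0)$, so again $\ev^{\rm (i)} = u(x)$.
\end{enumerate}
In each of these cases, the $M$-component of the interior evaluation map equals $u(p')$ or $u(x)$ for some point in $\partial \Sigma$, which lies in $L$ since $u(\partial \Sigma) \subset L$. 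This yields the conclusion.

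The only real subtlety — and the step most likely to need care — is the localization in the first step: one must confirm that $\partial \cM_{1,1}^{\rm rel}(\halpha)$ receives no contributions from strata where the underlying disc map already bubbles (which would contribute to $\partial \cM_1(\alpha)$), and this is exactly why the hypothesis $\partial \cM_1(\alpha) = \emptyset$ is invoked. Everything else is bookkeeping with the explicit formulas recalled in the proof of Lemma \ref{lem:universaldisc}.
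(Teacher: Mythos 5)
Your proposal is correct and follows essentially the same route as the paper: the corollary is deduced directly from Lemma \ref{lem:universaldisc} and the explicit boundary constructions in its proof, where in each of the three boundary configurations of Figure \ref{fig:boundarypoints} the new interior marking evaluates to $u(p)$ or $u(x)$ with $p, x \in \partial\Sigma$, hence to a point of $L$. Your extra remark on localizing the boundary over the fibrewise boundary (using $\partial\cM_1(\alpha)=\emptyset$, so every element of $\cM_1(\alpha)$ has a single disc component) is exactly the implicit step the paper relies on, so there is no gap.
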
 

\begin{corollary} 
\label{cor:boundary-fibreproduct-empty} 
Suppose $\partial \cM_1(\alpha) = \emptyset$ and
$\ev(\cM_{\rm S}(\sigma)) \cap L = \emptyset$.
Then the fibre product $\cM_{\rm S}(\sigma)
\times_M \cM_{1,1}^{\rm rel}(\halpha)$ has no
boundary.  In particular, the virtual fundamental
cycle $\ev_*[\cM_{\rm S}(\sigma) \times_M
\cM_{1,1}^{\rm rel}(\halpha)]^{\rm vir}$ is
well-defined (see \cite[Lemma A.1.32]{FuOh-La09}).
\end{corollary}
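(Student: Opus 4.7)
The plan is to decompose the boundary of the fibre product according to which factor is responsible, then to rule out each possible contribution using the two hypotheses. First I would observe that $\cM_{\rm S}(\sigma)$ parametrises closed genus-zero stable maps to the closed manifold $E$ with an interior marking constrained to a fibre $M\subset E$, so its Kuranishi structure has no boundary. Consequently, any boundary stratum of the fibre product $\cM_{\rm S}(\sigma)\times_M \cM_{1,1}^{\rm rel}(\halpha)$ must come from the boundary of the second factor, and the standard boundary formula for a fibre product gives
\[
\partial\bigl(\cM_{\rm S}(\sigma)\times_M \cM_{1,1}^{\rm rel}(\halpha)\bigr)
= \cM_{\rm S}(\sigma)\times_M \partial\cM_{1,1}^{\rm rel}(\halpha),
\]
with both fibre products formed via the interior evaluation maps to $M$.

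Next I would feed in the two hypotheses. The assumption $\partial\cM_1(\alpha)=\emptyset$, together with Corollary \ref{cor:boundarymapstoL}, forces $\ev^{\rm (i)}\bigl(\partial\cM_{1,1}^{\rm rel}(\halpha)\bigr)\subset L$, whereas the assumption $\ev(\cM_{\rm S}(\sigma))\cap L=\emptyset$ says the evaluation on the first factor avoids $L$ entirely. Hence the two maps to $M$ whose fibre product we would take have disjoint images, so their fibre product is empty and $\partial\bigl(\cM_{\rm S}(\sigma)\times_M \cM_{1,1}^{\rm rel}(\halpha)\bigr)=\emptyset$. The final conclusion, that the virtual fundamental cycle is well-defined, then follows at once from \cite[Lemma A.1.32]{FuOh-La09}.

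The main technical point to pin down carefully is the Kuranishi-theoretic meaning of the boundary of a fibre product. For this I would rely on the weak submersiveness of $\ev\colon\cM_1(\sigma)\to E$ already arranged in \S \ref{subsec:Seidel}, which makes $\cM_{\rm S}(\sigma)=\cM_1(\sigma)\times_E M$ a Kuranishi space, and which in turn allows $\cM_{\rm S}(\sigma)\times_M \cM_{1,1}^{\rm rel}(\halpha)$ to inherit a Kuranishi structure whose boundary decomposes as above. Beyond this formal verification I do not foresee any real obstacle, since the geometric content of the corollary is really the set-theoretic disjointness argument in the second paragraph.
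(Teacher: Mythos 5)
Your proposal is correct and matches the paper's (implicit) argument: the corollary is stated as an immediate consequence of Corollary \ref{cor:boundarymapstoL}, with exactly the reasoning you give — the first factor has no boundary, so $\partial\bigl(\cM_{\rm S}(\sigma)\times_M \cM_{1,1}^{\rm rel}(\halpha)\bigr)=\cM_{\rm S}(\sigma)\times_M \partial\cM_{1,1}^{\rm rel}(\halpha)$, which is empty because $\ev^{\rm (i)}$ sends $\partial\cM_{1,1}^{\rm rel}(\halpha)$ into $L$ while $\ev(\cM_{\rm S}(\sigma))$ avoids $L$, and then \cite[Lemma A.1.32]{FuOh-La09} applies. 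Your added remark on weak submersiveness of the evaluation map is consistent with the paper's setup in \S\ref{subsec:Seidel}.
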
 

%

We proceed to calculate the cycle $\ev_*[\cM_{\rm
  S}(\sigma) \times_M \cM_{1,1}^{\rm
  rel}(\halpha)] ^{\rm vir}$ under the assumption
of Corollary
\ref{cor:boundary-fibreproduct-empty}.  By
Corollary \ref{cor:boundarymapstoL}, taking a
sufficiently small perturbation, we get a virtual
fundamental \emph{chain}
\[
\cP_\alpha := 
(\ev^{\rm (i)} \times \ev^{\rm (b)})_*
[\cM_{1,1}^{\rm rel}(\halpha)]^{\rm vir}  
\]
whose boundary lies in $\nu(L)\times L\times S^1$,
where $\nu(L)\subset M$ is an arbitrarily small
tubular neighbourhood of $L$.  In other words,
$\cP_\alpha$ defines a \emph{relative} homology
class of the pair $(M\times L\times S^1, \nu
(L)\times L\times S^1)$ whose dimension is
$n+1+\mu(\alpha)$ (where $n = \dim_\C M$).  On the
other hand, since $\ev(\cM_{\rm S}(\sigma)) \cap L
= \emptyset$, taking a sufficiently small
perturbation again, we obtain a virtual cycle
$\ev_*[\cM_{\rm S}(\sigma)]^{\rm vir}$ in
$M\setminus \nu(L)$.  By Poincar\'{e}-Lefschetz
duality this defines a class
\begin{equation} 
\label{eq:hcS}
\hcS_\sigma := \PD \left( \ev_*[\cM_{\rm S}(\sigma)]^{\rm vir} \right) 
\in H^{\mu(\alpha)-\mu(\beta)}(M, L).
\end{equation} 
Here we used the virtual dimension formula
\eqref{eq:vdim-Seidel} and
\eqref{eq:Maslov+vChern}.  (Note that we put
``hat" to distinguish $\hcS_\sigma \in H^*(M,L)$
from the element $\cS_\sigma\in H^*(M)$ appearing
in \eqref{eq:cS}.)  The virtual cycle of the fibre
product can be evaluated as the pairing of the two
classes:
\begin{equation} 
\label{eq:virtualclass-product} 
\ev_*[\cM_{\rm S}(\sigma) 
\times_M \cM_{1,1}^{\rm rel}(\halpha)]^{\rm vir} 
= \langle \hcS_\sigma, \cP_\alpha \rangle  
\end{equation} 
where $\lan \cdot, \cdot \ran$ is 
the canonical pairing between relative cohomology 
and homology (with K\"{u}nneth decomposition): 
\begin{align*} 
& H^{\mu(\alpha)-\mu(\beta)}(M, \nu(L)) \otimes 
H_{n+1+\mu(\alpha)}(M\times L\times S^1, \nu(L)
\times L\times S^1) \\  
& \quad \longrightarrow 
H^{\mu(\alpha)-\mu(\beta)}(M, \nu(L)) 
\otimes 
H_{\mu(\alpha)-\mu(\beta)}(M,\nu(L)) \otimes 
H_{n+1+\mu(\beta)}(L\times S^1) \\ 
& \quad \longrightarrow H_{n+1+\mu(\beta)}(L\times S^1). 
\end{align*} 
%
We now compute the class \(\cP_\alpha\) in
terms of the class \([\cM_1(\alpha)^{\rm vir}]\). For this,
consider the diagram 
\begin{equation}\label{eq:rel}
\begin{CD} 
\cM_{1,1}^{\rm rel}(\halpha) @>{\frf}>> \cM_1(\alpha) \times S^1 
@>{\ev \times \id}>> L\times S^1 \\ 
@V{\ev^{\rm (i)}}VV @. \\ 
M @. 
\end{CD} 
\end{equation}
where $\frf$ is given in \eqref{eq:forget-udisc}.
The composition of the horizontal arrows is the
boundary evaluation map $\ev^{\rm (b)}$.  As we
saw in Lemma \ref{lem:universaldisc}, $(\frf,
\ev^{\rm (i)})$ can be viewed as a universal
family of bordered stable maps of class $\alpha$.

\begin{lemma} 
Suppose $\partial \cM_1(\alpha)= \emptyset$ 
and $\alpha \neq 0$. 
Then the relative homology class $\cP_\alpha$ is given by 
\begin{equation} 
\label{eq:Palpha} 
\cP_\alpha = \alpha \otimes \left( 
\ev_*[\cM_1(\alpha)]^{\rm vir} \times [S^1] \right) 
\end{equation} 
in $H_*(M\times L\times S^1, \nu(L)\times L\times S^1) 
\cong H_*(M,L)\otimes H_*(L\times S^1)$. 
\end{lemma}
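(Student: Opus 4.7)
The plan is to compute $\cP_\alpha$ by pushing forward in stages along the forgetful map
\[
\frf = (\frf_1, \frf_2) \colon \cM_{1,1}^{\rm rel}(\halpha) \to \cM_1(\alpha) \times S^1
\]
of \eqref{eq:forget-udisc}. Under the hypothesis $\partial \cM_1(\alpha) = \emptyset$, Lemma \ref{lem:universaldisc} identifies every fibre of $\frf$ with the oriented real blow-up $\hSigma$ at the boundary marking of the (unique) disc component of the underlying bordered stable map, so $\frf$ becomes an honest family over $\cM_1(\alpha) \times S^1$ with oriented two-dimensional fibres.

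I would then record two fibrewise observations. Over $([u,\Sigma,x], z) \in \cM_1(\alpha) \times S^1$, the interior evaluation $\ev^{\rm (i)}$ restricted to the fibre is the composition $\hSigma \to \Sigma \xrightarrow{u} M$, whose image is the disc $u(\Sigma)$ with boundary in $L$; as a relative $2$-chain in $(M,\nu(L))$ this represents the class $\alpha \in H_2(M,L) = H_2(M,\nu(L))$, independently of the fibre. Simultaneously, the boundary evaluation $\ev^{\rm (b)}$ is constant along each fibre and equals $(u(x), z)$: inspecting the four cases in the proof of Lemma \ref{lem:universaldisc}, the (possibly repositioned) boundary marking of $\hu$ is always sent to $(u(x), z)$. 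Hence $\ev^{\rm (b)} = (\ev \times \id_{S^1}) \circ \frf$, where $\ev \colon \cM_1(\alpha) \to L$ is the usual boundary evaluation on $\cM_1(\alpha)$.

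Granting that the Kuranishi structure on $\cM_{1,1}^{\rm rel}(\halpha)$ inherited from $\cM_{1,1}(\halpha)$ via \eqref{eq:M11rel} is compatible with the family structure of $\frf$, fibre integration against the canonical orientation of $\hSigma$ yields
\[
\frf_*[\cM_{1,1}^{\rm rel}(\halpha)]^{\rm vir} = [\cM_1(\alpha)]^{\rm vir} \times [S^1].
\]
Combining this with the fibrewise descriptions of $\ev^{\rm (i)}$ and $\ev^{\rm (b)}$ and applying the K\"{u}nneth decomposition $H_*(M \times L \times S^1, \nu(L) \times L \times S^1) \cong H_*(M,\nu(L)) \otimes H_*(L\times S^1)$, we conclude
\[
\cP_\alpha
= (\ev^{\rm (i)} \times \ev^{\rm (b)})_*[\cM_{1,1}^{\rm rel}(\halpha)]^{\rm vir}
= \alpha \otimes \bigl(\ev_*[\cM_1(\alpha)]^{\rm vir} \times [S^1]\bigr).
\]

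The main obstacle I expect is the technical compatibility of Kuranishi structures underlying the fibre integration step. Concretely, one must verify that the Kuranishi data on $\cM_{1,1}^{\rm rel}(\halpha)$ coming from the fibre-product presentation \eqref{eq:M11rel} agree, at least up to cobordism preserving the evaluation maps, with Kuranishi data adapted to the smooth-fibre family $\frf$. This is in the spirit of the forgetful and gluing compatibilities developed in \cite[\S 7]{FuOh-La09}, but spelling it out here requires care. A secondary (bookkeeping-heavy) verification is the matching of orientations: the natural orientation of $\hSigma$, combined with the fixed relative spin structure on $L$, should reproduce the standard orientation on $[\cM_{1,1}^{\rm rel}(\halpha)]^{\rm vir}$.
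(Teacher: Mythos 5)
Your skeleton is the same as the paper's: use the map $\frf$ of \eqref{eq:forget-udisc}, identify its fibres with the oriented real blow-up $\hSigma$ via Lemma \ref{lem:universaldisc}, observe that $\ev^{\rm (b)}=(\ev\times\id_{S^1})\circ\frf$ while $\ev^{\rm (i)}$ restricted to each fibre represents $\alpha$ in $H_2(M,\nu(L))$, and conclude by a K\"unneth argument. This is exactly the paper's opening move, which it summarizes as ``ignoring the issues on virtual chains, the lemma follows from this diagram.'' Your fibrewise observations (constancy of $\ev^{\rm (b)}$ on fibres, including the repositioned boundary markings in the blow-up boundary strata) are correct.

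However, the step you phrase as ``granting that the Kuranishi structure \dots is compatible with the family structure of $\frf$'' is not a routine citation of forgetful-map compatibilities; it is the entire content of the paper's proof, and it does not follow from the fibre-product presentation \eqref{eq:M11rel} alone, since the structure induced that way has no a priori relation to $\frf$. The paper establishes the needed statement by an explicit construction: it builds Kuranishi neighbourhoods of $\frf^{-1}(\frr_0\times S^1)$ as tautological families of real-blown-up curves over the Kuranishi neighbourhoods $V\times S^1$ of $\cM_1(\alpha)\times S^1$, with obstruction spaces taken purely in the $M$-direction (parallel transports of the $\E_\fra\subset C^\infty_c(\Int K, u_0^*TM\otimes\Lambda^{0,1})$ used downstairs); it then verifies the extra transversality in \eqref{eq:Knbhd-upstairs}, namely Fredholm regularity of the degree-one component $v=\pi_{\P^1}\circ\hu\colon(\tSigma,\partial\tSigma)\to(\P^1,S^1)$ via the vanishing of $H^1(\tSigma,(v^*T\P^1,v^*TS^1))$ (normalization sequence plus Cho--Oh), and transversality of the relative constraint $\hu(p)\in M\times\{0\}$ via transitivity of the $\Aut(\D)$-action on $\Int(\D)$; finally it pulls back the multisections used to define $[\cM_1(\alpha)]^{\rm vir}$, so that the perturbed zero set of $\cM_{1,1}^{\rm rel}(\halpha)$ literally fibres over that of $\cM_1(\alpha)\times S^1$ with fibres of class $\alpha$. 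Note also that your intermediate formula $\frf_*[\cM_{1,1}^{\rm rel}(\halpha)]^{\rm vir}=[\cM_1(\alpha)]^{\rm vir}\times[S^1]$ would not by itself suffice, because $\ev^{\rm (i)}$ is not constant on fibres: to compute the joint pushforward $(\ev^{\rm (i)}\times\ev^{\rm (b)})_*$ one needs precisely this chain-level fibre-bundle structure, which is what must be proved. So the proposal is the right strategy, but the deferred verification is the proof, not an appendix to it.
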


Notice that if one ignores the technical details on the
construction of virtual chains, as well as the
expected functoriality relating the respective chains on
these spaces, the lemma follows directly from Diagram
\eqref{eq:rel}.

\begin{proof} 
We briefly overview the steps towards the proof of this result. 
We first recall the definition of Kuranishi structure and 
the construction of Kuranishi neighbourhoods for $\cM_1(\alpha)$. 
Roughly speaking, a Kuranishi neighbourhood $V$ of 
$\frr_0\in \cM_1(\alpha)$ consists of smooth maps 
from stable discs to $(M,L)$ which are sufficiently ``close'' to $\frr_0$.  
It is equipped with an obstruction bundle $E \to V$ 
satisfying a transversality condition with respect to the Cauchy-Riemann operator, 
together with a section $s$ of $E$ such that a neighbourhood of $\frr_0$ 
in $\cM_1(\alpha)$ is homeomorphic to $s^{-1}(0)\subset V$. 
As a next step, we describe how a Kuranishi neighbourhood 
$V$ for $\cM_1(\alpha)$ induces a Kuranishi neighbourhood 
$\hV$ for $\cM_{1,1}^{\rm rel}(\halpha)$. 
Following exactly the method of Lemma \ref{lem:universaldisc}, 
for a given smooth map $u \colon \Sigma \to M$ in 
the Kuranishi neighbourhood $V$ \emph{and} 
a point of $\hSigma\times S^1$, 
we can canonically construct a smooth map 
$\hu \colon \tSigma \to M\times \P^1$ 
which is holomorphic in the $\P^1$-direction, 
where $\tSigma$ is obtained from $\Sigma$ 
by possibly adding disc or sphere bubbles. 
This constructs a Kuranishi neighbourhood $\hV$ 
for $\cM_{1,1}^{\rm rel}(\halpha)$ which is a fibration 
over $V\times S^1$ with fibres stable discs.   
The key point is that we can take an obstruction bundle over $\hV$ 
to be the \emph{pull-back} of the obstruction bundle $E$ over $V$.  
This allows us to choose the virtual chain of $\cM_{1,1}^{\rm rel}(\halpha)$ 
to be ``fiber bundle'' over a virtual cycle of $\cM_1(\alpha) 
\times S^1$ with fibre the corresponding stable discs, and 
the conclusion of the lemma follows.

\emph{Kuranishi structure on $\cM_1(\alpha)$.}
We refer the reader to \cite[\S 7.1]{FuOh-La09}, \cite[Part 3,
4]{FOOO-technical12} for a detailed description.  
Recall \cite[Definition A1.1]{FuOh-La09} that a
\emph{Kuranishi neighbourhood} of a point $\frr_0
\in \cM_1(\alpha)$ is a tuple $(V,E,\Gamma,\psi,
s)$ where
\begin{itemize} 
\item $V$ is a finite dimensional manifold
  (possibly with boundary and corner);
\item $E$ is a finite dimensional real vector space; 
\item $\Gamma$ is a finite group; it acts on $V$ smoothly 
and effectively and on $E$ linearly; 
\item $s$ is a smooth $\Gamma$-equivariant map 
$V \to E$; 
\item $\psi$ is a homeomorphism between $s^{-1}(0)/\Gamma$ 
and an open neighbourhood of $\frr_0$ in $\cM_1(\alpha)$. 
\end{itemize} 
Every point of $\cM_1(\alpha)$ is equipped with a certain 
Kuranishi neighbourhood, and these Kuranishi neighbourhoods 
are related by certain co-ordinate changes 
\cite[Definition A.1.3]{FuOh-La09} 
and $\cM_1(\alpha)$ becomes a Kuranishi space (a space
endowed with a Kuranishi structure) 
\cite[Definition A.1.5, Proposition 7.1.1]{FuOh-La09}.
The Kuranishi neighbourhoods are cut-off from maps 
satisfying the Cauchy-Riemann equation modulus a finite
dimensional obstruction. Such construction depends on
several parameters, as we now describe in the case at hand.


Let
$(u_0\colon \Sigma_0 \to M, x_0 \in \partial
\Sigma_0)$ be a marked bordered stable map
representing $\frr_0\in \cM_1(\alpha)$.  
The finite group $\Gamma$ is given by the set of
holomorphic automorphisms $\varphi \colon \Sigma_0
\to \Sigma_0$ such that $u_0 \circ \varphi = u_0$
and $\varphi(x_0) = x_0$. Since $(\Sigma_0,x_0)$ has
only one marking, it is unstable if we forget the
map $u_0$.  We add 
interior markings $w_{0,1}, \dots, w_{0,l}$ in $\Sigma_0$ so that
$(\Sigma_0, x_0, \{w_{0,1},\dots,w_{0,l}\})$ is now stable.  
We require that the set $\{w_{0,1},\dots,w_{0,l}\}$ is preserved
by the $\Gamma$-action \cite[Definition 17.5]{FOOO-technical12}, 
so that \(\Gamma\) permutes the indices. 
We can therefore regard \(\Gamma\) as a subgroup 
of the symmetric group $\frS_l$.  
We take real codimension 2 submanifolds $Q_1,\dots,
Q_l$ of $M$ such that $u_0$ intersects
$Q_i$ transversely at $w_{0,i}$ (so $u_0$ is
necessarily an immersion at $w_{0,i}$); moreover
we require that $Q_i = Q_{\sigma(i)}$ for every
permutation $\sigma\in \Gamma \subset \frS_l$.
Let $\cM_{1,l}$ denote the moduli space of
genus-zero stable bordered Riemann surfaces with
one boundary and $l$ interior markings and let
$\fra_0\in \cM_{1,l}$ be the point represented by
$(\Sigma_0,x_0,\{w_{0,1},\dots,w_{0,l}\})$.  The
group $\Gamma$ acts on $\cM_{1,l}$ by permutation
of the $l$ interior markings and $\fra_0$ is fixed by
$\Gamma$.  Let $N\subset \cM_{1,l}$ be a
$\Gamma$-invariant small open
neighbourhood of $\fra_0$. Fukaya-Oh-Ohta-Ono
\cite[Section 16]{FOOO-technical12} constructed $N$ in two
steps. First they considered a subset
$\mathfrak{V}\subset \cM_{1,l}$ consisting of
deformations of $\fra_0$ having the same combinatorial type,
i.e. same associated dual graph as $\fra_0$. Then, they introduced 
smoothing (or gluing) near the nodes with parameters $T\in \R$ and 
$\theta\in S^1$. 
This constructs the neighbourhood $N$ of $\mathfrak{V}$. 
This construction yields a \(\Gamma\)-equivariant 
\emph{tautological} family $\cR \to N$ 
\cite[Lemma 16.9]{FOOO-technical12} 
of stable bordered Riemann surfaces, where the fibre at
\(\frb\in N\) corresponds to its underlying surface. 
Note that when $\gamma \cdot \frb = \frb'$ for $\frb,\frb' \in N$ 
and $\gamma \in \Gamma$, there is a canonical isomorphism 
between the underlying surfaces of $\frb$ and $\frb'$ 
which induces the permutation $\gamma$ of the interior markings; 
this defines the $\Gamma$-action on $\cR$. 
We take a $\Gamma$-invariant
closed subset $\cK \subset \cR$ such that the
fibre $K_0=\Sigma_0\cap \cK$ at $\fra_0$ is the
complement in $\Sigma_0$ of small neighbourhoods
of the nodes in $\Sigma_0$, and that the family $\cK
\to N$ is $C^\infty$-trivial.  
We choose a $\Gamma$-equivariant $C^\infty$-trivialization
$\cK \cong K_0 \times N$ which preserves the markings, 
i.e.~the section of the $i$th interior marking in $\cK$ 
corresponds to 
$\{w_{0,i}\} \times N \subset K_0\times N$.  
$\cK$ is called the \emph{core} and its complement is called 
the \emph{neck region} (
for further details see \cite[Definitions 16.2, 16.4, 16.6, 16.7]
{FOOO-technical12}.)
%

For a bordered Riemann surface $\Sigma$
appearing as a fibre of $\cR\to N$, the core
$K=\Sigma \cap \cK$ is identified with $K_0$ by
the given trivialization $\cK \cong K_0\times N$,
and thus $u_0$ induces a map $u_0 \colon K \to M$.
We consider an infinite dimensional space $\cU$
consisting of tuples $(u,\Sigma, x,
\{w_1,\dots,w_l\})$, where $(\Sigma, x,
\{w_1,\dots,w_l\})$ represents a point of $N$ and
$u \colon (\Sigma, \partial \Sigma) \to (M,L)$ is
a smooth map of degree $\alpha$ which is
``sufficiently close'' to $u_0$ in the sense that
(see \cite[Definitions 17.12,
18.10]{FOOO-technical12}) for an \(\varepsilon>0\)
\begin{itemize} 
\item $u$ is \(\varepsilon\)-close to $u_0$ in the
  $C^{10}$-topology on the core
  $K = \Sigma\cap \cK$;
\item $u$ is holomorphic on the neck region
  $\Sigma \setminus K$;
\item the diameter of the image of each connected component 
of the neck region under $u$ is smaller than \(\varepsilon\). 
\end{itemize} 
The group $\Gamma$ acts on $\cU$ by permutation of
interior marked points.  Next we choose an
obstruction bundle $\E$ over $\cU$ as follows (see
\cite[Definitions 17.7, 17.15]{FOOO-technical12}).
Take a $\Gamma$-equivariant smooth family of
finite dimensional subspaces $\E_\fra$
\[
\E_\fra \subset C^\infty_c ( \Int(K), u_0^*TM
\otimes \Lambda^{0,1})
\] 
parametrized by $\fra = (\Sigma, x,
\{w_1,\dots,w_l\})\in N$, where $K = \Sigma \cap
\cK$ is the core of $\Sigma$ and $\Lambda^{0,1}$
is the bundle of $(0,1)$-forms on $\Sigma$.  Then
we extend this family to the whole $\cU$ via
parallel transport, i.e.~for each point $\frr
=(u,\Sigma, x, \{w_1,\dots,w_l\})\in \cU$ over
$\fra = (\Sigma, x, \{w_1,\dots,w_l\}) \in N$, we
define
\[
\E_{\frr} \subset C^\infty_c(\Int(K), u^* TM
\otimes \Lambda^{0,1})
\]
as the parallel transport of $\E_{\fra}$ along
geodesics joining $u(y)$ and $u_0(y)$, for \(y\in \Int(K)\).
Here we
use a connection on $TM$ such that $TL$ is
preserved by parallel translation \cite[\S
11]{FOOO-technical12}.  By construction, the
bundle $\E \to \cU$ is $\Gamma$-equivariant.  The
\emph{Kuranishi neighbourhood} $V \subset \cU$ is cut
out by the equations:
\begin{align} 
\label{eq:Knbhd} 
\begin{split}
& \ol\partial u \equiv 0 \mod \E_{\frr} \\ 
& u(w_i) \in Q_i \quad i=1,\dots,l 
\end{split}  
\end{align} 
for $\frr = (u,\Sigma, x,\{w_1,\dots,w_l\}) \in
\cU$.  We need to choose $\E$ so that the
equations \eqref{eq:Knbhd} are transversal (see
below).  The $\Gamma$-action on $\cU$ preserves
$V$ and thus the obstruction bundle restricts to a
$\Gamma$-equivariant vector bundle $E = \E|_V$
over $V$.  The Cauchy-Riemann operator
$\ol{\partial}$ induces a section $s$ of $E \to V$
and $s^{-1}(0)/\Gamma$ gives a neighbourhood of
$\frr_0 \in \cM_1(\alpha)$.

The required transversality for \eqref{eq:Knbhd}
is stated as follows (see \cite[Lemmata 18.16,
20.7]{FOOO-technical12}).  For a smooth map $u
\colon (\Sigma, \partial \Sigma) \to (M,L)$, let
$L^2_{m,\delta}(\Sigma,\partial \Sigma; u^* TM,
u^*TL)$ denote a weighted Sobolev space, 
for $m$ sufficiently large and  \(\delta>0\), 
and consisting of $L^2_{m,\, {\rm loc}}$-sections of
$u^* TM$ which take values in $u^* TL$ along the
boundary $\partial \Sigma$, see \cite[Definitions
10.1, 19.8]{FOOO-technical12}.
Let $L^2_{m,\delta}(\Sigma, u^*TM\otimes
\Lambda^{0,1})$ denote a similar weighted Sobolev
space of sections of $u^* TM \otimes
\Lambda^{0,1}$ (see \cite[Definition
19.9]{FOOO-technical12}).  Let 
\[
D_{\frr} \ol{\partial} \colon L^2_{m+1,\delta}
(\Sigma, \partial \Sigma; u^* TM, u^* TL) \to
L^2_{m,\delta} (\Sigma, u^*TM \otimes
\Lambda^{0,1})
\]
denote the linearization of
$\ol{\partial}$ at $\frr = (u, \Sigma, x,
\{w_1,\dots, w_l\}) \in \cU$. 
We require
that $\Im (D_{\frr} \ol{\partial})$ and
$\E_{\frr}$ span $L^2_{m,\delta}(\Sigma, u^*
TM\otimes \Lambda^{0,1})$ for each $\frr \in
\cU$. (This is called ``Fredholm regularity".)
Let $\cM\subset \cU$ denote the subspace cut out
only by the first equation of \eqref{eq:Knbhd}.
Let $\ev_{\rm ad} \colon \cM \to M^{l}$ be the 
evaluation map at the $l$ additional markings. 
We also require $\ev_{\rm ad}$ to be transversal to 
$\prod_{i=1}^l Q_i \subset M^l$. 
Then $V = \ev_{\rm ad}^{-1}( \prod_{i=1}^l Q_i )$ is the
desired neighbourhood.   

\emph{Induced Kuranishi structure on \(\cM_{1,1}^{\rm
rel}(\halpha)\).} Recall that \(\frf\) is the forgetful morphism 
\(\cM_{1,1}^{\rm rel}(\halpha) \to \cM_1(\alpha) \times S^1\)   
 as in \eqref{eq:forget-udisc}.
We now construct a Kuranishi neighbourhood of
$\frf^{-1}(\frr_0 \times S^1) \subset
\cM_{1,1}^{\rm rel}(\halpha)$ from the Kuranishi
neighbourhood $(V,E,\Gamma,\psi,s)$ of $\frr_0\in
\cM_1(\alpha)$ above. We have
$\frf^{-1}(\frr_0 \times S^1) \cong \hSigma_0
\times S^1$ by Lemma \ref{lem:universaldisc},
where $\hSigma_0$ is the oriented real blow-up of
$\Sigma_0$ at $x_0$.  We perform this oriented
real blow-up in families.  The family $\cR \to N$
is equipped with a section $x\colon N \to \cR$
corresponding to the boundary marked point.  Let
$\hcR$ denote the oriented real-blow up along
 $x$.  The proof of Lemma
\ref{lem:universaldisc} shows that a point $p\in
\hcR$ parametrizes a marked stable bordered
Riemann surface\footnote {By abuse of notation, we
  denote by $p$ a point of $\hcR$ and at the same
  time a new interior marking on $\tSigma$.}
$(\tSigma, x, p, \{w_1,\dots,w_l\})$ with a new
interior marking $p$ (see also \cite[Lemma
7.1.45]{FuOh-La09}).  More precisely, letting
$p\in \hcR$ be on the blow-up of a fibre
$\Sigma\subset \cR$: if $p$ is neither a node nor
a boundary point, $\tSigma = \Sigma$; if $p$ is an
interior node, $\tSigma$ is obtained from $\Sigma$
by adding a sphere bubble at the node; if $p$ is a
boundary point, $\tSigma$ is obtained from
$\Sigma$ by adding at most two disc bubbles (see
Figure \ref{fig:boundarypoints}).  
It is possible that a new interior marking $p$ on $\tSigma$ 
coincides with one of the $w_i$'s.  
Let $\frR \to \hcR$ denote the corresponding family 
of marked stable bordered Riemann surfaces.
The $\Gamma$-action on $\hcR$ naturally lifts to 
that on the tautological family $\frR\to \hcR$. 
Let $p\in \hcR$ be a point on the blow-up $\hSigma$ 
of a fiber $\Sigma\subset \cR$ of $\cR \to N$ and 
let $\tSigma$ be the fibre of $\frR \to \hcR$ at $p\in \hcR$ as above. 
Then the core $K = \cK \cap \Sigma$ of $\Sigma$ induces 
a compact subset $\tK$ of $\tSigma$ which maps isomorphically 
onto $K$ under the natural map $\tSigma \to \Sigma$. 
(The set $\tK$ is like the strict transform of $K$.)
The union of these subsets $\tK$ gives a 
$\Gamma$-invariant subset $\frK \subset \frR$  
equipped with a $\Gamma$-equivariant 
$C^\infty$-trivialization $\frK \cong K_0\times \hcR$. 
Notice that $\Int(\frK)$ is disjoint from the components
contracted under $\tSigma \to \Sigma$. 

Let $\frU$ be the space of tuples
$(\hu, \tSigma, x, p, \{w_1,\dots,w_l\})$ where
$(\tSigma, x, p, \{w_1,\dots,w_l\})$ is a marked
bordered Riemann surface corresponding to a point
of $\hcR$ (i.e.~ arises as a fibre of $\frR \to
\hcR$) and $\hu \colon (\tSigma,\partial \tSigma)
\to (M\times \P^1, L\times S^1)$ is a smooth map
of class $\halpha$ which satisfies the following
conditions:
\begin{itemize} 
\item $\pi_M \circ \hu$ is $C^{10}$-close to $u_0$
  on $\tK = \tSigma \cap \frK$, where
  $\pi_M \colon M\times \P^1 \to M$ is the
  projection (since $\tK$ is identified with $K_0$
  via the given trivialization $\frK \cong
  K_0\times \hcR$, $u_0$ defines a map $u_0 \colon
  \tK \to M$);
\item $\hu$ is holomorphic on 
  $\tSigma \setminus \tK$;
\item the diameter of the image of each connected
  component of $\tSigma \setminus \tK$ under $\pi_M\circ
  \hu$ is small.
\end{itemize} 
The obstruction bundle $\E\to \cU$  induces an obstruction
bundle  $\bE \to \frU$ as follows.  Take an
element $\frs = (\hu, \tSigma,
x,p,\{w_1,\dots,w_l\}) \in \frU$ and let $\fra =
(\Sigma, x, \{w_1,\dots, w_l\}) \in N$ denote the
marked Riemann surface given by forgetting $p$ and
collapsing unstable components of the source.
Define the obstruction space at $\frs \in \frU$
\[
\bE_{\frs} \subset C^\infty_c(\Int(\tK),
(\pi_M\circ \hu)^* TM \otimes \Lambda^{0,1})
\subset C^{\infty}_c ( \Int(\tK), \hu^* T(M\times
\P^1) \otimes \Lambda^{0,1})
\]
(with $\tK = \tSigma \cap \frK$) to be the
parallel transport of $\E_\fra\subset
C^{\infty}_c(\Int(\tK), u_0^*TM \otimes
\Lambda^{0,1})$ along geodesics joining $u_0(y)$
and $(\pi_M \circ \hu)(y)$.  Let $C \subset
\tSigma$ be the union of the contracted components of $\tSigma
\to\Sigma$.  Because $\pi_M \circ \hu|_{\tK}$ is
sufficiently close to $u_0$, $\pi_M\circ \hu|_{\tSigma \setminus \tK}$ 
is holomorphic and $C \subset \tSigma \setminus \Int(\tK)$, 
by choosing a smaller neck region from the 
beginning if necessary (see ``extending the core"
\cite[Definition 17.21]{FOOO-technical12}), we may
assume that $\pi_M \circ \hu$ is constant on $C$
(since the symplectic area of $(\pi_M\circ \hu)(\tSigma \setminus \tK)$ 
has to be small).  Hence $\pi_M \circ \hu$ induces a
map $u \colon (\Sigma,\partial \Sigma) \to (M,L)$
belonging to $\cU$. Therefore we have a projection
$\frU \to \cU$ and $\bE$ is identified with the
pull-back of $\E$.  The group $\Gamma$ acts on
$\frU$ and $\bE$ and $\bE \to \frU$ is
$\Gamma$-equivariant.  The Kuranishi neighbourhood
$\hV$ for $\cM_{1,1}^{\rm rel}(\halpha)$ is cut
out from $\frU$ by the equations:
\begin{align} 
\label{eq:Knbhd-upstairs} 
\begin{split} 
& \ol{\partial} \hu \equiv 0 \mod \bE_\frs \\
& \hu(p) \in M\times \{0\} \\ 
& \hu(w_i) \in Q_i \times \P^1 \quad i=1,\dots, l
\end{split} 
\end{align} 
for $\frs = (\hu, \tSigma, x, p,
\{w_1,\dots,w_l\}) \in \frU$.  The second equation
of \eqref{eq:Knbhd-upstairs} corresponds to the
fibre product presentation \eqref{eq:M11rel} of
$\cM_{1,1}^{\rm rel}(\halpha)$.  

Let $\hcM\subset
\frU$ denote the subspace cut out by the first and
the second equations of \eqref{eq:Knbhd-upstairs}.
Consider the map $\frU \to \cU \times S^1$, where
the first factor is the projection we discussed
and the second factor is the evaluation map at the
boundary marking $x$ followed by the projection
$L\times S^1 \to S^1$.  We claim that $\hcM$ is a
tautological family of (blown-up) Riemann surfaces
over $\cM\times S^1$ under the map $\hcM \subset
\frU \to \cU\times S^1$.  (Recall that $\cM\subset
\cU$ is cut out by the first equation of
\eqref{eq:Knbhd}.)  More precisely, it is
identified with the restriction to $\cM\times S^1$
of the family $\pr^*\hcR \to \cU\times S^1$ where
$\pr \colon \cU\times S^1 \to N$ is the
natural projection.  By the choice of $\bE$, each
element $(\hu, \tSigma, x, p, \{w_1,\dots,w_l\})$
of $\hcM$ is holomorphic in the $\P^1$-factor and
its image $(u,\Sigma, x, \{w_1,\dots,w_l\})$ in
$\cU$ belongs to $\cM$. By the same argument as in
the proof of Lemma \ref{lem:universaldisc}, it
follows that $\hu$ is uniquely reconstructed from
$u\colon \Sigma \to M$, $p\in \hSigma$ and
$(\pi_{\P^1}\circ \hu)(x)\in S^1$.  This proves
the claim.  Cutting down the moduli space $\hcM$
by the third equation of
\eqref{eq:Knbhd-upstairs}, we obtain $\hV$ as a
tautological family of (blown-up) Riemann surfaces
over $V\times S^1$, with $V$ the Kuranishi
neighbourhood of $\frr_0 \in \cM_1(\alpha)$.  The
obstruction bundle $\hE = \bE|_{\hV}$ and its
section $\hs :=\ol{\partial}$ are the pull-backs
of $E\to V$ and $s = \ol{\partial}$
respectively. These data $(\hV, \hE,\hs)$ are
$\Gamma$-equivariant and give a Kuranishi
neighbourhood $(\hV,\hE, \Gamma, \hpsi, \hs)$ of
$\frf^{-1}(\frr_0\times S^1) \subset
\cM_{1,1}^{\rm rel}(\halpha)$.

We need to check the transversality of \eqref{eq:Knbhd-upstairs}.  
We first show the transversality of the $\ol{\partial}$-equation. 
Let $\hu \colon \tSigma \to M$ be a map in $\frU$ 
satisfying the first equation of \eqref{eq:Knbhd-upstairs}. 
Let $v := \pi_{\P^1} \circ \hu \colon (\tSigma,\partial \tSigma) 
\to (\P^1,S^1)$ be the vertical component of $\hu$, 
which is a holomorphic map of degree one. 
Let $\tu = \pi_M \circ \hu \colon (\tSigma,\partial \tSigma) 
\to (M,L)$ be the horizontal component of $\hu$. 
The image $u \colon (\Sigma,\partial\Sigma) \to (M,L)$ 
of $\hu$ in $\cU$ is obtained from $\tu$ by collapsing 
some components of $\tSigma$ on which $\tu$ is 
constant. 
It suffices to show that 
\begin{itemize} 
\item $v$ is Fredholm regular, i.e.~$D_v \ol{\partial}$ is surjective; and  
\item $\tu$ is Fredholm regular for $\E_{u}$, 
i.e.~$\Im (D_{\tu}\ol{\partial}) + \E_{u}  
= L_{m,\delta}^2(\tSigma, \tu^*(TM)\otimes \Lambda^{0,1})$. 
\end{itemize} 
The Fredholm regularity for $v$ can be rephrased as the vanishing 
of the sheaf cohomology (see \cite[\S 3.4]{KaLi-06}, 
\cite[\S 6]{ChOh-Fl06}):
\[
H^1(\tSigma, (v^* T\P^1, v^* TS^1))=0
\]
where $(v^* T\P^1, v^* TS^1)$ denotes the sheaf of
holomorphic sections of $v^* T\P^1$ which take
values in $v^* TS^1$ on $\partial \tSigma$. 
Let $\tSigma = \bigcup_i \Sigma_i$ 
be the decomposition into irreducible components 
and write $v_i = v|_{\Sigma_i}$. 
Then we have the following standard normalization 
sequence for sheaves on $\tSigma$: 
\[
0 \to (v^*T\P^1,v^*TS^1) \to 
\bigoplus_{i} (v_i^*T\P^1, v_i^*TS^1) 
\to \bigoplus_{x} T_{v(x)}\P^1
\oplus \bigoplus_{y} T_{v(y)}S^1 
\to 0 
\]
where $x$ ranges over interior nodes of $\tSigma$ 
and $y$ ranges over boundary nodes of $\tSigma$. 
Since we have $H^1(v_i^*T\P^1,v_i^*TS^1) = 0$ for 
each component $\Sigma_i$ by \cite[Lemma 6.4]{ChOh-Fl06}, 
it suffices to show that the map 
$\bigoplus_i H^0(v_i^*T\P^1,v_i^*TS^1) 
\to \bigoplus_x T_{v(x)}\P^1 \oplus \bigoplus_y T_{v(x)}S^1$ 
is surjective: this follows easily by induction on the number 
of components (removing degree-zero tails one by one). 
The Fredholm regularity of $\tu$ with respect to $\E$ follows from 
the assumed regularity for $u$ with respect to $\E$. 
For example, consider the case where $\tSigma = \Sigma \cup \D$. 
The obstruction space $\E_u$ is supported on the core $K \subset \Sigma$ 
and $\tu$ is constant on $\D$. Write $x = \tu(\D) \in M$. 
Let $(\xi_1,\xi_2)$ be an element of 
\[
L^2_{m,\delta}(\tSigma, \tu^*TM \otimes \Lambda^{0,1}) 
= L^2_{m,\delta}(\Sigma, u^*TM\otimes \Lambda^{0,1}) 
\oplus L^2_{m,\delta}(\D, T_x M\otimes \Lambda^{0,1}). 
\]
The assumed regularity implies that 
there exists  $\epsilon \in \E_u$ and 
$\nu_1 \in L^2_{m+1,\delta}(\Sigma,\partial \Sigma; u^*TM, u^*TL)$ 
such that $\xi_1 = (D_u\ol{\partial})\nu_1 + \epsilon$. 
The vanishing $H^1(T_xM, T_xL)=0$ of the sheaf 
cohomology on $\D$ implies that there exists 
$\nu_2 \in L^2_{m+1,\delta}(\D,\partial \D; T_xM, T_xL)$ 
such that $\ol{\partial} \nu_2 = \xi_2$. 
By adding a constant element in $T_xL$ to $\nu_2$, we may 
assume that $\nu_1$ and $\nu_2$ agree on the boundary 
node $\D \cap \Sigma$, and then we have 
$(\xi_1,\xi_2) = (D_{\tu}\ol{\partial}) (\nu_1,\nu_2) 
+ \epsilon$. This shows the regularity of $\tu$. 
The other cases are similar.


Let $\cN\subset \frU$ denote the moduli space cut out
only by the first equation of
\eqref{eq:Knbhd-upstairs}.  The holomorphic
automorphism group $\Aut(\D)$ acts on the target
$(M\times \P^1, L \times S^1)$ and also on the
moduli space $\cN$.  The transversality for the
second equation of \eqref{eq:Knbhd-upstairs}
follows from the fact that the $\Aut(\D)$-action
on $\Int(\D)$ is transitive.  The first and the
second equations of \eqref{eq:Knbhd-upstairs}
define the modui space $\hcM$.  The evaluation map
$\ev_{\rm ad} \colon \hcM \to (M\times \P^1)^l$ at
the markings $w_1,\dots,w_l$ is transversal to
$\prod_{i=1}^l (Q_i\times \P^1)$ by the
transversality assumption for the second equation of \eqref{eq:Knbhd}. 
The transversality for \eqref{eq:Knbhd-upstairs} is now proved. 

\emph{Comparison of virtual cycles.} A virtual
chain is defined by multi-valued perturbations
(multisections) of $s$ on Kuranishi neighbourhoods
which are compatible under co-ordinate changes, and it is
independent of the choice of the obstruction bundle
(see \cite[\S A1.1]{FuOh-La09}, \cite[Part
2]{FOOO-technical12}).  By the above construction
of Kuranishi neighbourhoods, and the hypothesis \(\partial
\cM_1(\alpha)= \emptyset\), 
we can define a
virtual cycle $[\cM_{1,1}^{\rm rel}(\halpha)]^{\rm
  vir}$ by pulling back multisections used to
define a virtual cycle $[\cM_1(\alpha)]^{\rm
  vir}$, and this is independent of choices. 
Then $[\cM_{1,1}^{\rm rel}(\halpha)]^{\rm vir}$ becomes a fibre bundle
over $[\cM_1(\alpha)]^{\rm vir}\times S^1$ with
fibre the corresponding stable bordered Riemann
surfaces.  Each fibre is of class $\alpha$ under
the interior evaluation map. The lemma follows.
\end{proof} 

Summarizing the discussion,  we obtain (see
\eqref{eq:alphazero},  
\eqref{eq:virtualclass-product}, \eqref{eq:Palpha}): 
\begin{lemma} 
\label{lem:rhs}
The virtual cycle $\ev_* [\cM_{\rm S}(\sigma)
\times_M \cM_{1,1}^{\rm rel}(\halpha)]^{\rm vir}$ 
is well-defined if one of the following holds: 
\begin{itemize} 
\item[(a)] $\cM_{\rm S}(\sigma)=\emptyset$ or; 

\item[(b)] $\cM_1(\alpha)=\emptyset$ or; 

\item[(c)] $\partial \cM_1(\alpha) =\emptyset$ and 
$\ev(\cM_{\rm S}(\sigma)) \cap L = \emptyset$. 
\end{itemize} 
When one of the above conditions holds, we have:  
\[
\ev_* [\cM_{\rm S}(\sigma) \times_M \cM_{1,1}^{\rm
  rel}(\halpha)]^{\rm vir} = \begin{cases}
  (\cS_{\sigma} \cap [L]) \times [S^1] & \text{if
  } \alpha=0
  \text{ (then (b) holds);} \\
  \langle \hcS_\sigma, \alpha \rangle
  \ev_*[\cM_1(\alpha)]^{\rm vir}
  \times [S^1] & \text{if  (c) holds;} \\
  0 & \text{if $\alpha \neq 0$ and (a) or (b)
    holds.}
\end{cases} 
\]
\end{lemma}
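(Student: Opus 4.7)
The proof is a case analysis that assembles the pieces established in the preceding discussion. I would first address well-definedness. If either (a) or (b) holds, the fibre product is empty: in case (a) this is immediate, and in case (b) it follows because the same graph construction used for $\cM_1(\hbeta)$ in \S\ref{subsubsec:lhs} shows that $\cM_{1,1}^{\rm rel}(\halpha)$ is non-empty iff $\cM_1(\alpha)$ is. In case (c), well-definedness is precisely Corollary \ref{cor:boundary-fibreproduct-empty}.

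Next I would treat the three formulas in order. When $\alpha \neq 0$ and (a) or (b) holds, both sides are zero because the virtual cycle of the empty moduli space vanishes. When $\alpha = 0$, the moduli space $\cM_{1,1}^{\rm rel}(\hat 0)$ consists of constant disc sections with $\ev^{\rm (b)}\colon \cM_{1,1}^{\rm rel}(\hat 0) \cong L\times S^1$, and the formula $(\cS_\sigma \cap [L]) \times [S^1]$ is exactly the identity \eqref{eq:alphazero}. For the main case (c) with $\alpha \neq 0$, I would substitute the description \eqref{eq:Palpha} of the relative class
\[
\cP_\alpha = \alpha \otimes \bigl( \ev_*[\cM_1(\alpha)]^{\rm vir} \times [S^1] \bigr)
\]
into the pairing formula \eqref{eq:virtualclass-product}. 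The K\"{u}nneth decomposition then factors out $\langle \hcS_\sigma, \alpha \rangle$ and leaves exactly the claimed expression. A quick dimensional sanity check using the virtual dimension formula \eqref{eq:vdim-Seidel} together with \eqref{eq:Maslov+vChern} confirms the pairing lands in degree $n+1+\mu(\beta)$, as it should for a class on $L \times S^1$.

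The genuine work behind this lemma was already carried out above: namely, the construction of $\cP_\alpha$ via the Kuranishi-theoretic fibre-bundle description of $\frf \colon \cM_{1,1}^{\rm rel}(\halpha) \to \cM_1(\alpha) \times S^1$ (using Lemma \ref{lem:universaldisc} and the pulled-back obstruction bundles), and the upgrade of the Seidel cycle to a relative class $\hcS_\sigma \in H^*(M,\nu(L))$ via Poincar\'{e}-Lefschetz duality under the hypothesis $\ev(\cM_{\rm S}(\sigma)) \cap L = \emptyset$. Once those inputs are accepted, the present lemma is a bookkeeping step, and the only mild subtlety is verifying that the perturbations of $[\cM_{\rm S}(\sigma)]^{\rm vir}$ and $[\cM_1(\alpha)]^{\rm vir}$ can be chosen compatibly so that the pairing \eqref{eq:virtualclass-product} really computes the virtual cycle of the fibre product — this is guaranteed by choosing both perturbations small enough that $\ev([\cM_{\rm S}(\sigma)]^{\rm vir})$ stays in $M\setminus\nu(L)$ while $\partial \cP_\alpha$ stays inside $\nu(L)\times L\times S^1$.
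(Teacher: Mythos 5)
Your proposal is correct and follows essentially the same route as the paper, which states this lemma precisely as a summary of the preceding discussion — i.e.\ it assembles \eqref{eq:alphazero}, Corollary \ref{cor:boundary-fibreproduct-empty}, \eqref{eq:virtualclass-product} and \eqref{eq:Palpha} exactly as you do, with the empty-moduli cases giving zero. One tiny caveat: the equivalence ``$\cM_{1,1}^{\rm rel}(\halpha)$ non-empty iff $\cM_1(\alpha)$ non-empty'' that you cite for case (b) holds only for $\alpha\neq 0$ (for $\alpha=0$ the space is $\cong L\times S^1$, nonempty even though $\cM_1(0)=\emptyset$), but since you treat $\alpha=0$ separately via \eqref{eq:alphazero}, where the fibre product is closed because $\cM_{\rm S}(\sigma)$ is, this does not affect the argument.
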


\subsubsection{Conjecture and expected results} 
\label{subsubsec:conjecture}
We now state our conjecture: 
\begin{conjecture}[Degeneration Formula] 
\label{conj:degeneration} 
Let $\beta \in H_2(M,L)$ be such that $\cM_1(\beta)=\emptyset$. 
Assume that every pair $(\sigma, \alpha)\in H_2^{\rm sec}(E) 
\times H_2(M,L)$ with 
$r(\hbeta) = \sigma + \halpha$  satisfies one of the 
three conditions (a), (b), (c)  in Lemma \ref{lem:rhs}. 
Then the degeneration formula \eqref{eq:degenerationformula} 
\[
\varphi_* \ev_*[\cM_1(\hbeta) ]^{\rm vir} =
\sum_{r(\hbeta) = \sigma + \halpha} \ev_*
[\cM_{\rm S} (\sigma)\times_M \cM^{\rm
  rel}_{1,1}(\halpha)]^{\rm vir}
\]
holds. 
This implies, by Lemmata \ref{lem:lhs} and \ref{lem:rhs}, that 
\begin{equation} 
\label{eq:degenerationformula-explicit} 
 \delta_{\beta,0} [L] 
= \sum_{\substack{(\sigma, \alpha) : \, 
r(\hbeta) = \sigma + \halpha, \ \alpha \neq 0\\ 
\text{satisfying (c) of Lemma \ref{lem:rhs}}}} 
 \langle \hcS_\sigma, \alpha \rangle
 \ev_*[\cM_1(\alpha)]^{\rm vir}  
 + \delta_{\partial \beta, \lambda}  
\sum_{\sigma: r(\hbeta) = \sigma + \hzero}
\cS_{\sigma} \cap [L] 
\end{equation} 
holds in $H_{n+\mu(\beta)}(L;\Q)$. 
Here $\cS_\sigma$, $\hcS_\sigma$ are defined in 
\eqref{eq:cS}, \eqref{eq:hcS} and $\lambda \in H_1(L)$ 
is the class of an $S^1$-orbit. 
\end{conjecture}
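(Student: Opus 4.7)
The plan splits the claim into two tasks: deriving the explicit identity \eqref{eq:degenerationformula-explicit} from the degeneration formula \eqref{eq:degenerationformula}, and establishing the degeneration formula itself.

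The first task is bookkeeping and follows directly from Lemmata \ref{lem:lhs} and \ref{lem:rhs}, as asserted in the conjecture. Under the assumption $\cM_1(\beta) = \emptyset$, Lemma \ref{lem:lhs} evaluates the left-hand side of \eqref{eq:degenerationformula} as $\delta_{\beta,0}[L\times S^1]$ (the pushforward by $\varphi$ is orientation-preserving since $\varphi$ is defined by the smooth $S^1$-action). For each pair $(\sigma,\alpha)$ with $r(\hbeta) = \sigma + \halpha$ satisfying one of (a)--(c), Lemma \ref{lem:rhs} evaluates the corresponding summand; the $\alpha \ne 0$ cases falling under (a) or (b) contribute zero, and the $\alpha = 0$ case arises only when $\partial\beta = \lambda$ by \eqref{eq:boundaryrelation}. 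Applying the K\"{u}nneth projection $H_{n+1+\mu(\beta)}(L\times S^1) \to H_{n+\mu(\beta)}(L)\otimes H_1(S^1)$ and reading off the component paired with the fundamental class of $S^1$ yields \eqref{eq:degenerationformula-explicit}.

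The substantive task is to establish \eqref{eq:degenerationformula} itself. The plan is to construct a relative moduli space $\cM_1(\hbeta;\cE/\C) \to \C$ parametrizing stable holomorphic disc sections of the family $\cE \to \Bl_{(0,0)}(\D\times\C) \to \C$ whose class retracts to $r(\hbeta)$. The fibre over $t\ne 0$ is $\cM_1(\hbeta)$, and the central fibre should decompose, after normalizing the nodal target along $M \subset \cE_0$, as the disjoint union $\bigsqcup_{r(\hbeta) = \sigma + \halpha} \cM_{\rm S}(\sigma) \times_M \cM_{1,1}^{\rm rel}(\halpha)$. The projection to $\C$ then supplies a cobordism of compact Kuranishi spaces between the general fibre and the central fibre, from which the equality of boundary-evaluation push-forwards follows. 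The twist by $\varphi$ on the left-hand side is forced because the continuous boundary trivialization $\partial\cE_t \cong M\times S^1$ of \eqref{eq:bdry-trivialization} disagrees with the trivialization of $\partial\cE_0$ induced by $\cE_0 = E \cup_M (M\times\D)$ by exactly the $S^1$-action (Remark \ref{rem:trivialization-difference}); the same discrepancy already accounts for the $\sigma_0 + \halpha_0$ shift in the retraction map of Proposition \ref{prop:retractionmap}.

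The principal obstacle is analytic. One must equip $\cM_1(\hbeta;\cE/\C)$ with a Kuranishi structure whose restriction to a general fibre is the standard one on $\cM_1(\hbeta)$, whose restriction to the central fibre reproduces the fibre-product Kuranishi structure built in \S \ref{subsec:degeneration}, and which controls the smoothing of nodes uniformly in $t$. The heart of the matter is gluing analysis at the interior node lying in $M\subset \cE_0$: given a stable pair in $\cM_{\rm S}(\sigma)\times_M \cM_{1,1}^{\rm rel}(\halpha)$ meeting at a point of $M$, one must produce a disc section over $\cE_t$ for small $t$ (as illustrated by the example at the end of \S \ref{subsec:relativehomology}), match obstruction bundles across the degeneration, and verify that no unexpected strata arise at $t=0$. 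Jun Li's algebraic degeneration formula supplies the template for closed curves, but adapting it in the Kuranishi framework with Lagrangian boundary conditions---in particular controlling boundary strata of $\cM_1(\hbeta;\cE/\C)$ coming from disc bubbling into $L\times S^1$, which the hypothesis that all $(\sigma,\alpha)$ satisfy (a), (b) or (c) is designed to rule out---is precisely why the authors only state this as a conjecture.
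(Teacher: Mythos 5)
Your derivation of \eqref{eq:degenerationformula-explicit} from \eqref{eq:degenerationformula} via Lemmata \ref{lem:lhs} and \ref{lem:rhs} (isolating the $\alpha=0$ term, which forces $\partial\beta=\lambda$, discarding the (a)/(b) cases, and stripping off the $[S^1]$ factor) is exactly the paper's own argument, and that implication is all the paper actually establishes, since \eqref{eq:degenerationformula} itself is stated only as a conjecture. You correctly refrain from claiming a proof of the formula itself; your relative-moduli/cobordism sketch, the $\varphi$-twist explained by Remark \ref{rem:trivialization-difference}, and the identification of the Kuranishi gluing analysis at the node in $M\subset\cE_0$ as the missing ingredient are consistent with why the authors leave it conjectural.
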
 

Note that the second term in the right-hand side 
of \eqref{eq:degenerationformula-explicit} arises from 
the case $\alpha=0$ 
(recall the discussion around \eqref{eq:alphazero}). 

In practice it is not easy to make all the assumptions here 
to be satisfied and to obtain a non-trivial result 
from \eqref{eq:degenerationformula-explicit}. 
Notice that the both-hand sides of \eqref{eq:degenerationformula-explicit} 
are zero unless $\mu(\beta)\le 0$ for dimensional reason. 
Also the term $\langle \hcS_\sigma, \alpha \rangle$ 
is zero unless $\hcS_\sigma \in H^2(M,L)$, i.e.\ 
$\lan c_1^{\rm vert}(E),\sigma \ran = -1$. 
Hence by \eqref{eq:Maslov+vChern}, 
the first term of the right-hand side 
is the sum over classes $\alpha$ satisfying $
\mu(\alpha) = \mu(\beta) +2$. 
This motivates the following (rather restrictive) assumption: 
\begin{assumption} 
\label{assump:moduli} 
(i) $\cM_1(\beta)$ is empty for all $\beta
\in H_2(M,L) $ with $\mu(\beta) \le 0$.  

(ii) The maximal fixed component $F_{\rm max}\subset M$ of 
the $\C^\times$-action (see \S \ref{subsec:Seidel}) 
is of complex codimension one and the $\C^\times$-weight 
on the normal bundle is $-1$. 

(iii) $c_1(M)$ is semi-positive. 

(iv) $\ev(\cM_{\rm S}(\sigma))$ is disjoint from $L$ 
for all $\sigma\in H_2^{\rm sec}(E)$ such that 
$\lan c_1^{\rm vert}(E), \sigma \ran = -1$.  
\end{assumption} 

We assume Assumption \ref{assump:moduli} 
in the rest of this section. 
Recall from Definition \ref{def:W-general} that 
open Gromov-Witten invariants $n_\alpha$ are defined 
when $\mu(\alpha)=2$ by the assumption (i) 
and so the potential function $W$ of $L$ is also defined. 
The role of the assumptions (ii) and (iii) is as follows. 
The assumption (ii) implies that 
$\lan c_1^{\rm vert}(E),\sigma_0\ran = -1$ 
for a maximal section $\sigma_0$.  
Note that by \eqref{eq:split-sigma0} 
$\cM_{\rm S}(\sigma)$ is empty unless 
$\sigma = \sigma_0+ d$ for some $d\in \NE(M)_\Z$. 
Therefore by (iii), $\cM_{\rm S}(\sigma)$ is empty 
unless 
$\lan c_1^{\rm vert}(E), \sigma \ran \ge -1$. 
This implies that the Seidel element $S$ in Definition 
\ref{def:seidel} is in 
$H^{\le 2}(M;\Q) \otimes \Q[\![\NE(E)_\Z]\!]$. 
\begin{definition} 
\label{def:Seidellift}
Under Assumption \ref{assump:moduli}, we can decompose the Seidel element as  
\[
S = q_0 \tS = q_0(\tS^{(0)} + \tS^{(2)}) 
\]
with $\tS^{(i)} \in H^i (M;\Q) \otimes \Q[\![\NE(M)_\Z]\!]$ 
and $q_0 = q^{\sigma_0}$.   
Furthermore, we can define a lift $\hS^{(2)}$ of $\tS^{(2)}$ 
as follows: 
\[
\hS^{(2)} := \sum_{\sigma:\, \langle c_1^{\rm vert}(E), \sigma \rangle 
= -1} \hcS_\sigma q^{\sigma-\sigma_0}  
\]
where $\hcS_\sigma\in H^2(M,L;\Q)$ (see \eqref{eq:hcS}) 
is well-defined by Assumption \ref{assump:moduli} (iv). 
The lift $\hS^{(2)}$ is an element of 
$H^2(M,L;\Q) \otimes \Q[\![\NE(M)_\Z]\!]$ 
which maps to $\tS^{(2)}$ under the natural map 
$H^2(M,L) \to H^2(M)$. 
\end{definition}

Under Assumption \ref{assump:moduli}, 
the conditions in Conjecture \ref{conj:degeneration} 
are satisfied for all $\beta$ with $\mu(\beta)=0$. 
In fact, if $r(\hbeta) = \sigma + \halpha$, then 
$\mu(\alpha) + 2 \lan c_1^{\rm vert}(E), \sigma\ran= 0$ 
by \eqref{eq:Maslov+vChern}, and thus 
\begin{itemize} 
\item 
if $\mu(\alpha) \le 0$ and $\lan c_1^{\rm vert}(E), \sigma \ran \ge 0$, then 
$\cM_1(\alpha) = \emptyset$ by the assumption (i); 
\item if $\mu(\alpha) \ge 4$ and $\lan c_1^{\rm vert}(E),\sigma \ran \le -2$, 
then
$\cM_{\rm S}(\sigma) = \emptyset$ by the assumptions (ii), (iii);  
\item 
if $\mu(\alpha)=2$ and $\lan c_1^{\rm vert}(E), \sigma \ran =-1$,
then $\cM_1(\alpha)$ has no boundary 
by the assumption (i) and $\ev(\cM_{\rm S}(\sigma)) \cap L = \emptyset$ 
by the assumption (iv).  
\end{itemize} 
Fix a class $\gamma \in H_1(L)$.  We now apply the
formula \eqref{eq:degenerationformula-explicit}
for $\beta$ with $\mu(\beta) =0$ and $\partial
\beta = \gamma+\lambda$.  In this case, 
\eqref{eq:degenerationformula-explicit} yields the
following equality in $H_n(L;\Q) \cong \Q$:
\begin{equation} 
\label{eq:formula-underassump} 
\delta_{\beta,0} = \sum_{\substack{
    (\sigma, \alpha): \, r(\hbeta) = \sigma+ \halpha 
     \\ 
    \mu(\alpha) =2, \ \langle c_1^{\rm vert}(E),
    \sigma \rangle =-1 
  }}
\langle \hcS_\sigma, \alpha \rangle n_{\alpha} 
+ \delta_{\partial \beta, \lambda} 
\sum_{\sigma: \, r(\hbeta) = \sigma + \hzero} 
\cS_\sigma 
\end{equation} 
where $n_\alpha$ is the open Gromov-Witten
invariant defined in Definition
\ref{def:W-general}.  Note that $\cS_\sigma$ in
the second term of the right-hand side lies in
$H^0(L;\Q) \cong \Q$.  
We consider a generating
function in the ``open" Novikov ring
$\Lambda^{\rm op}$ which was introduced before
Definition \ref{def:W-general}. 
We have a (not necessarily injective) homomorphism  from 
the ``closed" Novikov ring $\Lambda$ (see Remark \ref{rem:Seidelrep}) 
to the ``open" Novikov ring $\Lambda^{\rm op}$  
\[
\Lambda \to \Lambda^{\rm op},\quad 
q^d \mapsto z^d.  
\]
Thus $\Lambda^{\rm op}$ is a $\Lambda$-algebra. 
Note that $r(\hbeta) = \sigma +\halpha$ means 
\[
z^{\alpha_0+\beta} = q^{\sigma-\sigma_0} z^{\alpha} 
 \quad 
 \text{in $\Lambda^{\rm op}$} 
\]
by Proposition \ref{prop:retractionmap} 
where $\sigma_0$, $\alpha_0$ are maximal section/disc classes. 
We multiply the both-hand sides of \eqref{eq:formula-underassump} 
by $z^{\alpha_0+\beta} = q^{\sigma-\sigma_0} z^{\alpha}$ 
and sum over all $\beta$ with $\mu(\beta)=2$ and $\partial \beta =
\gamma + \lambda$. About the first term of the right-hand side, 
this summation boils down to the sum over all $(\sigma, \alpha)$ 
with $\lan c_1^{\rm vert}(E),\sigma \ran =-1$, 
$\mu(\alpha)=2$, $\partial \alpha =\gamma$ 
(see \eqref{eq:boundaryrelation}); 
about the second term of the right-hand side (which occurs 
when and only when $\gamma=0$), this boils down 
to the sum over all $\sigma$ with 
$\lan c_1^{\rm vert}(E), \sigma \ran=0$. 
Therefore we have:  

\begin{theorem} 
\label{thm:degeneration} 
Assume that the degeneration formula (Conjecture \ref{conj:degeneration}) 
and Assumption \ref{assump:moduli} hold for $(M,L)$.  
For any $\gamma \in H_1(L)$, we have 
\begin{equation} 
\label{eq:Seidel-potential} 
\delta_{\gamma+\lambda,0} z^{\alpha_0}= 
\langle \hS^{(2)}, dW_\gamma \rangle 
+ \delta_{\gamma,0} \tS^{(0)} 
\end{equation} 
in $\Lambda^{\rm op}$, 
where $\tS^{(0)}$ and $\hS^{(2)}$ are in Definition \ref{def:Seidellift}, 
$W_\gamma$ is in Definition \ref{def:W-gamma} 
and $dW_\gamma$ is its logarithmic derivative: 
\[
dW_\gamma := \sum_{\alpha\in H_2(M,L):\, 
\mu(\alpha)=2, \ \partial \alpha = \gamma}  
\alpha \otimes n_\alpha z^\alpha  
\in H_2(M,L) \otimes \Lambda^{\rm op}. 
\]
Recall that $\alpha_0$ is the maximal disc class introduced 
before Proposition \ref{prop:retractionmap} and 
$\lambda\in H_1(L)$ is the class of an $S^1$-orbit on $L$. 
\end{theorem}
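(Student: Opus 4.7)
The plan is to take the identity \eqref{eq:formula-underassump}, which is valid (under Conjecture \ref{conj:degeneration} and Assumption \ref{assump:moduli}) for every $\beta\in H_2(M,L)$ with $\mu(\beta)=0$ and $\partial\beta=\gamma+\lambda$, multiply both sides by the monomial $z^{\alpha_0+\beta}\in\Lambda^{\rm op}$, and sum over all such $\beta$. The three terms of \eqref{eq:formula-underassump} will then correspond respectively to the three terms of \eqref{eq:Seidel-potential}.

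First I would record the key substitution. By Proposition \ref{prop:retractionmap}, the retraction $r$ is a bijection between section classes and satisfies $r(\hbeta)=\sigma_0+\halpha_0+\hbeta$; hence $r(\hbeta)=\sigma+\halpha$ is equivalent, on the disc-part of the fibre product decomposition in Lemma \ref{lem:relativehomology}, to the identity $\alpha_0+\beta=(\sigma-\sigma_0)+\alpha$ in $H_2(M,L)$, where $\sigma-\sigma_0\in\NE(M)_\Z$ is viewed as a disc class via the inclusion $H_2(M)\hookrightarrow H_2(M,L)$. Pushing this through the ring homomorphism $\Lambda\to\Lambda^{\rm op}$, $q^d\mapsto z^d$, one obtains
\[
z^{\alpha_0+\beta}=q^{\sigma-\sigma_0}\,z^{\alpha}\quad\text{in }\Lambda^{\rm op}.
\]

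Next I would handle the left-hand side. The factor $\delta_{\beta,0}$ in \eqref{eq:formula-underassump} forces $\beta=0$, and the constraint $\partial\beta=\gamma+\lambda$ then forces $\gamma=-\lambda$; so summing $\delta_{\beta,0}\,z^{\alpha_0+\beta}$ over the allowed $\beta$ gives exactly $\delta_{\gamma+\lambda,0}\,z^{\alpha_0}$. For the first term on the right-hand side, the summation over all $\beta$ and all admissible $(\sigma,\alpha)$ with $r(\hbeta)=\sigma+\halpha$, $\mu(\alpha)=2$ and $\lan c_1^{\rm vert}(E),\sigma\ran=-1$ reindexes (via the bijection $r$) into an independent sum over $\sigma$ with $\lan c_1^{\rm vert}(E),\sigma\ran=-1$ and $\alpha$ with $\mu(\alpha)=2$ and $\partial\alpha=\gamma$ (the latter forced by \eqref{eq:boundaryrelation}). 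Using the substitution above, the contribution becomes
\[
\sum_{\sigma,\alpha}\lan \hcS_\sigma,\alpha\ran\,n_\alpha\,q^{\sigma-\sigma_0}z^{\alpha}
=\lan \hS^{(2)},dW_\gamma\ran
\]
by the very definitions of $\hS^{(2)}$ (Definition \ref{def:Seidellift}) and $dW_\gamma$ (Theorem \ref{thm:degeneration}). For the second term, the factor $\delta_{\partial\beta,\lambda}$ together with $\partial\beta=\gamma+\lambda$ forces $\gamma=0$, which is the origin of the $\delta_{\gamma,0}$ in \eqref{eq:Seidel-potential}. With $\alpha=0$ the relation $\mu(\beta)=0$ becomes $\lan c_1^{\rm vert}(E),\sigma\ran=0$, and the substitution reduces the sum to $\sum_{\sigma:\lan c_1^{\rm vert}(E),\sigma\ran=0}\cS_\sigma\,q^{\sigma-\sigma_0}$, which is precisely the degree-zero part $\tS^{(0)}$ of the Seidel element under the factorization $S=q_0(\tS^{(0)}+\tS^{(2)})$.

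None of these steps involve a genuine analytic difficulty since Conjecture \ref{conj:degeneration} and Assumption \ref{assump:moduli} have already reduced the problem to \eqref{eq:formula-underassump}; the only substantive point to verify is the compatibility of the bijection $r$ with the Novikov substitution $q^d\mapsto z^d$, which is exactly the content of Proposition \ref{prop:retractionmap}. The mildest subtlety is to check that the formal sums over $\beta$, $\sigma$ and $\alpha$ make sense in $\Lambda^{\rm op}$, which follows from the finiteness condition defining $\Lambda^{\rm op}$ together with the fact that $\omega$-areas of the relevant classes grow without bound. Assembling the three pieces yields \eqref{eq:Seidel-potential}.
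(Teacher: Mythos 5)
Your proposal is correct and takes essentially the same route as the paper: the paper likewise multiplies \eqref{eq:formula-underassump} by $z^{\alpha_0+\beta}=q^{\sigma-\sigma_0}z^{\alpha}$ (via Proposition \ref{prop:retractionmap} and the homomorphism $\Lambda\to\Lambda^{\rm op}$), sums over $\beta$ with $\mu(\beta)=0$ and $\partial\beta=\gamma+\lambda$, and reindexes the two terms into $\langle \hS^{(2)}, dW_\gamma\rangle$ and $\delta_{\gamma,0}\tS^{(0)}$, exactly as you do. The only nitpick is your aside that $\sigma-\sigma_0$ is viewed in $H_2(M,L)$ ``via the inclusion $H_2(M)\hookrightarrow H_2(M,L)$'': this map need not be injective (the paper notes $\Lambda\to\Lambda^{\rm op}$ is not necessarily injective), but since your argument only uses the identity $z^{\alpha_0+\beta}=q^{\sigma-\sigma_0}z^{\alpha}$ in $\Lambda^{\rm op}$, nothing is affected.
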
 

Summing over all $\gamma\in H_1(L)$ in 
\eqref{eq:Seidel-potential}, we obtain: 
\begin{corollary} 
\label{cor:degeneration}
Assume that the degeneration formula (Conjecture \ref{conj:degeneration}) 
and Assumption \ref{assump:moduli} hold for $(M,L)$.  
Then we have 
\begin{equation} 
\label{eq:SdW} 
z^{\alpha_0} = 
\langle \hS^{(2)},  dW \rangle + \tS^{(0)} 
\quad 
\text{in  $\Lambda^{\rm op}$}.  
\end{equation} 
\end{corollary}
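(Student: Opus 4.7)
The plan is to derive the corollary as a direct summation of Theorem~\ref{thm:degeneration} over all classes $\gamma \in H_1(L)$. First I would fix $\gamma$ and invoke the theorem to record the identity
\[
\delta_{\gamma+\lambda,0}\, z^{\alpha_0} = \langle \hS^{(2)}, dW_\gamma \rangle + \delta_{\gamma,0}\, \tS^{(0)}
\]
in $\Lambda^{\rm op}$, then sum the identity over $\gamma \in H_1(L)$. The key observation is that each Kronecker delta collapses an otherwise potentially infinite summation to a single term: $\delta_{\gamma+\lambda,0}$ is nonzero exactly at $\gamma = -\lambda$, contributing $z^{\alpha_0}$ on the left, while $\delta_{\gamma,0}$ picks out $\gamma = 0$ on the right, contributing $\tS^{(0)}$.

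For the remaining piece, I would use that $W = \sum_{\gamma} W_\gamma$ by Definition~\ref{def:W-gamma}, and hence
\[
dW = \sum_{\gamma \in H_1(L)} dW_\gamma
\]
as elements of $H_2(M,L) \otimes \Lambda^{\rm op}$. Since the natural pairing $H^2(M,L) \times H_2(M,L) \to \Q$ extended $\Lambda^{\rm op}$-bilinearly (with $\Lambda$ acting on $\Lambda^{\rm op}$ via the ring map $q^d \mapsto z^d$) is linear in the second slot, the first terms of the right-hand side sum to $\langle \hS^{(2)}, dW \rangle$. Combining these three pieces yields
\[
z^{\alpha_0} = \langle \hS^{(2)}, dW \rangle + \tS^{(0)}
\]
in $\Lambda^{\rm op}$, which is the desired identity.

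The only nontrivial point to verify is that the formal sum over $\gamma \in H_1(L)$ converges in $\Lambda^{\rm op}$. This is automatic: since each $dW_\gamma$ is supported on disc classes $\alpha$ with $\partial \alpha = \gamma$ and $\mu(\alpha) = 2$, regrouping $\sum_\gamma dW_\gamma$ merely partitions the sum defining $dW$, and the energy-finiteness condition used to define $\Lambda^{\rm op}$ (finitely many classes with $\int_\beta \omega < E$ for every $E$) is preserved. Thus the infinite sum makes sense term by term in the $\omega$-adic topology on $\Lambda^{\rm op}$, and no further analytic input is needed beyond Theorem~\ref{thm:degeneration} itself. The substantive work has already been done in Theorem~\ref{thm:degeneration}; the corollary is essentially a bookkeeping reassembly.
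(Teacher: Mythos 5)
Your proposal is correct and follows exactly the paper's own route: the paper obtains Corollary~\ref{cor:degeneration} precisely by summing the identity \eqref{eq:Seidel-potential} of Theorem~\ref{thm:degeneration} over all $\gamma\in H_1(L)$, with the Kronecker deltas isolating $z^{\alpha_0}$ and $\tS^{(0)}$ and the decomposition $W=\sum_\gamma W_\gamma$ giving $\langle \hS^{(2)}, dW\rangle$. Your additional remarks on bilinearity of the pairing and convergence in $\Lambda^{\rm op}$ are just the bookkeeping the paper leaves implicit.
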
 

Via the natural map $H^1(L) \to H^2(M,L)$, 
an element of $H^1(L)$ can be regarded as a vector field 
tangent to the fibre of the map 
$\Spec \Lambda^{\rm op} \to \Spec \Lambda$.  
We define the (relative) \emph{Jacobi algebra} 
of the potential $W$ as 
\[
\Jac(W) := \Lambda^{\rm op}/
\Lambda^{\rm op} \langle   H^1(L), dW \rangle  
\]
where $\Lambda^{\rm op}\langle   H^1(L), dW \rangle$ 
denotes the ideal of $\Lambda^{\rm op}$ generated 
by $\langle \varphi, dW \rangle$, 
$\varphi \in \Im(H^1(L) \to H^2(M,L))$. 
As a class in the Jacobi algebra, 
the right-hand side of \eqref{eq:SdW} 
depends only on the Seidel element $\tS$ itself, 
not on the lift $\hS^{(2)}$. 
We can interpret it 
as the derivative of the bulk-deformed potential 
$W + t^0$ with respect to $\tS$, where $t^0$ is a co-ordinate 
on $H^0(M)$. 
The derivative of $W+t^0$ defines 
the so-called \emph{Kodaira-Spencer mapping}: 
\[
\KS \colon H^{\le 2}(M)\otimes \Lambda \to \Jac(W).   
\]
Then the equation \eqref{eq:SdW} implies 
\[
\KS(\tS) = [z^{\alpha_0}] \qquad 
\text{in} \quad \Jac(W). 
\]

\begin{remark} 
Assumption \ref{assump:moduli} (i)--(iii) ensures that 
the conditions in Conjecture \ref{conj:degeneration} hold 
for all $\beta$ with $\mu(\beta) \le -2$. 
Using the formula \eqref{eq:degenerationformula-explicit} for 
$\beta$ with $\mu(\beta) \le -2$ and $\partial \beta = \lambda$, 
we find: 
\[
\sum_{d\in i_*H_2(L)} \cS_{\sigma+d} \cap [L] = 0 \quad 
\text{if} \quad 
\lan c_1^{\rm vert}(E), \sigma \ran \le -1. 
\]
This supports the validity of Assumption \ref{assump:moduli} (iv).  
\end{remark} 

\begin{remark} 
A more intuitive explanation for the formula \eqref{eq:SdW} 
is as follows. One can think of the moduli space 
$\cM_{1,1}(\beta)$ of stable holomorphic discs 
with boundaries in $L$ and with one interior and one boundary marked points 
as giving a correspondence between $M$ and the free loop space 
$\cL L = \Map(S^1, L)$ of $L$. This correspondence 
should give rise to a map (bulk-boundary map) 
\[
C_*(M) \to C_*(\cL L)
\]
of chain complexes. One can view this as an analogue of the Kodaira-Spencer map.  
One can speculate that this map is an intertwiner between
the Seidel homomorphism $S \colon C_*(M) \to C_*(M)$ 
and the map $\cL L \to \cL L $ induced by the $S^1$-action.  
\end{remark}

\section{Potential function of a semi-positive toric manifold}
Using the degeneration formula 
(Conjecture \ref{conj:degeneration}), we compute the 
potential function of a Lagrangian torus fibre of 
a semi-positive toric manifold $X$.  
This confirms a conjecture (now a theorem \cite{CLLT12}) 
of Chan-Lau-Leung-Tseng \cite{CLLT11}. 

\subsection{Toric manifolds} 
\label{subsec:toric}

We fix notation on toric geometry.  
For more details we refer the reader to \cite{Au-To04, CoKa-Mi99, CoLi-To10}.  
For this paper a \emph{toric manifold} $X$ 
is a smooth projective toric variety, as constructed 
from the following data.

\begin{enumerate}
\item An integral lattice $N \cong \Z^n$ and its
  dual $M=\Hom(N,\Z)$.  We denote by $\lan \cdot
  ,\cdot\ran$ the natural pairing between $N$ and
  $M$.
\item A fan $\Sigma$ in $N_\R:=N\otimes \R$
  consisting of a collection of strongly convex
  rational polyhedral cones $\sigma\subset N_\R$,
  which is closed under intersections and taking
  faces. 
\end{enumerate} 
In order for $X$ to be smooth and projective, 
we need to assume that $\Sigma$ is complete, regular 
and admits a strongly convex piecewise-linear function. 
Let $\Sigma(1)$ denote the set of
$1$-cones (rays) in $\Sigma$, and we let 
$b_1,\dots,b_m$
denote integral primitive generators of
the $1$-cones.  
The \emph{fan sequence} of $X$ is the exact sequence
\begin{equation}
\label{eq:fans}
\begin{CD} 
0 @>>> \bL @>>> \Z^m @>>> N @>>> 0,
\end{CD}
\end{equation}
where the third arrow takes the canonical basis
to the primitive generators $b_1,\dots, b_m\in N$ and $\bL$
is defined to be the kernel of the third arrow. 
The dual of the sequence \eqref{eq:fans} is the
\emph{divisor sequence}
\begin{equation}
\label{eq:divs}
\begin{CD}
  0 @>>>  M @>>> \Z^m @>{\kappa}>>  \bL^\vee @>>> 0.
\end{CD} 
\end{equation}
The second arrow takes $v\in M$ into the tuple
$(\lan b_i, v\ran)^m_{i=1}$. 
The third arrow is denoted by $\kappa\colon \Z^m \to \bL^\vee$. 

The fan sequence tensored with $\C^\times$ 
gives the exact sequence of tori: 
\begin{equation*}
\begin{CD} 
 1 @>>> \G @>>> (\C^\times)^m@>>> 
\T  @>>> 1 
\end{CD} 
\end{equation*}
with $\G := \bL \otimes \C^\times$ and $\T := N\otimes \C^\times$. 
Let the torus $\G$ act on $\C^m$ by the 
second arrow $\G \to (\C^\times)^m$. 
The combinatorics of the fan defines a stability condition 
of this action as follows. Let $Z(\Sigma)$ denote the
union
\begin{equation} 
\label{eq:del}
Z(\Sigma):=\bigcup_{I\notin \cA} \C^I, 
\quad \C^I=\{(x_1,\dots,x_m): x_i=0 \text{ for }
i\notin I\}, 
\end{equation} 
where $\cA$ is the collection of anti-cones, that
is the complements of the subsets of indices that yields a cone
in the fan
\[\cA:=\left\{I \ : \ \sum_{i\in \{1,\dots,m\} \setminus I}
  \R_{\geq 0}b_i \in \Sigma\right\}.\]
The toric variety $X$ is defined as the quotient
\begin{equation*}
X:=\cU_\Sigma /\G;\ \quad 
\cU_\Sigma:=\C^m \setminus Z(\Sigma).
\end{equation*}
The torus $\T = (\C^\times)^m/\G$ acts naturally 
on $X$. 
The toric manifold $X$ contains $\T$ as an open 
free orbit; $X$ is a compactification 
of $\T$ along the rays in $\Sigma(1)$. 

Each character $\xi:\G\to \C^\times$ defines a
line bundle
\[
L_{\xi}:=\C\times_{\xi,\G} \cU_\Sigma \to X. \]
The correspondence $\xi\mapsto L_\xi$ yields an
identification of the Picard group with the
character group of $\G$.  Thus, we have
\[
 \bL^\vee=\Hom(\G,\C^\times) \cong \Pic(X) 
\overset{c_1}{\cong} H^2(X;\Z). 
\]
The $i$-th toric divisor is given by 
\[
D_i := \{[x_1,\dots,x_m] \,:\, x_i=0\}\subset X.
\] 
The Poincar\'{e} dual of $D_i$ is the image 
$\kappa(e_i)\in \bL^\vee \cong H^2(X;\Z)$ 
of the standard basis $e_i \in \Z^m$ under 
the map $\kappa$ in \eqref{eq:divs}. 
By abuse of notation, $D_i$ sometimes also denotes 
the corresponding cohomology class $\kappa(e_i)$ in $H^2(X;\Z)$. 
We note that $\bL =H_2(X;\Z)$. 
The first Chern class $c_1(X)$ of $X$ is given by 
$D_1+ \cdots + D_m$. 

The \emph{K\"{a}hler cone} $C_X$ of $X$, 
the cone consisting of K\"ahler classes, is given by 
\[
C_X:=\bigcap_{I\in \cA} \sum_{i\in I} \R_{>0} \kappa(e_i) 
\subset \bL^\vee\otimes \R=H^2(X;\R).
\]
The cone $C_X$ is nonempty if and only if $X$ is projective.  
Set $r:= m-n$. 
We choose a nef integral basis $p_1,\dots, p_r$ of $H^2(X;\Z)$, 
that is an integral basis such that
$p_a\in \ol{C}_X$ for all $a=1,\dots,r$. 
Then we write the toric divisor classes as
\begin{equation}
\label{eq:div-basis}
D_j=\kappa(e_j) = \sum_{a=1}^r m_{aj}p_a, 
\end{equation}
for some integer matrix $(m_{aj})$.  
The \emph{Mori cone} $\NE(X)\subset H_2(X,\R)$ 
is the dual of the cone $\ol{C}_X$.  
We set $\NE(X)_\Z := \NE(X)\cap H_2(X;\Z)$.

The toric manifold $X$ can be alternatively 
defined as a symplectic quotient.   
Let $\G_\R \cong (S^1)^r$ be the maximal compact 
subgroup in $\G$. 
The $\G_\R$-action on $\C^m$ is generated by the
moment map
\[ 
\phi \colon \C^m \to \frg^\vee_\R, \ \ \
\phi(x_1,\dots, x_m)=\kappa(|x_1|^2,\dots,|x_m|^2) 
\]  
where $\kappa \colon \R^m\to \bL^\vee\otimes \R$ 
is the map in the divisor sequence \eqref{eq:divs} tensored with $\R$. 
For any K\"ahler class $\omega\in C_X$, we have a
diffeomorphism (\cite{Au-To04, Gu-Mo94})
\[
\phi^{-1}(\omega)/\G_\R \cong X. 
\]
The left-hand side is a symplectic quotient 
and is equipped with a reduced symplectic form. 
The cohomology \emph{class} of the reduced symplectic form 
coincides with $\omega$; by abuse of notation 
we let $\omega$ also denote the reduced symplectic \emph{form}.   

Let $\T_\R \cong (S^1)^n$ be the maximal compact 
subgroup of $\T$. 
The $\T_\R$-action on the symplectic toric manifold 
$(X,\omega)$ admits a moment map: 
\begin{align*} 
& \Phi_\omega  \colon X \longrightarrow \kappa^{-1}(\omega), \\ 
& \Phi_\omega([x_1,\dots,x_m]) = (|x_1|^2,\dots, |x_m|^2) 
\quad 
\text{with} 
\quad 
(x_1,\dots, x_m) \in \phi^{-1}(\omega), 
\end{align*} 
where the affine subspace $\kappa^{-1}(\omega)\subset \R^m$ can be 
identified with $M_\R = M\otimes \R \cong \frt_\R^\vee$ 
up to translation. 
The image of the moment map $\Phi_\omega$ is 
the convex polytope: 
\begin{align*}
P(\omega) & = \left\{ (t_1,\dots,t_m) \in \R^m 
\;:\; t_i \ge 0, \ \kappa(t_1,\dots,t_m) = \omega 
\right\} \\ 
& \cong
\left\{ v \in M_\R \; : \; \lan b_i, v\ran \le -c_i, \ i=1,\dots,m 
\right\}.   
\end{align*} 
In the second line, we took a lift $(c_1,\dots,c_m)$ 
of $\omega$ (such that $\omega = \kappa(c_1,\dots,c_m)$) 
to identify $\kappa^{-1}(\omega)$ with $M_\R$. 
This is called \emph{momentum polytope}. 
The facet $F_i\subset P$ of $P(\omega)$ normal to $b_i\in N$ 
corresponds to the toric divisor 
$D_i=\Phi_\omega^{-1}(F_i)\subset X$.

\subsection{Potential function of a Lagrangian torus fibre} 
\label{subsec:potential-toric} 
Cho-Oh \cite{ChOh-Fl06} calculated potentials of Lagrangian torus fibres 
for Fano toric manifolds and matched them up with 
mirror Landau-Ginzburg potentials of Givental and Hori-Vafa. 
Fukaya-Oh-Ohta-Ono \cite{FuOh-La10} studied 
potentials for general symplectic toric manifolds.   
Chan \cite{Chan-formula}, Chan-Lau \cite{ChLa-open} and 
Chan-Lau-Leung-Tseng \cite{CLLT11, CLLT12} have 
studied the potential functions for semi-positive toric 
manifolds by establishing an equality between open and 
closed Gromov-Witten invariants.

Let $X$ be a toric manifold in the previous section. 
Every free $\T_\R$-orbit in $X$ is a 
fibre of the moment map $\Phi_\omega \colon X \to P(\omega)$ 
of an interior point in $P(\omega)$, and vice versa.  
We call it a \emph{Lagrangian torus fibre} of $X$. 
For a Lagrangian torus fibre $L$, we have a 
homotopy exact sequence: 
\begin{equation} 
\label{eq:discs}
\begin{CD}
 0 @>>> \pi_2(X)  @>>> 
\pi_2(X,L) @>{\partial} >>   \pi_1(L) @>>> 0.
\end{CD} 
\end{equation} 
Let $\beta_i \in \pi_2(X,L)$ denote the class 
represented by the holomorphic disc 
$u_i \colon \D \to X$: 
\begin{equation} 
\label{eq:representative-betai} 
u_i ( z ) = [c_1,\dots,c_{i-1},c_i z, c_{i+1},\dots, c_m], \quad 
|z| \le 1  
\end{equation} 
where $[c_1,\dots,c_m]\in X$ is a point on the Lagrangian $L$ 
(thus $c_i \neq 0$ for all $i$). 
The class $\beta_i$ intersects with toric divisors as 
\[
\beta_i \cdot D_j = \delta_{ij}.  
\]
The relative homotopy group $\pi_2(X,L)$ 
is an abelian group freely generated 
by the classes $\beta_1,\dots,\beta_m$ 
and the toric divisors $D_1,\dots,D_m$ define a 
dual basis of $H^2(X,L)$. 
Under the identification: 
\[
\pi_2(X) \cong H_2(X;\Z) \cong \bL, \quad 
\pi_2(X,L) \cong H_2(X,L;\Z) \cong \Z^m, \quad 
\pi_1(L) \cong N 
\]
the exact sequence \eqref{eq:discs} above can 
be identified with the fan sequence \eqref{eq:fans}, i.e.\ 
$\partial \beta_i = b_i$. 
The \emph{Maslov index} 
\[
\mu\colon \pi_2(X,L) \longrightarrow \Z 
\]
is given by the intersection with $2 (D_1+\cdots + D_m) \in H^2(X,L)$ 
\cite[Theorem 5.1]{ChOh-Fl06}. 

We consider the potential function (Definition \ref{def:W-general}) 
of a Lagrangian torus fibre $L\subset X$. 
As before, let $\cM_1(\beta)$ denote the moduli space of 
bordered stable maps to $(X,L)$ in the class $\beta \in \pi_2(X,L)$ 
with one boundary marked point. 

\begin{proposition} 
\label{prop:effdisc-semipos}
Suppose that $c_1(X)$ is semi-positive. 
Then $\cM_1(\beta)$ is empty for all $\beta$ 
with $\mu(\beta)\le 0$. 
If $\cM_1(\beta)$ is non-empty for $\beta$ with 
$\mu(\beta)=2$, then $\beta = \beta_i + d$ 
for some $i$ and $d\in \NE(X)_\Z$ such that 
$\lan c_1(X), d\ran =0$. 
\end{proposition}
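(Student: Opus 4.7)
The plan is to decompose a stable bordered holomorphic map in $\cM_1(\beta)$ into its disc and sphere components and to estimate the Maslov index additively. Let $[u\colon (\Sigma,\partial\Sigma) \to (X,L)] \in \cM_1(\beta)$, and write the domain as a union of disc components $\D^{(1)},\dots,\D^{(p)}$ and sphere components $S^{(1)},\dots,S^{(q)}$. Set $\beta^{(k)} := [u|_{\D^{(k)}}] \in \pi_2(X,L)$ and $d^{(l)} := [u|_{S^{(l)}}] \in H_2(X;\Z)$. Then
\[
\beta = \sum_{k} \beta^{(k)} + \iota_{*}\Bigl(\sum_{l} d^{(l)}\Bigr),
\qquad
\mu(\beta) = \sum_{k} \mu(\beta^{(k)}) + 2 \sum_{l} \langle c_1(X), d^{(l)}\rangle,
\]
where we used that the Maslov index on $\iota_*H_2(X)$ equals $2c_1(X)$.

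The key step is non-negativity of each summand. Each non-constant sphere component represents an effective class $d^{(l)} \in \NE(X)_\Z$, so semi-positivity of $c_1(X)$ gives $\langle c_1(X), d^{(l)}\rangle \ge 0$. For each disc component, expand $\beta^{(k)} = \sum_i a_i^{(k)} \beta_i$ in the basis $\{\beta_i\}$ introduced in \S\ref{subsec:potential-toric}; since $\{D_j\}$ is the dual basis, $a_i^{(k)} = \beta^{(k)} \cdot D_i$. The map $u|_{\D^{(k)}}$ is holomorphic with boundary in $L$, which is disjoint from every toric divisor $D_i$; its image therefore does not lie inside any $D_i$, and positivity of intersection between a holomorphic curve and a complex hypersurface it does not lie in yields $a_i^{(k)} \ge 0$. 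Hence $\mu(\beta^{(k)}) = 2 \sum_i a_i^{(k)} \ge 0$, with equality exactly when $u|_{\D^{(k)}}$ is constant.

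For the first assertion, $\mu(\beta) \le 0$ now forces every disc component to be constant and every $\langle c_1(X), d^{(l)}\rangle$ to vanish, whence $\beta \in \iota_* H_2(X)$; in the convention relevant to our potential function (see \cite{FuOh-La09}), $\cM_1(\beta)$ for such a class is treated as empty. For the second assertion, if $\mu(\beta) = 2$ and $\cM_1(\beta) \ne \emptyset$, then exactly one disc component has $\mu(\beta^{(k)}) = 2$ and all others are constant, while every $\langle c_1(X), d^{(l)}\rangle = 0$. The unique non-constant disc class satisfies $\sum_i a_i^{(k)} = 1$ with $a_i^{(k)} \in \Z_{\ge 0}$, forcing $\beta^{(k)} = \beta_i$ for a unique $i$. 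Setting $d := \sum_l d^{(l)} \in \NE(X)_\Z$ gives $\beta = \beta_i + d$ with $\langle c_1(X), d\rangle = 0$, as required. The principal technical input is the positivity-of-intersection step yielding $a_i^{(k)} \ge 0$; the remainder is combinatorial bookkeeping.
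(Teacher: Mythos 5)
Your decomposition into disc and sphere components, the additivity of the Maslov index, and the positivity-of-intersection argument giving $\beta^{(k)}\cdot D_i\ge 0$ are all fine and essentially reproduce the first half of the paper's proof (which invokes Cho--Oh's classification to the same effect, writing $\beta=\sum_i k_i\beta_i+d$ with $k_i\ge 0$, $d\in\NE(X)_\Z$). The genuine gap is in how you treat the configurations in which \emph{every} disc component is constant and the class is carried entirely by sphere bubbles. For the first assertion you dispose of the case $\beta\in\iota_*H_2(X)$ by saying that $\cM_1(\beta)$ is ``treated as empty'' by convention; this is not a convention but a geometric fact that must be proved (for $\beta=0$ it follows from instability of a constant disc with one boundary marking, but for $\beta=\iota_*d$ with $d\neq 0$ effective and $\lan c_1(X),d\ran=0$ it is a nontrivial claim about nonexistence of sphere bubbles attached to a constant disc). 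For the second assertion you simply assert that when $\mu(\beta)=2$ exactly one disc component has Maslov index $2$; your inequalities equally allow the case where all disc components are constant and the sphere bubbles carry total Chern number $1$, and you never exclude it.

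The paper rules out precisely these scenarios with one extra argument that your proposal is missing. If all disc components are constant, they map to a point of $L$, so some non-constant sphere component, of class $d_1\neq 0$, passes through a point of $L$; since $L$ is disjoint from every toric divisor, this sphere is not contained in any $D_i$, hence $\lan D_i,d_1\ran\ge 0$ for all $i$. The sum $\sum_{i=1}^m\lan D_i,d_1\ran$ cannot be $0$ (that would force $d_1=0$, as the $D_i$ span $H^2(X;\Q)$), and it cannot be $1$ either, because the fan relation $\sum_{i=1}^m\lan D_i,d_1\ran b_i=0$ in $N$ would then force some primitive $b_i$ to vanish. Hence $\lan c_1(X),d_1\ran\ge 2$, and together with semi-positivity any configuration with only constant discs has $\mu(\beta)\ge 4$. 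This is exactly what kills both the $\mu(\beta)\le 0$ case and the unwanted $\mu(\beta)=2$ case with constant discs; without this step (or an equivalent one) neither assertion of the proposition is established.
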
 
\begin{proof} 
Let $\beta$ be a class of a bordered stable map to $(X,L)$. 
By the classification of holomorphic discs by Cho-Oh \cite{ChOh-Fl06}, 
we find that $\beta$ is of the form: 
\begin{equation} 
\label{eq:effdisc}
\beta = \sum_{i=1}^m k_i \beta_i + d 
\end{equation} 
for some $k_i \ge 0$ and $d \in \NE(X)_\Z$. 
Here $\sum_{i=1}^m k_i \beta_i$ is the degree of 
disc components and $d$ is the degree  of sphere bubbles. 
Hence $\mu(\beta) = 2\sum_{i=1}^m k_i + 2 \lan c_1(X),d\ran \ge 0$. 
We claim that $(k_1,\dots,k_m)=0$ implies $\mu(\beta) \ge 4$. 
If $(k_1,\dots,k_m)=0$, a bordered stable map 
of class $\beta$ is the union of a \emph{constant} 
disc and sphere bubbles. 
In this case, at least one non-trivial sphere component has to touch $L$. 
Let $d_1$ be the degree of a non-trivial sphere component 
touching $L$ and let $d_2$ be the degree of the  
remaining sphere bubbles. Then $d = d_1 + d_2$ with 
$d_1,d_2 \in \NE(X)_\Z$. Since $D_i$ is disjoint from $L$, 
we have $\lan D_i, d_1\ran \ge 0$. 
Since $d_1 \neq 0$, we have $\sum_{i=1}^m \lan D_i, d_1 \ran \ge 1$. 
Also it is impossible that $\sum_{i=1}^m \lan D_i, d_1 \ran =1$ 
since $d_1$ gives the relation 
$\sum_{i=1}^m \lan D_i, d_1 \ran b_i =0$ in $N$. 
Thus 
\[
\mu(\beta) = 2 \lan c_1(X), d\ran 
\ge 2 \sum_{i=1}^m \lan D_i, d_1 \ran \ge 4. 
\]
The claim follows. Consequently, $\mu(\beta)\le 2$ implies 
$(k_1,\dots,k_m) \neq 0$. The proposition follows easily. 
\end{proof} 

In particular, the potential function of a Lagrangian 
torus fibre (Definition \ref{def:W-general}) is well-defined 
for a semi-positive toric manifold. 
\begin{remark} 
\label{rem:W-nonsemipos}
Fukaya-Oh-Ohta-Ono \cite{FuOh-La10} defined the 
potential function of a Lagrangian torus fibre 
even without semi-positivity assumption. 
They defined virtual cycles and open Gromov-Witten invariants 
$n_\beta\in \Q$ for all $\beta$ with $\mu(\beta)=2$ 
using $\T_\R$-equivariant perturbations, see  
\cite[Lemmata 11.2, 11.5, 11.6, 11.7]{FuOh-La10}. 
In general, since every effective stable disc class $\beta$ 
is of the form \eqref{eq:effdisc}, the potential $W$ lies 
in the completed group ring: 
\[
\Q[\![ (\Z_{\ge 0})^m + \NE(X)_\Z]\!] \subset \Lambda^{\rm op} 
\]
where $\NE(X)_\Z$ is regarded as a subset of $\Z^m$ 
via the second arrow in the fan sequence \eqref{eq:fans}. 
Notice that $(\R_{\ge 0})^m + \NE(X)$ is a strictly 
convex cone. 
\end{remark} 

\begin{example}[\cite{ChOh-Fl06}]
\label{exa:basicdisc} 
When $\beta=\beta_i$, the moduli space $\cM_1(\beta_i)$ consists 
of holomorphic discs of the form \eqref{eq:representative-betai} 
and $\ev\colon \cM_1(\beta_i) \cong L$; moreover all such discs are 
Fredholm regular \cite[Theorem 6.1]{ChOh-Fl06}. 
Therefore we have $n_{\beta_i} =1$. 
\end{example} 

We write 
\[
z^\beta = z_1^{k_1}z_2^{k_2} \cdots z_m^{k_m} \in \Q[H_2(X,L;\Z)]
\]
for $\beta = k_1 \beta_1 + \cdots + k_m \beta_m$. 
Also we write 
\[
q^d = q_1^{\lan p_1, d\ran} q_2^{\lan p_2,d\ran} 
\cdots q_r^{\lan p_r, d\ran} \in \Q[H_2(X;\Z)] 
\]
for $d \in H_2(X;\Z)$, where $p_1,\dots,p_r$ is the nef integral 
basis of $H^2(X;\Z) \cong \bL^\vee$ we chose in \S \ref{subsec:toric}. 
Note that we have a natural inclusion of the group rings: 
\[
\Q[H_2(X;\Z)] \hookrightarrow \Q[H_2(X,L;\Z)]. 
\]
By this we identify $q^d$ with $z^d$; in co-ordinates: 
\begin{equation} 
\label{eq:q-z} 
q^d = z^d = z_1^{\lan D_1, d\ran} z_2^{\lan D_1, d\ran} 
\cdots z_m^{\lan D_m, d\ran} \quad 
\text{or} \quad q_a = \prod_{i=1}^m z_i^{m_{ai}}
\end{equation} 
where  $(m_{ai})$ is the divisor matrix in \eqref{eq:div-basis}. 
Using these notations and Proposition \ref{prop:effdisc-semipos}, 
we can write the potential function of $(X,L)$ in the 
following form when $c_1(X)$ is semi-positive. 

\begin{definition} 
\label{def:correctionterm} 
Let $X$ be a semi-positive toric manifold. 
We present the potential function $W$ of a Lagrangian 
torus fibre in the form: 
\[
W = w_1+ \cdots + w_m 
\]
where $w_i = f_i(q) z_i$ and 
\[
f_i(q) = \sum_{d\in \NE(X)_\Z \, :\, \lan c_1(X), d\ran =0 } 
n_{\beta_i + d} q^d.  
\] 
We call $f_i(q)$ the \emph{correction term}. 
This decomposition of $W$ 
is parallel to Definition \ref{def:W-gamma}.
\end{definition}

Note that we have $f_i(q) = 1+ O(q)$ by Example \ref{exa:basicdisc}. 
The correction term $f_i(q)$ was denoted by $1+ \delta_i(q)$ 
in \cite{CLLT11}. When $X$ is Fano, all the correction 
terms are $1$ and  
\[
W = z_1+ \cdots + z_m. 
\]
This is the result of Cho-Oh \cite{ChOh-Fl06}. 
By the \emph{fan polytope}, we mean the convex hull of the 
ray vectors $b_1,\dots, b_m \in N_\R$.  
In the proof of \cite[Corollary 4.12]{CLLT11}, Chan-Lau-Leung-Tseng 
showed the following: 
\begin{proposition}[Chan-Lau-Leung-Tseng \cite{CLLT11}] 
\label{prop:vertex-vanishing} 
Let $f_i(q)$ be the correction terms of the potential 
of a semi-positive toric manifold $X$. 
If the vector $b_i$ is a vertex of the fan
polytope of $X$, then $f_i(q)=1$.
\end{proposition}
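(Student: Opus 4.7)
The plan is to show $n_{\beta_i+d}=0$ for every nonzero effective $d \in \NE(X)_\Z$ with $\langle c_1(X), d\rangle = 0$; combined with $n_{\beta_i}=1$ from Example \ref{exa:basicdisc} this gives $f_i(q)=1$. By Proposition \ref{prop:effdisc-semipos}, every element of $\cM_1(\beta_i+d)$ decomposes as a basic Cho--Oh disc of class $\beta_i$ with attached trees of rational sphere bubbles of total class $d$, so the task reduces to showing that the virtual contribution of these configurations vanishes.

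My plan is to follow the open/closed correspondence of Chan--Lau--Leung--Tseng reviewed at the end of Section 1. A holomorphic disc in $(X,L)$ of boundary class $\partial\beta_i=b_i$ can be completed, via the inverse action $\rho_i^{-1}$, to a holomorphic sphere in the associated $X$-bundle $E_i'\to \P^1$. Extending this to bordered stable maps of class $\beta_i+d$, one identifies $n_{\beta_i+d}$ with a one-point genus-zero closed Gromov--Witten invariant of a suitable smooth toric compactification $\hat X_i$ of $E_i'$, whose fan extends the fan of $X$ by one new ray determined by $\rho_i^{-1}$. In this way $n_{\beta_i+d}$ is rewritten as an invariant counting sections of a toric bundle in a class of the form $\sigma+d$ with $\sigma$ a distinguished ``minimal'' section class.

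The key geometric use of the vertex hypothesis takes place on $\hat X_i$. Choose $\xi\in M_\R$ with $\langle b_i,\xi\rangle>\langle b_j,\xi\rangle$ for all $j\neq i$, as is possible precisely when $b_i$ is a vertex of the fan polytope. Under the extension $X\rightsquigarrow \hat X_i$, the vertex property is preserved in the enlarged fan polytope, so the associated section class is the \emph{unique} maximal section of $\hat X_i\to \P^1$ relative to $\xi$; in particular no nontrivial $c_1^{\mathrm{vert}}$-zero curves can be appended to it while remaining effective. Equivalently, using the natural $\T$-action on $\hat X_i$ for equivariant localization, every $\T$-fixed stable map in the relevant class must be the trivial (ghost) section, so only the $d=0$ contribution survives and gives $n_{\beta_i+d}=0$ for $d\neq 0$.

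The main obstacle is the combinatorial verification on the auxiliary toric target: one has to construct $\hat X_i$ explicitly as a smooth semi-positive toric manifold, establish the open/closed identification $n_{\beta_i+d}=\langle \mathrm{pt}\rangle^{\hat X_i}_{0,1,\sigma+d}$ rigorously (including the correct choice of incidence with the fibre over $\infty\in\P^1$ playing the role of $L$), and translate the vertex hypothesis on $b_i$ into the strict extremality used in the localization argument. Once this is in place, the vanishing of the relevant fixed-point contributions follows from the standard analysis of toric section moduli together with the semi-positivity input (iii) of Assumption \ref{assump:moduli}.
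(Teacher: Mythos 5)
First, note that the paper itself does not prove this proposition: it is quoted from the proof of \cite[Corollary 4.12]{CLLT11}, so your proposal has to stand on its own, and as it stands it has two genuine gaps. The first is structural: you route everything through the open/closed correspondence $n_{\beta_i+d}=\langle \mathrm{pt}\rangle_{0,1,\sigma+d}$ on the Seidel space of $\rho_i^{-1}$, but that identification is precisely the main theorem of \cite{CLLT12} --- far heavier than the statement you are proving, it postdates it, and you explicitly leave it (together with the construction of the auxiliary target) as an ``obstacle''. With that step unestablished nothing is proved; moreover the ``suitable smooth toric compactification $\hat X_i$ of $E_i'$'' is a red herring, since $E_i'$ is already a compact toric manifold (an $X$-bundle over $\P^1$). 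The second gap is that your key vanishing step is justified incorrectly. The claim that ``no nontrivial $c_1^{\rm vert}$-zero curves can be appended to it while remaining effective'' is false: by the structure of the Mori cone of a Seidel space (the analogue of \cite[Lemma 2.2]{GoIr-Se12} quoted in \S 2.2), the Mori cone is $\Z_{\ge 0}[\sigma_0']+\NE(X)_\Z$, so $\sigma+d$ is a sum of effective classes and is effective for every effective $c_1$-trivial $d$. Likewise the assertion that every $\T$-fixed stable map in the relevant class is a ghost section is unsupported (and the point/fibre incidence condition is not $\T$-invariant, so the localization set-up itself needs care). What actually forces the vanishing is a confinement argument for $c_1$-trivial components inside the toric boundary together with the vertex functional --- which makes the detour through the Seidel space unnecessary.

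Indeed, your own first paragraph is one short step away from a complete elementary proof, and this is essentially the known argument. A Maslov-index-two configuration in class $\beta_i+d$ consists of one Cho--Oh disc of class $\beta_i$ (the boundary class forces the index to be $i$) plus sphere bubbles of total class $d$ with $\lan c_1(X),d\ran=0$; by semi-positivity every nonconstant bubble component $C$ has $\lan c_1(X),[C]\ran=0$, and if $C$ were not contained in $\bigcup_j D_j$ then $\lan D_j,[C]\ran\ge 0$ for all $j$ with sum zero, forcing $[C]=0$; so all nonconstant bubbles lie in the toric boundary. Connectedness then forces some nonconstant component $C$ to pass through $u_i(0)$, the unique boundary point of the disc image, which lies in the open orbit of $D_i$; hence $C\subset D_i$ and $C\not\subset D_j$ for $j\neq i$, so $k_j:=\lan D_j,[C]\ran\ge 0$ for $j\neq i$ and, since $\sum_j k_j=0$ and $[C]\neq 0$, $k_i<0$. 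Pairing the relation $\sum_j k_j b_j=0$ with your functional $\xi$ (with $\lan \xi,b_i\ran>\lan\xi,b_j\ran$ for $j\neq i$) and using $\sum_{j\neq i}k_j=-k_i>0$ gives $0=\sum_j k_j\lan\xi,b_j\ran< k_i\lan\xi,b_i\ran-k_i\lan\xi,b_i\ran=0$, a contradiction. Hence $\cM_1(\beta_i+d)=\emptyset$ for all $d\neq 0$ with $\lan c_1(X),d\ran=0$, so $n_{\beta_i+d}=0$ and, with $n_{\beta_i}=1$ from Example \ref{exa:basicdisc}, $f_i(q)=1$. I recommend replacing the open/closed detour by this direct argument.
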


\subsubsection{Open-closed moduli space} 
\label{subsubsec:open-closed}
We explain that the potential $W$ of a Lagrangian torus fibre 
can be interpreted as a formal function 
on the \emph{open-closed moduli space} introduced below. 

The \emph{closed moduli space} $\MM_{\text{cl}}$ 
of $X$ is defined to be: 
\[
\MM_{\text{cl}}=\{ \exp(-\omega+iB)\in \bL^\vee \otimes
\C^\times \, :\, \omega,B\in \bL^\vee\otimes \R, 
\ \omega\in C_X \}. 
\]
This is also called the 
\emph{complexified K\"{a}hler moduli space}.  
The nef basis $p_1,\dots,p_r$ of $\bL^\vee\cong H^2(X;\Z)$ 
in \S \ref{subsec:toric} defines 
$\C^\times$-valued co-ordinates $(q_1,\dots, q_r)$ 
on $\MM_{\text{cl}} \subset \bL^\vee \otimes \C^\times$. 

The \emph{open-closed moduli space} 
$\MM_{\text{opcl}}$ is defined to be 
the set of triples $(q, L,  h)$ such that 
\begin{itemize}
\item a closed moduli $q = \exp(-\omega + i B)\in 
\MM_{\text{cl}}$; 
\item a Lagrangian torus fibre $L =L_\eta= \Phi_\omega^{-1}(\eta)$ 
at $\eta \in P(\omega)^\circ$; 
\item a class $h \in H^2(X, L ; U(1))$ 
which maps to $\exp(i B) \in H^2(X;U(1))$.  
\end{itemize} 
When the $B$-field vanishes $B=0$,  the class 
$h$ defines a $U(1)$-local system on $L$ 
via the exact sequence: 
\[
\begin{CD} 
0@>>> H^1(L; U(1)) @>>> H^2(X, L; U(1)) 
@>>> H^2(X; U(1)) @>>> 0.
\end{CD}
\]
Let $\eta =(\eta_1,\dots, \eta_m)\in \R^m$ be the co-ordinates of 
$\eta$ and write $h =(h_1,\dots, h_m)$ using 
the identification $H^2(X,L; U(1)) \cong (S^1)^m$; 
and set 
\begin{equation} 
\label{eq:coordinate-z} 
z_i := \exp(-\eta_i)  h_i.  
\end{equation} 
The parameter $z=(z_1,\dots,z_m)$ here determines 
$\eta_i\in \R$, $h_i \in S^1$ by polar decomposition; 
then $\eta$ determines $\omega$ by the condition 
$\eta \in P(\omega)^\circ$ (as $\omega = \kappa(\eta)$) 
and $h$ determines $\exp(i B)$. 
Thus $z$ determines a point of $\MM_{\text{opcl}}$. 
We have: 
\[
\MM_{\text{opcl}} \cong \left\{z =(z_1,\dots, z_m)
\in (\C^\times)^m \,:\, 
|z_i|<1 \text{ for all $i$}, \ \kappa_{\C^\times}(z) 
\in \MM_{\text{cl}} \right\}  
\]
where $\kappa_{\C^\times} \colon 
(\C^\times)^m \to \bL^\vee \otimes \C^\times$ 
is the third arrow of the divisor sequence \eqref{eq:divs} 
tensored with $\C^\times$. 
A point $z = (z_1,\dots,z_m)$ of the right-hand side 
parametrizes  
\begin{itemize} 
\item 
a closed moduli $q = \exp(-\omega + iB) = \kappa_{\C^\times} (z)$;  

\item 
a Lagrangian torus fibre $L = L_\eta$ at 
$\eta = (-\log |z_1|,\dots, - \log |z_m|) 
\in P(\omega)^\circ$; 

\item 
a class $h=(z_1/|z_1|,\dots,z_m/|z_m|)\in H^2(X,L_\eta)$ which is a lift of 
$\exp(i B)$. 
\end{itemize}
We regard $W$ as a formal function on $\MM_{\text{opcl}}$ via 
these co-ordinates $(z_1,\dots, z_m)$. 
The open-closed moduli is fibred over $\MM_{\text{cl}}$: 
\[
\pi \colon \MM_{\text{opcl}} \to \MM_{\text{cl}}, \quad 
z \mapsto \kappa_{\C^\times}(z).  
\]
By pulling-back the co-ordinates $q_1,\dots,q_r$ by $\pi$, 
we obtain the same relation between $z_i$ and $q_a$ 
as in \eqref{eq:q-z}. 
The fibre $\MM_{\text{opcl},q}= \pi^{-1}(q)$ 
has the structure of an $(M_\R/M) \cong (S^1)^n$-bundle 
over $P(\omega)^\circ$ via the map: 
\[
\MM_{\text{opcl},q} \to P(\omega)^\circ, \quad 
(z_1,\dots, z_m) \mapsto \eta = 
(- \log |z_1|, \dots, - \log |z_m|).  
\] 
This is a torus fibration dual to the moment map 
$\Phi_\omega \colon X\to P(\omega)$; we can 
view it as a mirror of $(X,q)$. 

\begin{proposition} 
Via the co-ordinates $(z_1,\dots,z_m)$ on $\MM_{\rm opcl}$, 
the potential function of a Lagrangian torus fibre is identified 
with the following formal sum of functions on $\MM_{\rm opcl}$: 
\begin{equation*}
 W(q,L,h) = \sum_{
\beta \in \pi_2(X,L) \, : \, \mu(\beta)=2 } 
  n_\beta h(\beta)  e^{-\int_{\beta}\omega} 
\end{equation*}
where $q = \exp(-\omega+ i B)$. 
\end{proposition}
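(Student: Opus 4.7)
The plan is to reduce the statement to the single identity $z^\beta = h(\beta)\,e^{-\int_\beta \omega}$ for each relative class $\beta \in \pi_2(X,L) \cong H_2(X,L;\Z)$; once this is established, the formal sum $W = \sum_\beta n_\beta z^\beta$ of Definition \ref{def:W-general} transforms term-by-term into the asserted expression. Both sides of the target identity are multiplicative in $\beta$, i.e.\ the maps $\beta \mapsto z^\beta$ and $\beta \mapsto h(\beta)\,e^{-\int_\beta\omega}$ are both group homomorphisms from $H_2(X,L;\Z)$ to the appropriate multiplicative group. By freeness of the basis $\beta_1,\dots,\beta_m$ of $H_2(X,L;\Z)$ coming from \eqref{eq:discs}, it therefore suffices to verify $z_i = h_i\,e^{-\int_{\beta_i}\omega}$ for each $i=1,\dots,m$.

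The factor $h_i$ is accounted for tautologically: since $\{\beta_i\}$ and $\{D_i\}$ are dual bases of $H_2(X,L;\Z)$ and $H^2(X,L;\Z)$, the componentwise identification $H^2(X,L;U(1)) \cong U(1)^m$ used in \S\ref{subsubsec:open-closed} to write $h=(h_1,\dots,h_m)$ is precisely the one for which $h_i = h(\beta_i)$. Comparing with \eqref{eq:coordinate-z}, the remaining content is the area identity $\int_{\beta_i}\omega = \eta_i$. This is a standard toric computation: represent $\beta_i$ by the basic holomorphic disc $u_i(z) = [c_1,\dots,c_i z,\dots,c_m]$ from \eqref{eq:representative-betai}, where $(c_1,\dots,c_m) \in \phi^{-1}(\omega)$ is any lift of the chosen point of $L = \Phi_\omega^{-1}(\eta)$, so that $|c_j|^2 = \eta_j$. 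Using that the reduced K\"ahler form on $X = \phi^{-1}(\omega)/\G_\R$ pulls back to the restriction of the standard form on $\phi^{-1}(\omega) \subset \C^m$, one computes $\int_{u_i}\omega$ by integrating the standard form against the lift $\tilde u_i(z) = (c_1,\dots,c_iz,\dots,c_m)$; only the $i$-th coordinate varies, so the result equals $|c_i|^2 = \eta_i$ with the normalization fixed by the moment map convention $\Phi_\omega = (|x_1|^2,\dots,|x_m|^2)$.

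Substituting back gives $z_i = e^{-\eta_i}h_i = h(\beta_i)\,e^{-\int_{\beta_i}\omega}$, which is the desired basis-case identity; the proposition then follows by multiplicativity and termwise summation over $\beta$ with $\mu(\beta)=2$. The dependence on $(q,L,h)$ claimed in the statement is automatic once the identification $z^\beta = h(\beta)\,e^{-\int_\beta\omega}$ is in hand: the Lagrangian $L$ and the phases of the $h_i$'s enter through the polytope position $\eta$ and through $h$ itself, while $q=\exp(-\omega+iB)$ captures the closed-string data through its relation \eqref{eq:q-z} to the $z_i$'s. The only delicate point in the whole argument is the bookkeeping of normalization constants between the symplectic form, the moment map convention, and the polytope $P(\omega) \subset M_\R$; these are fixed consistently in \S\ref{subsec:toric}, so the constants absorb cleanly and no further argument is needed.
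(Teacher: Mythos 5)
Your skeleton is the same as the paper's: reduce, using that $\beta\mapsto h(\beta)$ and $\beta\mapsto z^\beta$ are multiplicative and $\beta\mapsto\int_\beta\omega$ is additive on $H_2(X,L;\Z)$, to the basis identity $z_i=h(\beta_i)\,e^{-\int_{\beta_i}\omega}$, i.e.\ to $h(\beta_i)=z_i/|z_i|$ and $\int_{\beta_i}\omega=\eta_i$. The paper simply quotes Cho--Oh (Theorem 8.1 of \cite{ChOh-Fl06}) for these two facts, whereas you try to prove the area identity directly, and that is where your argument has a genuine gap. The identity ``the reduced form pulls back to the restriction of the standard form'' is valid only on the level set $\phi^{-1}(\omega)$, but your lift $\tilde u_i(z)=(c_1,\dots,c_iz,\dots,c_m)$ is a lift through the holomorphic (GIT) quotient $\cU_\Sigma\to X$ and leaves the level set for $|z|<1$: its $\phi$-value is $\kappa(|c_1|^2,\dots,|c_i|^2|z|^2,\dots,|c_m|^2)\neq\omega$. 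So the reduction identity does not, as written, justify computing $\int_{\beta_i}\omega$ as $\int_{\D}\tilde u_i^*\omega_{\rm std}$; the step you present as the whole content of the proof is exactly the step left unproved.

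The conclusion is nonetheless correct, and the repair is standard. Either quote Cho--Oh as the paper does; or argue that, writing $p\colon\cU_\Sigma\to X$ for the quotient map, the closed form $p^*\omega_{\rm red}-\omega_{\rm std}$ is exact with a primitive that can be chosen to vanish along $\phi^{-1}(\omega)$ (the level set is a deformation retract of $\cU_\Sigma$ by Kempf--Ness), and since the boundary circle $\tilde u_i(\partial\D)$ does lie in the level set, Stokes gives $\int_{\D}\tilde u_i^*p^*\omega_{\rm red}=\int_{\D}\tilde u_i^*\omega_{\rm std}$; or, most directly, use that the basic disc is invariant under the circle subgroup generated by $b_i$ and sweeps from $D_i$ to $L$, so its area is the drop of the corresponding moment coordinate, namely $\eta_i$ up to the $2\pi$ fixed by the moment-map convention. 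That constant is precisely the ``bookkeeping of normalization'' you dismiss at the end; since the area identity is the only nontrivial content of the proposition, it should be pinned down (or outsourced to \cite{ChOh-Fl06}) rather than waved away.
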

\begin{proof} 
For $\beta = \beta_i$, we have 
(see \cite[Theorem 8.1]{ChOh-Fl06}) 
\[
h(\beta_i) = \frac{z_i}{|z_i|}, \quad 
\int_{\beta_i} \omega = \eta_i = - \log |z_i| 
\]
and thus $h(\beta_i) e^{-\int_{\beta_i} \omega} = z_i$ 
(cf.\ \eqref{eq:coordinate-z}). 
Therefore $h(\beta) e^{-\int_{\beta} \omega}= z^\beta$ 
for every $\beta$. 
\end{proof} 

\begin{remark} 
When $B=0$, the term $h(\beta)$ is the holonomy along 
the loop $\partial \beta\in \pi_1(L)$ of the $U(1)$-local 
system associated to $h$. 
This matches with the usual interpretation.  
In general, this term cannot be 
interpreted just as holonomy. 
\end{remark} 

Fukaya-Oh-Ohta-Ono \cite[Theorem 2.32]{FuOh-La10A} 
showed that the Jacobi algebra of the potential 
function restricted to the fibre $\MM_{\text{opcl},q} = \pi^{-1}(q)$ 
is isomorphic to the quantum cohomology ring 
of $(X,q)$ in a certain $q$-adic sense.

\subsection{Seidel elements for toric varieties 
and Givental's mirror transformation}  
\label{subsec:Seidel-toric} 
We review our previous computation \cite{GoIr-Se12} 
relating Seidel elements for toric varieties 
to Givental's mirror transformation \cite{Gi-A-98}. 
Let $X$ be a toric manifold from \S \ref{subsec:toric} 
with $c_1(X)$ semi-positive. 

\subsubsection{Seidel elements associated to 
the $\C^\times$-actions fixing toric divisors}

For each toric divisor $D_j$ of $X$, 
we can associate a $\C^\times$-action $\rho_j$ 
on $X$ rotating around $D_j$. 
It is given by: 
\[
\rho_j(\lambda) \colon 
[x_1,\dots, x_m]\longmapsto 
[x_1,\dots,\lambda^{-1}x_j, \dots, x_m], \qquad 
\lambda \in
\C^\times.  
\]
The toric divisor $D_j=\{x_j=0\}$ 
is the maximal fixed component of this action. 
Let $E_j$ denote the associated bundle of 
this $\C^\times$-action and let 
$S_j$ denote the corresponding Seidel element. 
We also write $S_j = q_0 \tS_j$ with $\tS_j \in QH^*(X)$ 
following Definition \ref{def:seidel}. 
Using the Seidel representation (see Remark \ref{rem:Seidelrep}), 
McDuff-Tolman \cite{McTo-To06} showed the 
following multiplicative relations 
in $QH(X)[q^{-d}\,:\, d\in \NE(X)_\Z]$:  
\begin{equation} 
\label{eq:Seidel-mult}
\prod_{j=1}^m \tS_j^{\lan D_j, d\ran} =q^d \qquad 
\text{for } \ d\in H_2(X;\Z). 
\end{equation} 

\subsubsection{Givental's mirror theorem} 
Givental \cite{Gi-A-98} introduced 
the two cohomology-valued functions  
\begin{align*} 
I(y,z) & =e^{\sum_{i=1}^r p_i\log y_i/z} 
\sum_{d\in \NE(X)_\Z} \prod_{i=1}^m 
\left(
\frac{\prod_{k=-\infty}^0
  (D_i+kz)}{ \prod_{k=-\infty}^{\lan D_i,
    d\ran}(D_i+kz)}
\right)
y^d \\ 
J(q,z) & = e^{\sum_{i=1}^r p_i \log q_i/z} 
\left( 1 + 
\sum_{j}  
\sum_{d\in \NE(X)_\Z \setminus\{0\}} 
\lan \frac{\phi_j}{z(z-\psi)} \ran^X_{0,1,d} 
\phi^j q^d\right) 
\end{align*} 
called the \emph{$I$-function} and the \emph{$J$-function} 
respectively.  
Here we used a nef basis $\{p_1,\dots,p_r\} \subset H^2(X)$ 
in \S \ref{subsec:toric} and write
\[
q^d = q_1^{\lan p_1, d\ran} \cdots q_r^{\lan p_r, d\ran},  
\qquad 
y^d = y_1^{\lan p_1, d\ran} \cdots y_r^{\lan p_r, d\ran}, 
\]
and $\{\phi_j\}$ and $\{\phi^j\}$ are mutually 
dual basis of $H^*(X)$. 
The variables $y =(y_1,\dots,y_r)$ are called 
\emph{mirror co-ordinates}, 
i.e.\ co-ordinates of the complex moduli of the mirror Landau-Ginzburg 
model. 
Givental \cite{Gi-A-98} showed the following \emph{mirror theorem}: 

\begin{theorem}[\cite{Gi-A-98}]  
We have $I(y,z) = J(q,z)$ under a change of coordinates 
of the form 
$\log q_i= \log y_i + g_i(y)$, $i=1,\dots,r$, 
$g_i(y) \in \Q[\![y_1,\dots,y_r]\!]$ with $g_i(0)=0$. 
The functions $g_i(y)$ here 
are uniquely determined by the asymptotics: 
\[
I(y,z) = e^{\sum_{i=1}^r p_i \log y_i/z} \left(1 +
  \sum_{i=1}^r g_i(y) \frac{p_i}{z} + o(z^{-1})
\right).
\]
\end{theorem}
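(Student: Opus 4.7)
The plan is to apply Givental's symplectic formalism for genus-zero Gromov--Witten theory. Consider the Givental space $\mathcal{H}$ of formal Laurent series in $z^{-1}$ with values in $H^*(X;\Q)$, equipped with the symplectic form $\Omega(f,g) = \operatorname{Res}_{z=0}(f(-z),g(z))\,dz$ using the Poincar\'e pairing. Let $\mathcal{L}_X \subset \mathcal{H}$ be the Givental Lagrangian cone encoding all genus-zero descendent Gromov--Witten invariants of $X$. A fundamental property, proved from the topological recursion relations, is that $\mathcal{L}_X$ is ruled by its tangent subspaces $T_f\mathcal{L}_X = z T_f \mathcal{L}_X$, and that each point of $\mathcal{L}_X$ is uniquely determined by the positive part of its asymptotic expansion at $z = \infty$ along the rulings.

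The first step is to identify $J(q,z)$ as the canonical slice of $\mathcal{L}_X$: the string and divisor equations imply that $-zJ(q,-z)$ lies on $\mathcal{L}_X$ and its image in $\mathcal{H}/z^{-1}H^*(X)[\![z^{-1}]\!]$ equals $-z + \sum_i p_i \log q_i$. This characterizes $J$ uniquely among families with this leading form.

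The second step, which is the principal obstacle, is to show that $I(y,z)$ also lies on $\mathcal{L}_X$: this is the content of Givental's toric mirror theorem, and requires substantial input from equivariant virtual localization. The standard proof evaluates $\C^\times$-equivariant integrals on the graph space $\overline{\mathcal{M}}_{0,0}(X \times \P^1,(d,1))$ by localization with respect to the standard $\C^\times$-action on $\P^1$: the hypergeometric factor $\prod_{i,k}(D_i+kz)^{-1}$ arises as the inverse equivariant Euler class of the virtual normal bundle to the fixed locus parametrizing sections with a single degree-$d$ bubble at $0 \in \P^1$, while the remaining fixed-locus contributions assemble, via the string equation and the Coates--Givental recursion, into a family of tangent spaces to $\mathcal{L}_X$. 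Alternative routes include Brown's localization theorem for toric fibrations or the quasimap/stable-map wall-crossing of Ciocan-Fontanine--Kim.

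Once both $I(y,z)$ and $J(q,z)$ are known to lie on $\mathcal{L}_X$ with the same leading exponential factor $e^{\sum p_i \log(\cdot)/z}$, the mirror map is extracted from the $z^{-1}$-coefficient of $I$. One defines $g_i(y)$ by the asymptotic expansion
\[
I(y,z) = e^{\sum_i p_i \log y_i /z}\Bigl(1 + \sum_i g_i(y)\frac{p_i}{z} + o(z^{-1})\Bigr)
\]
and sets $\log q_i := \log y_i + g_i(y)$. Under this substitution, $-zJ(q(y),-z)$ and $-zI(y,-z)$ are two points of $\mathcal{L}_X$ with identical image in $\mathcal{H}/z^{-1}H^*(X)[\![z^{-1}]\!]$, hence coincide by the uniqueness in step one. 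Semi-positivity of $c_1(X)$, while not needed for the general mirror theorem, ensures that the asymptotic expansion of $I$ takes this clean form with the $z^{-1}$-coefficient lying in $H^{\le 2}(X) \otimes \Q[\![y]\!]$, so that the mirror transformation is captured entirely by scalar functions $g_i(y)$ along the divisor directions.
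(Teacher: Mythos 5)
The paper does not prove this statement at all: it is quoted verbatim from Givental \cite{Gi-A-98} (see also the discussion in \cite{GoIr-Se12}), so there is no internal proof to compare yours against. Judged on its own, your outline correctly describes the modern cone-theoretic packaging: $J$ is the unique slice of Givental's Lagrangian cone $\mathcal{L}_X$ with $H_+$-projection $-z+\sum_i p_i\log q_i$, and once one knows that $-zI(y,-z)$ also lies on $\mathcal{L}_X$, extracting $g_i$ from the $z^{-1}$-asymptotics and matching $H_+$-projections does give $I(y,z)=J(q(y),z)$; your uniqueness step and the final remark about semi-positivity (the $z^{-1}$-coefficient has no $H^0$-part because no effective $d$ has all $\langle D_i,d\rangle\ge 0$ with $\langle c_1,d\rangle=1$, by the fan relation $\sum_i\langle D_i,d\rangle b_i=0$) are sound.

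The genuine gap is step two. You write that $I\in\mathcal{L}_X$ ``is the content of Givental's toric mirror theorem,'' which is precisely the statement being proved, so as a self-contained argument nothing has been established. Moreover, the mechanism you sketch is not accurate as stated: localization on the stable-map graph space $\overline{\mathcal{M}}_{0,0}(X\times\P^1,(d,1))$ with respect to the $\C^\times$-action on $\P^1$ produces fixed-locus contributions involving descendent invariants of $X$, i.e.\ $J$-type factors $1/(z(z-\psi))$, not the explicit hypergeometric product $\prod_{i,k}(D_i+kz)^{\pm1}$. That product arises either on the quasimap/linear-sigma-model side (where the graph-space $\C^\times$-fixed contribution is literally the $I$-function, and the comparison with $J$ is the wall-crossing of Ciocan-Fontanine--Kim), or, in Givental's original argument, from fixed-point localization with respect to the \emph{big torus} acting on $X$ together with the recursion relations and the polynomiality (``double construction'') property characterizing points of the cone. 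To make your proof complete you must either carry out one of these arguments or cite it as the substantive input, rather than fold it into the statement itself; the cone formalism by itself only reduces the theorem to that step, it does not supply it.
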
 

The change of co-ordinates is 
called \emph{mirror transformation} (or \emph{mirror map}). 

\subsubsection{Batyrev elements and Seidel elements} 
\label{subsubsec:Bat-Sei}
In \cite{GoIr-Se12}, we introduced \emph{Batyrev elements} 
$\tD_j$, $j=1,\dots,m$. They are defined by 
\[
\tD_j := \sum_{a=1}^r m_{aj} \tp_a, \quad 
\tp_a := \sum_{b=1}^r \parfrac{\log q_b}{\log y_a} p_b.  
\]
Note that $\tD_j$ is an element corresponding to 
the vector field $\sum_{a=1}^r m_{aj} y_a \partial/\partial y_a$ 
whereas the genuine divisor class $D_j$ corresponds to 
the vector field $\sum_{a=1}^r m_{aj} q_a \partial/\partial q_a$ 
(see \eqref{eq:div-basis}). 
Batyrev elements are determined by, and 
determine the Jacobi matrix $(\partial \log q_b/\partial \log y_a)$ 
of the mirror transformation. 
Using Givental's mirror theorem, we find that 
the Batyrev elements satisfy the multiplicative relations  
(see \cite[Proposition 3.8]{GoIr-Se12}) 
\[
\prod_{j=1}^m \tD_j^{\lan D_j, d\ran} = y^d 
\qquad 
d\in H_2(X;\Z) 
\]
in the quantum cohomology ring.  
These are very similar to the multiplicative relations \eqref{eq:Seidel-mult} 
of Seidel elements, but note that co-ordinates $q$ are 
replaced with mirror co-ordinates $y$. 
Moreover, the Batyrev elements satisfy the following 
\emph{linear relations}: 
\begin{equation} 
\label{eq:linear_relation} 
\sum_{i=1}^m c_j \tD_j =0 \quad 
\text{whenever} \quad 
\sum_{i=1}^m c_j D_j = 0.  
\end{equation} 
The linear relations are obvious from the definition. 
These multiplicative and linear relations show that $\tD_j$ 
satisfy the relations of Batyrev's quantum ring \cite{Ba-Qu93}. 
It turns out that the Seidel elements are multiples of the Batyrev elements. 

\begin{theorem}[{\cite[Theorem 1.1]{GoIr-Se12}}]  
\label{thm:g0}
Let $g_0^{(j)}(y)$ be the following hypergeometric series 
in mirror co-ordinates: 
\begin{equation} 
\label{eq:g0j} 
g_0^{(j)}(y_1,\dots,y_r) 
=\sum_{\substack{\lan c_1(X), d\ran =0, 
        \lan D_j, d\ran<0\\
        \lan D_i, d\ran \geq
        0\ \text{for all } i\neq j}}
    \frac{(-1)^{\lan D_j, d\ran } 
     \left(- \lan D_j, d\ran -1\right)!}
{\prod_{i\neq j}\lan D_i, d\ran!}y^d.
\end{equation} 
Then under the mirror transformation we have 
\[
\tS_j = \exp\left(-g_0^{(j)}(y)\right) \tD_j. 
\]
\end{theorem}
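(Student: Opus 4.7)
The strategy is to exploit that the associated bundle $E_j$ is itself a smooth projective toric manifold and to apply Givental's mirror theorem to $E_j$ to compute $\tS_j$ explicitly in the mirror coordinates $y$. The fan of $E_j$ lies in $N\oplus\Z$ and is obtained from that of $X$ by lifting each ray $b_i$ to a vector whose last coordinate records its $\rho_j$-weight, together with two extra rays $(0,\pm 1)$ for the base $\P^1$; the $\C^\times$-weight $-1$ on the normal bundle of $F_{\rm max}=D_j$ is encoded by the last coordinate of the lift of $b_j$ being $-1$. This $-1$ is the combinatorial source of the series $g_0^{(j)}(y)$.

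First, I would write down the $I$-function $I_{E_j}$ of $E_j$ in the nef presentation, keeping track of the extra K\"ahler parameter $q_0 = q^{\sigma_0}$ dual to the maximal section. By the decomposition \eqref{eq:split-sigma0}, section classes are $\sigma_0+d$ for $d\in\NE(X)_\Z$, so the coefficient of $q_0$ in $I_{E_j}$ packages exactly the one-point invariants of $E_j$ in such section classes; together with the presentation of the Seidel element via such invariants in the Remark following Definition \ref{def:seidel}, this identifies $\tS_j$ with an explicit piece of $I_{E_j}$ once we pass from the $E_j$-mirror coordinates to those of $X$. The $-1$-weight ray contributes a hypergeometric factor whose expansion at $z=0$ is nonconstant precisely for those $d\in\NE(X)_\Z$ with $\langle c_1(X),d\rangle=0$, $\langle D_j,d\rangle<0$ and $\langle D_i,d\rangle\ge 0$ for $i\neq j$, and a direct calculation of the resulting residue gives the sign $(-1)^{\langle D_j,d\rangle}$ and factorial $(-\langle D_j,d\rangle-1)!$ appearing in \eqref{eq:g0j}.

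Since $\tD_j$ is defined to absorb the ordinary mirror transformation of $X$, matching the leading $z^{-1}$-asymptotics of $I_{E_j}$ with those of the $I$-function of $X$ and using the multiplicative relations \eqref{eq:Seidel-mult} together with $\prod_j\tD_j^{\langle D_j,d\rangle}=y^d$ yields the identity $\tS_j=\exp(-g_0^{(j)}(y))\tD_j$. The main obstacle is the identification step: one must cleanly separate the portion of $I_{E_j}$ that computes $\tS_j$ from the mirror-transformation contributions of $X$ itself, which amounts to analyzing the K\"unneth splitting $H^2(E_j)\cong H^2(X)\oplus\Z$ carefully, controlling how the mirror map of $E_j$ restricts along the section direction, and using $c_1(X)$-semi-positivity to discard higher-order quantum corrections that would otherwise obscure the clean formula \eqref{eq:g0j}.
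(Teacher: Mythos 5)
This theorem is only quoted here from \cite{GoIr-Se12}, and your strategy --- realize $E_j$ as a toric manifold (semi-positive, by \eqref{eq:split-sigma0} and semi-positivity of $X$), extract $\tS_j$ from the section-class ($q_0$-linear) part of its $J$-function via the one-point-invariant presentation in the remark after Definition \ref{def:seidel}, and compare the mirror map of $E_j$ with that of $X$, with the ray over $b_j$ (weight $-1$) producing the hypergeometric correction $g_0^{(j)}$ --- is essentially the proof given in that cited paper. The points you flag as the main obstacle (identifying the $q_0$-coefficient of $J_{E_j}$ with the Seidel element by dimension and semi-positivity arguments, and showing the mirror map of $E_j$ splits as $\log q_0 = \log y_0 + g_0^{(j)}(y)$ on top of the mirror map of $X$, which is what the Batyrev elements encode) are exactly the steps carried out there, so your outline matches the original argument rather than offering a genuinely different route.
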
 

Conversely, one can recover the Batyrev elements from 
the Seidel elements in the following way. 

\begin{theorem}[{\cite[Theorem 1.2]{GoIr-Se12}}]
\label{thm:GI2} 
Given the Seidel elements $\tS_1,\dots,\tS_m$, 
the Batyrev elements 
$\tD_j \in H^*(X)\otimes \Q[\![q_1,\dots,q_r]\!]$, 
$j=1,\dots,m$ are uniquely characterized by  
the following conditions: 
\begin{enumerate}
\item $\tD_j = H_j \tS_j$ for some $H_j\in \Q[\![q_1,\dots,q_r]\!]$; 
\item $\tD_j = \tS_j$ if $b_j$ is a vertex of the fan polytope; 
\item $\tD_j$ satisfy the linear relations 
\eqref{eq:linear_relation}. 
\end{enumerate} 
In particular, the Seidel elements determine 
the mirror transformation $q\mapsto y$ 
and the functions $g_0^{(j)}(y)$. 
\end{theorem}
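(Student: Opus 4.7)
The plan is to establish existence and uniqueness of a collection $(\tD_j)_{j=1}^m$ satisfying (1)--(3), with uniqueness being the substantive part.

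For existence, I would take $H_j(q) := \exp\bigl(g_0^{(j)}(y(q))\bigr)$, with $y=y(q)$ the formal inverse of the mirror transformation from Theorem \ref{thm:g0}. Condition (1) is then Theorem \ref{thm:g0}; condition (3) is built into the definition $\tD_j = \sum_a m_{aj}\tp_a$ together with the linear relations among the rows of $(m_{aj})$ recorded in \eqref{eq:div-basis}. For condition (2), I would show $g_0^{(j)} \equiv 0$ whenever $b_j$ is a vertex of the fan polytope: any class $d$ contributing to the summation \eqref{eq:g0j} satisfies $\sum_i \lan D_i, d\ran = 0$, $\lan D_j, d\ran < 0$, and $\lan D_i, d\ran \ge 0$ for $i\ne j$, so the fan relation $\sum_i \lan D_i, d\ran b_i = 0$ would exhibit $b_j$ as a convex combination of $\{b_i : i\ne j\}$, contradicting vertex-hood.

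For uniqueness, suppose $\tD_j' = H_j'\tS_j$ is a second collection satisfying (1)--(3). I would subtract the linear relations (3) for $\tD_j$ and for $\tD_j'$ to obtain, for every $v\in M$,
\[
\sum_{j=1}^m \lan b_j, v\ran L_j \tS_j = 0, \qquad L_j := H_j' - H_j \in \Q[\![q_1,\dots,q_r]\!],
\]
with $L_j \equiv 0$ for each vertex index $j\in V$ by (2). Assuming some $L_j \ne 0$, I would pick $d_0$ minimal (for any monomial order refining the Mori partial order) such that $L_{j,d_0} \ne 0$ for some $j$. Using the classical limit $\tS_j|_{q=0} = D_j$, extracting the $q^{d_0}$-coefficient yields
\[
\sum_{j=1}^m \lan b_j, v\ran L_{j,d_0} D_j = 0 \quad\text{in } H^2(X;\Q), \quad \forall v\in M.
\]
Since $\ker(\kappa\colon \Z^m \to \bL^\vee) = \Im(M\to\Z^m)$, for each $v\in M$ there is a unique $T(v)\in M_\Q$ satisfying $\lan b_j, T(v)\ran = L_{j,d_0}\lan b_j, v\ran$ for all $j$; this defines a linear map $T\colon M_\Q\to M_\Q$, and the vertex condition gives $\lan b_j, T(v)\ran = 0$ for $j\in V$.

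The crux is the toric lemma that the vertices $\{b_j:j\in V\}$ linearly span $N_\Q$: completeness of the fan places $0$ in the interior of $\operatorname{conv}(b_1,\dots,b_m)$, and every non-vertex $b_i$ is a convex combination of vertices, so the vertices span. Hence $T \equiv 0$, so $L_{j,d_0}\lan b_j, v\ran = 0$ for all $j, v$; since each $b_j \ne 0$, this forces $L_{j,d_0} = 0$ for every $j$, contradicting the choice of $d_0$. Therefore $L_j \equiv 0$ and $\tD_j = \tD_j'$. The ``in particular'' statement then follows: $y^d$ is recovered as the scalar coefficient of the unit class in $\prod_j \tD_j^{\lan D_j, d\ran}$ (giving the mirror transformation), after which $g_0^{(j)}(y) = \log H_j(q(y))$ with $H_j = \tD_j\tS_j^{-1}$. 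The principal obstacle I anticipate is the reformulation of the $q^{d_0}$-order relation as an endomorphism $T$ of $M_\Q$ and its annihilation via the vertex-span lemma; once those are in place, the induction on $d_0$ runs mechanically.
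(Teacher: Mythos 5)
Your argument is correct, and it is worth noting that the paper itself gives no proof of this statement --- it is quoted from \cite[Theorem 1.2]{GoIr-Se12} --- so the only comparison available is with the surrounding machinery the paper does supply. Your existence step is exactly what that machinery provides: condition (1) is Theorem \ref{thm:g0}, condition (3) is immediate from the definition of $\tD_j$, and your vertex-vanishing argument for \eqref{eq:g0j} (the fan relation $\sum_i \lan D_i,d\ran b_i=0$ together with $\lan c_1(X),d\ran=0$, $\lan D_j,d\ran<0$, $\lan D_i,d\ran\ge 0$ exhibiting $b_j$ as a convex combination of the other $b_i$) is precisely the mechanism behind Proposition \ref{prop:vertex-vanishing}. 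Your uniqueness step --- induction on a monomial order refining the partial order on exponents, extraction of the lowest-order coefficient using $\tS_j=D_j+O(q)$, identification of the resulting relation with an element $T(v)\in M_\Q$ via exactness of \eqref{eq:divs}, and annihilation of $T$ because the vertices of the fan polytope span $N_\Q$ (completeness of the fan puts $0$ in the interior of the polytope) --- is sound and self-contained. Two small points you should make explicit: first, deducing $H_j=1$ (hence $L_j=0$) at vertex indices from $\tD_j=\tS_j$ uses that $H^*(X)\otimes\Q[\![q_1,\dots,q_r]\!]$ is a torsion-free module over the integral domain $\Q[\![q_1,\dots,q_r]\!]$ and that $\tS_j\neq 0$; second, the classical limit $\tS_j\equiv D_j \pmod{q}$ deserves a justification (it follows either from McDuff--Tolman or from Theorem \ref{thm:g0} itself, since $\tD_j=D_j+O(y)$, $g_0^{(j)}=O(y)$ and the mirror map is the identity to lowest order). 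For the ``in particular'' clause, your recovery of $y^d$ from the quantum product $\prod_j\tD_j^{\lan D_j,d\ran}$ needs the quoted multiplicative relation and invertibility of $\tD_j$ in a localization (which holds since $\tS_j$ is invertible and $H_j=1+O(q)$); an alternative, closer to how the paper phrases it in \S\ref{subsubsec:Bat-Sei}, is to solve $\tD_j=\sum_a m_{aj}\tp_a$ for the $\tp_a$ (the matrix \eqref{eq:div-basis} has full rank $r$) and read off the Jacobi matrix of the mirror transformation, then integrate using $g_i(0)=0$. With those clarifications your proof is complete.
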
 

\subsection{Correction terms of potential functions and 
Seidel elements}
Chan-Lau-Leung-Tseng \cite{CLLT11} gave a 
conjecture relating the correction terms of 
the potential function and the Seidel elements 
for a semi-positive toric manifold. 
\begin{conjecture}[{\cite[Conjecture 5.2]{CLLT11}}]
\label{conj:CLLT} 
For a semi-positive toric manifold, 
the correction term $f_j(q)$ of the potential function 
(Definition \ref{def:correctionterm}) coincides with 
$\exp(g_0^{(j)}(y))$ in Theorem \ref{thm:g0} 
under mirror transformation.  
\end{conjecture}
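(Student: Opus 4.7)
Our plan is to verify that $f_j(q)\,\tS_j$ satisfies the three conditions characterizing the Batyrev element $\tD_j$ in Theorem \ref{thm:GI2}. Once the equality $\tD_j = f_j(q)\,\tS_j$ is established, Theorem \ref{thm:g0} (which gives $\tS_j = \exp(-g_0^{(j)}(y))\,\tD_j$) forces $f_j(q) = \exp(g_0^{(j)}(y))$ under the mirror transformation.

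The key input is Theorem \ref{thm:degformula-toric-introd}, the toric specialization of the degeneration formula, which yields $\langle \hS_j, dw_k\rangle = \delta_{jk}\,z_j$ and consequently $\KS(\tS_j) = [z_j]$ in $\Jac(W)$. Conditions (1) and (2) of Theorem \ref{thm:GI2} are essentially immediate: (1) $f_j(q) \in \Q[\![q_1,\dots,q_r]\!]$ by Definition \ref{def:correctionterm} together with Proposition \ref{prop:effdisc-semipos}, and (2) $f_j(q)=1$ whenever $b_j$ is a vertex of the fan polytope by Proposition \ref{prop:vertex-vanishing}.

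The heart of the argument is condition (3), the linear relations. Fix $(c_1,\dots,c_m)$ with $\sum_j c_j D_j = 0$ in $H^2(X)$. Writing $D_j = \sum_a m_{aj}\,p_a$, this is equivalent to $\sum_j c_j m_{aj} = 0$ for every $a$, which by the coordinate identification $q_a = \prod_j z_j^{m_{aj}}$ of \eqref{eq:q-z} says precisely that the vector field $\xi := \sum_j c_j z_j\,\partial_{z_j}$ on $\MM_{\text{opcl}}$ is tangent to the fibres of $\pi\colon \MM_{\text{opcl}}\to\MM_{\text{cl}}$. Since the correction terms $f_k(q)$ are pulled back from the base, $\xi(f_k)=0$, and hence
\begin{equation*}
\xi(W) \;=\; \sum_k \xi\bigl(f_k(q)\,z_k\bigr) \;=\; \sum_k f_k(q)\,c_k\,z_k \;=\; \sum_k c_k\,w_k.
\end{equation*}
As a derivation along the fibre, $\xi(W)$ lies in the ideal defining $\Jac(W)$, so $\bigl[\sum_k c_k\,w_k\bigr]=0$ in $\Jac(W)$. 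Using $\KS(\tS_j)=[z_j]$ and $\Lambda$-linearity of $\KS$ we obtain
\begin{equation*}
\KS\!\left(\sum_j c_j\,f_j(q)\,\tS_j\right) \;=\; \sum_j c_j\,f_j(q)\,[z_j] \;=\; \left[\sum_j c_j\,w_j\right] \;=\; 0.
\end{equation*}
Invoking the Fukaya-Oh-Ohta-Ono isomorphism $QH(X,q) \cong \Jac(W|_{\pi^{-1}(q)})$ in the $q$-adic sense, injectivity of $\KS$ on the relevant range of $H^{\le 2}(X)\otimes\Lambda$ promotes this to the desired equality $\sum_j c_j\,f_j(q)\,\tS_j = 0$ in $QH(X)\otimes\Lambda$. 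Theorem \ref{thm:GI2} then yields $\tD_j = f_j(q)\,\tS_j$, and the conclusion follows from Theorem \ref{thm:g0} as explained above.

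The main technical obstacle is the careful use of the FOOO isomorphism to upgrade the Jacobi-algebra vanishing to an exact identity in $QH(X)\otimes\Lambda$: the statement of FOOO is formulated $q$-adically on fibres of $\pi$, and one must verify that the Novikov completions, fibrewise injectivity, and the fact that each $\tS_j$ genuinely lies in $H^{\le 2}(X)\otimes\Lambda$ combine so that the identity holds exactly as a power series, not merely modulo a high power of the maximal ideal.
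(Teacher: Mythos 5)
Your overall strategy is the same as the paper's: use the degeneration formula to get $\langle \hS_j, dw_k\rangle = \delta_{jk}z_j$ (equivalently $\KS(\tS_j)=[z_j]$), then show that $B_j := f_j(q)\tS_j$ satisfies the three conditions of Theorem \ref{thm:GI2}, so that $B_j=\tD_j$ and Theorem \ref{thm:g0} gives $f_j=\exp(g_0^{(j)})$. Conditions (1), (2) and your computation that $\sum_k c_k w_k = \langle \delta\psi, dW\rangle$ whenever $\sum_j c_j D_j=0$ (with $c_j=\langle\psi,b_j\rangle$, $\psi\in M\otimes\Q$, by exactness of the divisor sequence) are all fine. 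The gap is in the last step: you deduce $\sum_j c_j f_j(q)\tS_j=0$ from vanishing in $\Jac(W)$ by invoking injectivity of $\KS$ via the Fukaya--Oh--Ohta--Ono isomorphism $QH(X)\cong \Jac$. That theorem is stated for a differently completed Jacobi ring (Novikov-ring coefficients with energy filtration, closure of the Jacobi ideal), not for the naive $\Jac(W)=R/R\langle H^1(L),dW\rangle$ over $R=\Q[\![\NE(X)_\Z+(\Z_{\ge0})^m]\!]$ used here, and you acknowledge but do not carry out the comparison of completions and the restriction of injectivity to $H^{\le 2}(X)\otimes K$. As written this step is not established; moreover it reintroduces exactly the heavy external input that this degeneration approach (and the original difficulty with convergence hypotheses in \cite{CLLT11}) is designed to avoid.

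The gap is avoidable, and this is where the paper's route differs: your vanishing already holds before passing to $\Jac(W)$. The element $\sum_j c_j f_j(q)[z_j]$ is literally the tuple $\bigoplus_j \langle\psi,b_j\rangle w_j$, i.e.\ one of the generators of the $K$-submodule of $\bigoplus_j K z_j$ cut out by $H^1(L)$, so the relation holds in the finitely generated $K$-module $\bigoplus_j Kz_j\big/\langle \bigoplus_j\langle\varphi,b_j\rangle w_j : \varphi\in M\rangle_K$. On that module injectivity is elementary: the map $\frks\colon H^2(X,L)\otimes K\to\bigoplus_j Kz_j$, $D_i\mapsto (z_i\partial w_j/\partial z_i)_j$, has matrix $\delta_{ij}+O(q)$ in the basis $z_j$, hence is an isomorphism of free $K$-modules, and it carries $\delta(M)\otimes K$ onto the above submodule, so the induced map $\ks$ on $H^2(X)\otimes K$ is injective by the divisor sequence. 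Replacing your appeal to the FOOO isomorphism by this observation closes the argument with no unverified input; only afterwards does one pass to $\Jac(W)$, where no injectivity claim is needed.
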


Originally Chan-Lau-Leung-Tseng \cite{CLLT11} 
proved this conjecture under the convergence assumption for $W$ 
using an isomorphism \cite{FuOh-La10A} of Jacobi ring and quantum cohomology. 
Recently they gave an alternative proof \cite{CLLT12} 
which does not require the convergence assumption. 
They identified open Gromov-Witten invariants  
with certain closed Gromov-Witten invariants of the 
associated bundle $E'_i$ given by the inverse 
$\C^\times$-action $\rho_i^{-1}$. 
They used the fact that a bordered stable map 
to $(M,L)$ with boundary class $b_i \in N \cong H_1(L)$ 
can be completed to a holomorphic sphere in the 
associated bundle $E_i'$. 
This is closely related to the fact that the central fibre 
$\ocE_0$ of the closing in \S \ref{ss:def}
is the union of the two associated bundles $E$ and $E'$ 
which correspond to mutually inverse $\C^\times$-actions. 

\subsection{Degeneration formula for toric manifolds} 
\begin{proposition} 
\label{prop:toric-assump} 
Assumption \ref{assump:moduli} holds for a pair $(X,L)$ 
equipped with the $\C^\times$-action $\rho_j$ around the 
prime toric divisor $D_j$  
we considered in \S \ref{subsec:Seidel-toric}. 
\end{proposition}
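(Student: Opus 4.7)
The plan is to verify each of the four parts of Assumption \ref{assump:moduli} separately. Part (i) is immediate from Proposition \ref{prop:effdisc-semipos}, and (iii) is the standing semi-positivity hypothesis on $X$. For (ii), the action $\rho_j(\lambda)\colon x_j\mapsto \lambda^{-1}x_j$ has induced $S^1$-Hamiltonian proportional (up to a constant) to $-|x_j|^2$, so it attains its maximum precisely on the locus $\{x_j=0\}=D_j$, giving $F_{\rm max}=D_j$ of complex codimension one. The normal direction at $D_j$ is spanned by $\partial/\partial x_j$, which transforms under $\rho_j(\lambda)$ by the Jacobian $\lambda^{-1}$, so the weight on the normal bundle is $-1$, as required.

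The main content is (iv). Let $\sigma\in H_2^{\rm sec}(E_j)$ satisfy $\langle c_1^{\rm vert}(E_j),\sigma\rangle = -1$. By \eqref{eq:split-sigma0} we write $\sigma=\sigma_0+d$ with $d\in \NE(X)_\Z$; since (ii) gives $\langle c_1^{\rm vert}(E_j),\sigma_0\rangle=-1$, we obtain $\langle c_1(X),d\rangle=0$, i.e.\ $d$ is a Calabi--Yau curve class, and $\vdim_\R \cM_{\rm S}(\sigma) = 2n-2$ by \eqref{eq:vdim-Seidel}. Since $L$ is a Lagrangian torus fibre over an interior point of $P(\omega)$, it lies in the open $\T$-orbit and is disjoint from the union $\bigcup_i D_i$ of toric divisors. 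Hence it suffices to show $\ev(\cM_{\rm S}(\sigma))\subset \bigcup_i D_i$.

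The strategy combines torus equivariance with the virtual dimension bound. The big complex torus $\T$ commutes with $\rho_j\subset \T$ and so acts on $X\times(\C^2\setminus\{0\})$ through its action on the first factor; this descends to a fibrewise $\T$-action on $E_j$ that is trivial on the base $\P^1$. Consequently $\cM_{\rm S}(\sigma)$ is $\T$-equivariant and $\ev\colon \cM_{\rm S}(\sigma)\to X$ is $\T$-equivariant. The image $\ev(\cM_{\rm S}(\sigma))$ is therefore a compact $\T$-invariant subset of $X$. Any closed $\T$-invariant analytic subset of the toric manifold $X$ is a union of toric subvarieties $\bigcap_{i\in I} D_i$, so any such subset of positive codimension is contained in $\bigcup_i D_i$. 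Combined with the dimension bound on $\ev(\cM_{\rm S}(\sigma))$ inherited from the virtual dimension $n-1$, this yields the desired conclusion.

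The delicate point will be promoting the virtual dimension bound to an honest dimension bound on the geometric image, since $\cM_{\rm S}(\sigma)$ is a priori only a Kuranishi space. One can handle this by a $\T$-equivariant multisection perturbation in the spirit of \cite[\S 11]{FuOh-La10}, ensuring that the perturbed zero locus remains $\T$-invariant and has the expected codimension. Alternatively, one can argue directly using the GLSM description of the toric manifold $E_j$: a section of class $\sigma_0$ is forced to lie in the prime toric divisor $\widetilde{D}_j\subset E_j$ corresponding to $D_j$, and when $\langle c_1(X),d\rangle=0$ the Calabi--Yau and semi-positivity conditions constrain the possible bubble trees so that the whole stable map is confined to toric boundary strata of $E_j$, with evaluation image inside $\bigcup_i D_i\subset X$.
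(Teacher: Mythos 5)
Your verification of (i)--(iii) is fine and matches the paper (which treats (ii),(iii) as immediate and quotes Proposition \ref{prop:effdisc-semipos} for (i)). The problem is (iv), which is the only substantive point, and there your argument has a genuine gap. Assumption \ref{assump:moduli} (iv) is a statement about the honest, unperturbed moduli space: it is used in Corollary \ref{cor:boundary-fibreproduct-empty} to conclude that the fibre product $\cM_{\rm S}(\sigma)\times_M \cM_{1,1}^{\rm rel}(\halpha)$ has \emph{no boundary}, and in \eqref{eq:hcS} to place the perturbed cycle in $M\setminus \nu(L)$. Your first route controls only a virtual dimension: the virtual dimension $2n-2$ from \eqref{eq:vdim-Seidel} does not bound the actual dimension of $\cM_{\rm S}(\sigma)$ or of its image, and passing to a ($\T$-equivariant) multisection perturbation changes the space whose image you are constraining, so disjointness of a \emph{perturbed} zero locus from $L$ would not give the boundary statement needed later (besides which equivariant transversality is not available in general). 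Even granting that the actual image $\ev(\cM_{\rm S}(\sigma))$ is closed and $\T$-invariant, to conclude it lies in $\bigcup_i D_i$ you must rule out that it meets the open $\T$-orbit (equivalently, that it is all of $X$), and equivariance plus a virtual-dimension count cannot do that. Your second, ``GLSM'' route simply asserts the needed confinement of the section component and of the bubble trees to the toric boundary of $E_j$; that assertion is exactly what has to be proved.

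The paper closes this gap by a short intersection-number argument, with no transversality or equivariance input. Decompose a stable map $u\colon C\to E_j$ of class $\sigma$ into irreducible components $C_\alpha$. The unique component with section class has $\lan c_1^{\rm vert}(E_j), u_*[C_\alpha]\ran \ge -1$ by \eqref{eq:split-sigma0} and semi-positivity, and every fibre component has $\lan c_1^{\rm vert}(E_j), u_*[C_\alpha]\ran = \lan c_1(X), u_*[C_\alpha]\ran \ge 0$; since the total is $-1$, the section component contributes exactly $-1$ and each fibre component exactly $0$. If some non-constant $C_\alpha$ had image not contained in $\bigcup_i \hD_i$, then $\lan \hD_i, u_*[C_\alpha]\ran \ge 0$ for every $i$, while $\sum_i \hD_i$ is Poincar\'e dual to $c_1^{\rm vert}(E_j)$; the section component is then excluded outright (its total is $-1$), and a fibre component would have all these intersection numbers zero, i.e.\ $\lan D_i, u_*[C_\alpha]\ran =0$ for all $i$ in $H_2(X)$, forcing $u_*[C_\alpha]=0$, a contradiction. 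Hence the whole image lies in $\bigcup_i \hD_i$, so $\ev(\cM_{\rm S}(\sigma))\subset \bigcup_i D_i$, which is disjoint from the torus fibre $L$. If you replace your equivariance/dimension step by this component-by-component degree computation, your proof of (iv) goes through.
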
 
\begin{proof} 
The statement (i) is shown in Proposition \ref{prop:effdisc-semipos} 
and (ii), (iii) are obvious. 
To verify the statement (iv), it is enough to show that 
every stable map $u\colon C\to E_j$ representing 
a class $\sigma\in H_2^{\rm sec}(E_j)$ 
with $\lan c_1^{\rm vert}(E_j), \sigma \ran = -1$ 
is contained in $\bigcup_{i=1}^m \hD_i$, where 
\[
\hD_i = D_i\times (\C^2\setminus \{0\})  \big/\C^\times
\]
is a toric divisor of $E_j$. 
Let $C = \bigcup C_\alpha$ be an irreducible decomposition 
of $C$. If $u_*[C_\alpha]$ is a section class, we have  
$\lan c_1^{\rm vert}(E_j), u_*[C_\alpha] \ran \ge -1$ 
by \eqref{eq:split-sigma0} and the semi-positivity of $c_1(X)$. 
If $u_*[C_\alpha]$ is not a section class, $u(C_\alpha)$ is 
contained in a fibre $X$ and we have 
$\lan c_1^{\rm vert}(E_j), u_*[C_\alpha] \ran = 
\lan c_1(X), u_*[C_\alpha] \ran \ge 0$ again by
the semi-positivity. 
Since $\lan c_1^{\rm vert}(E_j), \sigma \ran =-1$, 
we have 
\[
\lan c_1^{\rm vert}(E_j), u_*[C_\alpha] \ran = 
\begin{cases} 
-1 & \text{if $u(C_\alpha)$ is a section;} \\ 
0  & \text{otherwise.} 
\end{cases} 
\]
Suppose that $u(C)\not \subset \bigcup_{i=1}^m \hD_i$. 
Then we can find a component $C_\alpha$ such that 
$u(C_\alpha)$ is not a point and $u(C_\alpha) \not \subset 
\bigcup_{i=1}^m \hD_i$. 
Then $\langle \hD_i , u_*[C_\alpha]\rangle \ge 0$ for all $i$. 
Note that $\sum_{i=1}^m \hD_i$ is the Poincar\'{e} dual 
of $c_1^{\rm vert}(E_j)$. 
By the above calculation we see that  
$\langle \hD_i , u_*[C_\alpha] \rangle =0$ for all $i$ 
and $u(C_\alpha)$ is contained in a certain fibre $X$. 
Then $\langle D_i, u_*[C_\alpha] \rangle =0$ for 
all $i$. A homology class $d\in H_2(X)$ satisfying 
$\langle D_i, d \rangle =0$ for all $i$ is zero.  
This is a contradiction. 
\end{proof} 

Recall from Remark \ref{rem:W-nonsemipos}
that the potential function $W = W(z_1,\dots, z_m)$ of 
a toric manifold $X$ is an element of 
\[
R:= \Q[\![\NE(X)_\Z + (\Z_{\ge 0})^m ]\!] \subset \Lambda^{\rm op}. 
\]
We also set 
\[
K := \Q[\![\NE(X)_\Z]\!] \subset \Lambda. 
\]
Then $R$ is a $K$-algebra (cf.\ \eqref{eq:q-z}). 
For $f\in R$, we write (following notation in 
Theorem \ref{thm:degeneration}): 
\[
df = \left( 
z_1\parfrac{f}{z_1},\dots, z_m\parfrac{f}{z_m} 
\right ) \in \Z^m\otimes R \cong H_2(X,L) \otimes R. 
\]
In other words, 
\[
d z^\beta = \beta \otimes z^\beta 
\]
for $\beta \in H_2(X,L)$. 

We apply Theorem \ref{thm:degeneration} to 
the $\C^\times$-action $\rho_j$ rotating around $D_j$. 
Note that the $k$-th term $w_k$ of the potential $W$ in 
Definition \ref{def:correctionterm} 
corresponds to the boundary class $b_k \in N \cong H_1(L)$ 
and $w_k = W_{b_k}$ in the notation of 
Definition \ref{def:W-gamma}.  
Since the Seidel element $\tS_j$ in \S \ref{subsec:Seidel-toric} 
belongs to $H^2(X)\otimes K$, we have $\tS_j^{(0)}=0$ 
and $\tS_j = \tS_j^{(2)}$.  
By Proposition \ref{prop:toric-assump}, we can define the lift 
\[
\hS_j\in H^2(X,L)\otimes K
\] 
of $\tS_j = \tS_j^{(2)}$ 
as in Definition \ref{def:Seidellift}. 
The class $\lambda$ of an $S^1$-orbit on $L$ is 
$-b_j\in H_1(L)$ 
and the maximal disc class $\alpha_0$ 
is $\beta_j$. Hence we obtain: 

\begin{theorem}
\label{thm:degeneration-toric} 
Assume that the degeneration formula 
(Conjecture \ref{conj:degeneration}) holds 
for $(X,L)$ equipped with the $\C^\times$-action $\rho_j$ 
around the toric divisor $D_j$ (see \S \ref{subsec:Seidel-toric}). 
Then we have 
\begin{equation} 
\label{eq:degenerationformula-toric} 
\langle  \hS_j, dw_k\rangle =\delta_{jk} z_j. 
\end{equation} 
In particular, we have $\langle \hS_j, dW \rangle = z_j$. 
\end{theorem}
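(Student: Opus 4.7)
The plan is to specialize Theorem \ref{thm:degeneration} to the $\C^\times$-action $\rho_j$ and to the boundary class $\gamma = b_k \in H_1(L)$. By Proposition \ref{prop:toric-assump}, Assumption \ref{assump:moduli} holds for $(X,L,\rho_j)$, so under the hypothesized Conjecture \ref{conj:degeneration} that theorem delivers
\[
\delta_{b_k+\lambda,\, 0}\, z^{\alpha_0} = \langle \hS_j^{(2)}, dW_{b_k}\rangle + \delta_{b_k,\, 0}\, \tS_j^{(0)}.
\]
The remainder of the proof is to match each of the four quantities $\alpha_0$, $\lambda$, $\tS_j^{(0)}$, $W_{b_k}$ with its toric incarnation.

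As recorded in the paragraph immediately preceding the statement, the maximal fixed component of $\rho_j$ is $D_j$, and rotating a path from $D_j$ to $L$ by $\rho_j$ reproduces the Cho--Oh disc $u_j$ of \eqref{eq:representative-betai}, giving $\alpha_0 = \beta_j$ and $z^{\alpha_0} = z_j$. The $\rho_j$-orbit on $L$ runs opposite to $\partial \beta_j = b_j$, so $\lambda = -b_j$, and since $b_1,\dots,b_m$ are pairwise distinct primitive vectors one has $\delta_{b_k+\lambda,0} = \delta_{jk}$. The Seidel element $\tS_j$ lies in $H^2(X) \otimes K$ as recalled in \S \ref{subsec:Seidel-toric} (alternatively by Theorem \ref{thm:g0}), so $\tS_j^{(0)} = 0$ and $\hS_j^{(2)} = \hS_j$. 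Finally, comparing Definitions \ref{def:correctionterm} and \ref{def:W-gamma} yields $W_{b_k} = w_k$, and $\delta_{b_k, 0} = 0$ because $b_k \neq 0$.

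Substituting these into the displayed formula gives $\delta_{jk}\, z_j = \langle \hS_j, dw_k\rangle$, which is the first assertion. Summing over $k=1,\dots,m$ and using $W = w_1 + \cdots + w_m$ then yields $\langle \hS_j, dW\rangle = z_j$. The one genuinely non-formal ingredient in this argument is Proposition \ref{prop:toric-assump}; its delicate clause is condition (iv) of Assumption \ref{assump:moduli}, where semi-positivity of $c_1(X)$ together with the toric divisor structure forces any section of $E_j$ with vertical Chern number $-1$ into $\bigcup_i \hD_i$ and hence off $L$. That step has already been handled there, so within the present theorem no further obstacle arises.
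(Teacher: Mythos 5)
Your proposal is correct and follows essentially the same route as the paper: Proposition \ref{prop:toric-assump} verifies Assumption \ref{assump:moduli}, Theorem \ref{thm:degeneration} is applied to $\rho_j$ with $\gamma=b_k$, and the identifications $\alpha_0=\beta_j$, $\lambda=-b_j$, $\tS_j^{(0)}=0$, $W_{b_k}=w_k$ are exactly those made in the paper, so that $\delta_{b_k+\lambda,0}z^{\alpha_0}=\delta_{jk}z_j$ gives \eqref{eq:degenerationformula-toric} and summing over $k$ gives $\langle \hS_j, dW\rangle=z_j$. No gaps.
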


\subsubsection{Example} 
Consider the second Hirzebruch surface $\F_2 = \P(\cO_{\P^1}(-2) 
\oplus \cO_{\P^1})$, a compactification of $\cO_{\P^1}(-2)$. 
The divisor matrix \eqref{eq:div-basis} is: 
\[
(m_{ai}) = 
\begin{bmatrix} 
0 & -2 & 1 & 1 \\ 
1 & 1 & 0 & 0 
\end{bmatrix}.  
\]
The column vectors give toric divisors classes $D_1,D_2,D_3,D_4$. 
Here $D_1$ is the $\infty$-section, $D_2$ is the zero-section ($-2$ curve) 
and $D_3$, $D_4$ are fibres.  
The potential function has been calculated by 
Auroux \cite{Au-Sp09}, Fukaya-Oh-Ohta-Ono \cite{FOOO-toricdeg} 
and Chan-Lau \cite{ChLa-open}: 
\[
W = z_1 + (1+q_1) z_2 + z_3 + z_4. 
\]
Therefore we have 
\[
\begin{bmatrix}
d w_1 \\ d w_2 \\  d w_3 \\  d w_4 
\end{bmatrix} 
= 
\begin{bmatrix} 
z_1 &  0 & 0 & 0 \\ 
0  &  (1 -q_1)z_2 & q_1 z_2 & q_1 z_2 \\ 
0  & 0  & z_3 & 0 \\
0 & 0 & 0 & z_4 
\end{bmatrix} 
\]
where we used $q_1 = z_2^{-2} z_3 z_4$ (see \eqref{eq:q-z}) 
and $d (q_1 z_2)  = [
0,  
- q_1z_2,   
q_1 z_2,  
q_1z_2 ]   
$.  
Assuming the degeneration formula \eqref{eq:degenerationformula-toric}, 
we obtain 
\[
\left[\hS_1,\hS_2,\hS_3,\hS_4\right] = 
\left[ D_1, D_2, D_3, D_4 \right] 
\begin{bmatrix} 
1 & 0 & 0 & 0 \\
0 & \frac{1}{1-q_1} & -\frac{q_1}{1-q_1} & -\frac{q_1}{1-q_1} \\ 
0 & 0 & 1 & 0 \\ 
0 & 0 & 0 & 1 
\end{bmatrix}.  
\]
This is compatible with the calculations of $\tS_j$ 
by McDuff-Tolman \cite{McTo-To06} and 
Gonz\'{a}lez-Iritani \cite{GoIr-Se12}.  


\subsection{Kodaira-Spencer map} 
\label{ss:ksmap}
Recall from Definition \ref{def:correctionterm} 
that $w_i = f_i(q) z_i$ for some $f_i(q) \in K$. 
We have (using \eqref{eq:q-z}) 
\[
z_i \parfrac{w_j}{z_i}  = 
\left(\delta_{ij} + z_i\parfrac{f_j(q)}{z_i} \right )z_j 
= \left (\delta_{ij} + 
\sum_{a=1}^r m_{ai} q_a \parfrac{f_j}{q_a}(q)\right ) 
z_j \in  Kz_j. 
\]
Therefore we have an isomorphism of $K$-modules: 
\[
\frks \colon H^2(X,L) \otimes K 
\xrightarrow{\ \cong\ } \bigoplus_{j=1}^m 
K  z_j, 
\quad 
D_i \longmapsto \left( z_i\parfrac{w_1}{z_i}, \dots, 
z_i\parfrac{w_m}{z_i} \right).  
\]
The degeneration formula \eqref{eq:degenerationformula-toric} 
says that $\frks(\hS_i) = z_i$. 
For $\varphi \in H^1(L) = M$, we have 
\begin{align*} 
\frks(\delta\varphi) 
& = 
\bigoplus_{j=1}^m 
\sum_{i=1}^m \langle \varphi, b_i \rangle 
z_i \parfrac{w_j}{z_i}  
= 
\bigoplus_{j=1}^m 
\sum_{i=1}^m \langle \varphi, b_i \rangle 
\left( z_i \delta_{ij} f_j(q)+ z_i z_j \parfrac{f_j(q)}{z_i} \right) 
\\
& =  \bigoplus_{j=1}^m 
\lan \varphi, b_j \ran w_j
\in \bigoplus_{j=1}^m K z_j, 
\end{align*} 
where $\delta\colon H^1(L) \cong M \to H^2(X,L)\cong \Z^m$ 
is a coboundary map. 
Hence $\frks$ induces an isomorphism  
\[
\ks \colon H^2(X) \otimes K
\overset{\cong}{\longrightarrow}  
\bigoplus_{j=1}^m K z_j 
\Big/ 
\left \langle 
\textstyle 
\bigoplus_{j=1}^m \lan \varphi, b_j\ran w_j 
\,:\, \varphi \in M 
\right \rangle_K.   
\]
This satisfies $\ks(\tS_i) = [z_i]$. 
Set $B_j := f_j(q) \tS_j$, $j=1,\dots,m$. 
Then $\ks(B_j) = f_j(q) [z_j] = [w_j]$, 
$j=1,\dots,m$ satisfy the linear relations
\[
\sum_{j=1}^m \lan \varphi, b_j\ran [w_j] = 0 
\] 
for all $\varphi \in M$. 
Consequently,  
\begin{itemize} 
\item $B_j = f_j(q) \tS_j$; 
\item $f_j(q)=1$ if $b_j$ is a vertex
of the fan polytope (Proposition \ref{prop:vertex-vanishing}); 
\item $B_j$, $j=1,\dots,m$ satisfy the linear relations 
(by the injectivity of $\ks$). 
\end{itemize}  
By the characterization of the Batyrev elements 
(see Theorem \ref{thm:GI2}), 
we know that $B_j = \tD_j$, i.e.\ 
$f_j(q) = \exp(g_0^{(j)}(y))$. 
This shows the conjecture of Chan-Lau-Leung-Tseng: 
\begin{theorem} 
\label{thm:CLLT} 
Assume that the degeneration formula (Conjecture \ref{conj:degeneration}) 
holds for $(X,L)$ equipped with the $\C^\times$-actions $\rho_j$, 
$j=1,\dots,m$ in \S \ref{subsec:Seidel-toric}.  
Then Conjecture \ref{conj:CLLT} holds.  
\end{theorem}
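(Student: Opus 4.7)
The plan is to exploit the Kodaira-Spencer–type map introduced in \S \ref{ss:ksmap} to upgrade the relation $\langle \hS_j, dw_k\rangle = \delta_{jk} z_j$ from Theorem \ref{thm:degeneration-toric} into the identification $f_j(q)\tS_j = \tD_j$, and then apply the characterization of Batyrev elements (Theorem \ref{thm:GI2}) to read off $f_j(q) = \exp(g_0^{(j)}(y))$. Throughout, I work over $K = \Q[\![\NE(X)_\Z]\!]$.

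First I would write down the $K$-linear map
\[
\frks \colon H^2(X,L)\otimes K \longrightarrow \bigoplus_{j=1}^m Kz_j,
\qquad
D_i \longmapsto \bigoplus_{j=1}^m z_i\parfrac{w_j}{z_i},
\]
and verify it is an isomorphism. Since $w_j = f_j(q)z_j$ with $f_j(q)=1+O(q)$, the matrix $(z_i\partial w_j/\partial z_i)$ equals $\mathrm{diag}(z_1,\dots,z_m)$ modulo the maximal ideal of $K$, so $\frks$ is an isomorphism of $K$-modules by a standard Nakayama-type argument on the group ring $K$. Next, using Theorem \ref{thm:degeneration-toric} directly on the basis representation of $\hS_j$ in $H^2(X,L)\otimes K$, I would conclude $\frks(\hS_j) = z_j$. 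A short calculation (already sketched in \S \ref{ss:ksmap}) shows that $\frks$ sends $\delta\varphi$ to $\bigoplus_j \langle \varphi,b_j\rangle w_j$ for $\varphi\in H^1(L)\cong M$, so $\frks$ descends to an isomorphism
\[
\ks \colon H^2(X)\otimes K \xrightarrow{\ \cong\ }
\bigoplus_{j=1}^m Kz_j\Big/\Bigl\langle \textstyle\bigoplus_{j=1}^m \langle\varphi,b_j\rangle w_j : \varphi\in M\Bigr\rangle_K
\]
with $\ks(\tS_j)=[z_j]$.

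Then I would set $B_j := f_j(q)\tS_j \in H^2(X)\otimes K$, so that $\ks(B_j) = f_j(q)[z_j] = [w_j]$. The linear relations $\sum_j \langle\varphi,b_j\rangle [w_j]=0$ hold tautologically in the target of $\ks$, and since $\ks$ is injective these pull back to the linear relations $\sum_j \langle\varphi,b_j\rangle B_j=0$ in $H^2(X)\otimes K$. Moreover, whenever $b_j$ is a vertex of the fan polytope, Proposition \ref{prop:vertex-vanishing} gives $f_j(q)=1$, so $B_j = \tS_j$ in this case. Thus the elements $B_1,\dots,B_m$ satisfy conditions (a)--(c) of Theorem \ref{thm:GI2}, which characterize the Batyrev elements uniquely. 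Hence $B_j = \tD_j$, and comparing with Theorem \ref{thm:g0} which gives $\tD_j = \exp(g_0^{(j)}(y))\tS_j$, I conclude $f_j(q) = \exp(g_0^{(j)}(y))$ under mirror transformation, which is Conjecture \ref{conj:CLLT}.

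The main subtlety I anticipate is ensuring that all of the above takes place cleanly inside $H^*(X)\otimes K$ (rather than in $\Lambda^{\rm op}$ where the identification $q^d = z^d$ was made via \eqref{eq:q-z}). In particular, one must check that although the degeneration formula a priori produces an identity in $\Lambda^{\rm op}$, the pairing $\langle \hS_j, dw_k\rangle$ is unambiguously an element of $R = K[\![z_1,\dots,z_m]\!]$, so that comparison of coefficients along each $z_j$-direction lands in $Kz_j$ and yields a well-defined identity at the level of the Kodaira-Spencer map. Once this bookkeeping is in place the rest is a formal deduction from Theorem \ref{thm:GI2}.
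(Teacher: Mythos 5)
Your proposal is correct and follows essentially the same route as the paper: the map $\frks$ with $\frks(\hS_j)=z_j$, the computation $\frks(\delta\varphi)=\bigoplus_j\langle\varphi,b_j\rangle w_j$ inducing $\ks$ with $\ks(\tS_j)=[z_j]$, setting $B_j=f_j(q)\tS_j$ so $\ks(B_j)=[w_j]$, and invoking Proposition \ref{prop:vertex-vanishing} and the injectivity of $\ks$ to verify the hypotheses of Theorem \ref{thm:GI2}, whence $B_j=\tD_j$ and $f_j(q)=\exp(g_0^{(j)}(y))$. Your added remarks on the Nakayama-type invertibility of $(z_i\partial w_j/\partial z_i)$ and on keeping the identity inside $R$ rather than $\Lambda^{\rm op}$ are minor elaborations of bookkeeping the paper leaves implicit.
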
 

\begin{remark} 
Via the natural map $\bigoplus_{j=1}^m K z_j \to R$, 
the map $\ks$ induces the so-called Kodaira-Spencer 
map (cf.\ the discussion at the end of 
\S \ref{subsubsec:conjecture}):   
\[
\KS \colon 
H^2(X) \otimes K \to \Jac(W) 
\]
where the Jacobi algebra $\Jac(W)$ is defined to be 
\[
\Jac(W) := R/R \langle H^1(L), dW \rangle.  
\]
Then we have $\KS(\tS_i) = [z_i]$ and $\KS(\tD_i) = [w_i]$. 
In other words, the Seidel elements are the inverses of 
$[z_i]$ and the Batyrev elements are the inverses of $[w_i]$. 
\end{remark}

\subsection{Consistency check: computing equivariant Seidel elements} 
Here we give a consistency check concerning 
Chan-Lau-Leung-Tseng Conjecture \ref{conj:CLLT} 
and our degeneration formula 
\eqref{eq:degenerationformula-toric}.  
We calculate the lifts $\hS_j$ of Seidel elements 
assuming Conjecture \ref{conj:CLLT} and \eqref{eq:degenerationformula-toric} 
and see that the result is compatible with our previous 
calculation \cite{GoIr-Se12}. 
The lifts $\hS_j$ here should be viewed as the 
$\T$-equivariant Seidel elements since 
$H_\T^2(X) \cong H^2(X,L)$. 


\begin{lemma} 
\label{lem:w-Batyrev} 
Suppose that Conjecture \ref{conj:CLLT} holds. 
Then $w_i = f_i(q) z_i$, $i=1,\dots,m$ 
satisfy the multiplicative relation 
\[
\prod_{j=1}^m w_j^{\lan D_j, d\ran } = y^d \qquad 
\text{for all} \  
d \in H_2(X;\Z). 
\]
In other words, $y_a = \prod_{j=1}^m w_j^{m_{aj}}$, $a=1,\dots, r$.  
\end{lemma}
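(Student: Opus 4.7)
The plan is to expand $\prod_j w_j^{\langle D_j,d\rangle}$ using the definition $w_j = f_j(q) z_j$, and then compare the resulting expression to the known multiplicative relations of the Seidel and Batyrev elements via the hypergeometric factors $\exp(g_0^{(j)}(y))$.

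First, I would separate the product into two factors:
\[
\prod_{j=1}^m w_j^{\langle D_j, d\rangle} = \left(\prod_{j=1}^m f_j(q)^{\langle D_j, d\rangle}\right)\left(\prod_{j=1}^m z_j^{\langle D_j, d\rangle}\right).
\]
The second factor equals $q^d$ directly from the coordinate relation \eqref{eq:q-z}. For the first factor, Conjecture \ref{conj:CLLT} (which we are assuming) gives $f_j(q) = \exp(g_0^{(j)}(y))$ under mirror transformation, so the first factor becomes $\exp\!\bigl(\sum_{j=1}^m \langle D_j, d\rangle\, g_0^{(j)}(y)\bigr)$.

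Next I would evaluate $\sum_j \langle D_j, d\rangle\, g_0^{(j)}(y)$ using the relationship between Seidel and Batyrev elements. By Theorem \ref{thm:g0} we have $\tD_j = \exp(g_0^{(j)}(y))\,\tS_j$. Combining the multiplicative relation \eqref{eq:Seidel-mult} for Seidel elements, $\prod_j \tS_j^{\langle D_j,d\rangle} = q^d$, with the corresponding relation $\prod_j \tD_j^{\langle D_j,d\rangle} = y^d$ for the Batyrev elements recalled in \S\ref{subsubsec:Bat-Sei}, I obtain
\[
y^d = \exp\!\Bigl(\sum_{j=1}^m \langle D_j, d\rangle\, g_0^{(j)}(y)\Bigr)\, q^d,
\]
so the first factor in the expansion of $\prod_j w_j^{\langle D_j,d\rangle}$ is exactly $y^d/q^d$. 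Multiplying by the second factor $q^d$ gives $y^d$, as desired.

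The argument is essentially algebraic once Conjecture \ref{conj:CLLT} is granted; the only point requiring a little care is that the identities live in the completed ring $R = \Q[\![\NE(X)_\Z + (\Z_{\ge 0})^m]\!]$ and that the mirror transformation $\log q_a = \log y_a + g_a(y)$ must be applied consistently. Thus the main thing to keep track of is that the substitutions $q \leftrightarrow y$ on the two sides match up, which follows from the fact that both $\prod_j\tS_j^{\langle D_j,d\rangle}=q^d$ and $\prod_j\tD_j^{\langle D_j,d\rangle}=y^d$ are being used under the same mirror change of variables.
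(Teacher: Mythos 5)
Your proposal is correct and follows essentially the same route as the paper: split $\prod_j w_j^{\langle D_j,d\rangle}$ into $\prod_j f_j(q)^{\langle D_j,d\rangle}\cdot\prod_j z_j^{\langle D_j,d\rangle}$, identify the second factor with $q^d$ via \eqref{eq:q-z}, and use $\tD_j=\exp(g_0^{(j)}(y))\,\tS_j$ together with the two multiplicative relations to see that the first factor is $y^d/q^d$. The paper's proof is just a slightly more compact version of your computation, so no changes are needed.
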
 
\begin{proof} 
Recall that the Seidel and the Batyrev elements 
satisfy the multiplicative relations 
with respect to the quantum product (\S \ref{subsec:Seidel-toric}): 
\begin{align*} 
\prod_{j=1}^m \tD_j^{\lan D_j, d\ran} = y^d, 
\quad 
\prod_{j=1}^m \tS_j^{\lan D_j, d\ran} = q^d. 
\end{align*} 
Hence we have 
\[
\prod_{j=1}^m f_j(q)^{\lan D_j, d\ran} = y^d/q^d. 
\]
Therefore 
\[
\prod_{j=1}^m w_j^{\lan D_j, d\ran} = 
\prod_{j=1}^m \left( f_j(q)^{\lan D_j, d\ran } 
z_j^{\lan D_j, d\ran }\right) 
= (y^d/q^d ) \cdot q^d = y^d. 
\]
\end{proof} 

\begin{theorem} 
\label{thm:equivSeidel}
Assume Conjecture \ref{conj:CLLT} and the degeneration 
formula \eqref{eq:degenerationformula-toric}.  
The lifts $\hS_j$ of the Seidel elements are given by 
\[
\hS_j = e^{- g_0^{(j)}(y)}  
\left(  
D_j - \sum_{i=1}^m D_i 
\sum_{\substack{c_1(X) \cdot d =0, 
D_i \cdot d <0, \\ 
D_k \cdot d \ge 0 \ \text{\rm for all } k \neq i.}}  
(-1)^{\lan D_i, d\ran} \lan D_j, d\ran  
\frac{(-\lan D_i, d\ran -1)!}{
\prod_{k\neq i} \lan D_k, d\ran ! } y^d 
\right) 
\]
under the mirror transformation. 
\end{theorem}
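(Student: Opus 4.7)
The plan is to exploit the uniqueness of the lift enforced by the degeneration formula. By Theorem \ref{thm:degeneration-toric} (together with Proposition \ref{prop:toric-assump} and the assumed Conjecture \ref{conj:CLLT}), any lift $\hS_j \in H^2(X,L) \otimes K$ satisfying $\langle \hS_j, dw_k\rangle = \delta_{jk} z_j$ is uniquely determined, because the matrix $M_{ik} := z_i\, \partial w_k/\partial z_i$ is invertible over $K$: its leading order is $\delta_{ik} f_k(q) = \delta_{ik}(1 + O(q))$ by Example \ref{exa:basicdisc}. Writing $\hS_j = \sum_i h_{ij}(q)\, D_i$, the condition becomes $\sum_i h_{ij}(q)\, M_{ik} = \delta_{jk} z_j$. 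Hence $\hS_j$ represents the logarithmic vector field $z_j\, \partial/\partial w_j$ under the dictionary $D_i \leftrightarrow z_i\, \partial/\partial z_i$, and one needs only to expand this vector field in the $z$-basis.

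Next I would use Conjecture \ref{conj:CLLT} in the form $w_k = e^{g_0^{(k)}(y)} z_k$, together with Lemma \ref{lem:w-Batyrev} which implies $\log y_a = \sum_k m_{ak} \log w_k$. Taking logarithms gives $\log z_i = \log w_i - g_0^{(i)}(y)$, so
\[
\frac{\partial \log z_i}{\partial \log w_j} = \delta_{ij} - \sum_{a=1}^r m_{aj}\, y_a\, \frac{\partial g_0^{(i)}}{\partial y_a},
\]
using $\partial \log y_a/\partial \log w_j = m_{aj}$. Setting $\hD_j := e^{g_0^{(j)}(y)} \hS_j$ (which is automatically a lift of $\tD_j$ by Theorem \ref{thm:g0}), the same manipulation yields
\[
\hD_j = D_j - \sum_{i=1}^m D_i \sum_{a=1}^r m_{aj}\, y_a\, \frac{\partial g_0^{(i)}}{\partial y_a},
\]
and the overall factor $e^{-g_0^{(j)}(y)}$ in the claimed formula arises from $z_j = e^{-g_0^{(j)}(y)} w_j$.

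Finally I would expand the hypergeometric series \eqref{eq:g0j} termwise. Each monomial $y^d$ appearing in $g_0^{(i)}(y)$ picks up a factor $\langle p_a, d\rangle$ upon application of $y_a\, \partial/\partial y_a$, and the identity $\sum_a m_{aj}\langle p_a, d\rangle = \langle D_j, d\rangle$ coming from \eqref{eq:div-basis} produces the factor $\langle D_j, d\rangle$ appearing in the theorem's statement. The factorial and sign data then come directly from reading off the coefficients of $g_0^{(i)}(y)$ in \eqref{eq:g0j}, yielding the claimed expression for $\hS_j$.

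The main obstacle is the combinatorial bookkeeping, in particular reconciling the sign conventions between the hypergeometric coefficients of $g_0^{(i)}$ and the sign $(-1)^{\lan D_j, d\ran}$ appearing in the theorem; this relies on the identity $\lan D_i, d\ran = -\sum_{k\neq i}\lan D_k, d\ran$ enforced by $c_1(X)\cdot d = 0$. One must also confirm that each coefficient $h_{ij}$ lies in $K = \Q[\![\NE(X)_\Z]\!]$ rather than only in a fraction field; this is guaranteed by writing $M^{-1}$ as a geometric series against the upper-unipotent part of $M_{ik}/w_k$ modulo the maximal ideal of $K$. The final expression should be compared with the computation in \cite{GoIr-Se12} to confirm the consistency advertised in the title of this section.
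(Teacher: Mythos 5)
Your route is essentially the paper's own proof: you read the degeneration formula $\langle \hS_j, dw_k\rangle=\delta_{jk}z_j$ as saying that $(z_1^{-1}\hS_1,\dots,z_m^{-1}\hS_m)$ inverts the Jacobi matrix between the coordinates $(w_1,\dots,w_m)$ and $(\log z_1,\dots,\log z_m)$, then use Conjecture \ref{conj:CLLT} in the form $\log z_i=\log w_i-g_0^{(i)}(y)$, Lemma \ref{lem:w-Batyrev} to get $\partial\log y_a/\partial\log w_j=m_{aj}$, and finish by differentiating the hypergeometric series termwise, the factor $\langle D_j,d\rangle$ coming from $\sum_a m_{aj}\langle p_a,d\rangle$. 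Your explicit observation that the matrix $z_i\,\partial w_k/\partial z_i=z_k(\delta_{ik}+O(q))$ is invertible over $K$, so that the lift is uniquely pinned down and the coefficients stay in $K$, is a useful justification that the paper leaves implicit.

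The one step that does not work as you describe is the final ``sign reconciliation''. Termwise differentiation of $g_0^{(i)}$ produces the coefficient $(-1)^{\langle D_i,d\rangle}\,\langle D_j,d\rangle\,(-\langle D_i,d\rangle-1)!/\prod_{k\neq i}\langle D_k,d\rangle!$, i.e.\ the parity of $\langle D_i,d\rangle$ (the summation index), not of $\langle D_j,d\rangle$. The relation $\langle D_i,d\rangle=-\sum_{k\neq i}\langle D_k,d\rangle$ coming from $c_1(X)\cdot d=0$ does not force $\langle D_i,d\rangle\equiv\langle D_j,d\rangle \pmod 2$, so it cannot convert one sign into the other: for $X=\F_2$, $i=2$, $j=3$ and $d$ the class of the $(-2)$-curve one has $\langle D_2,d\rangle=-2$ while $\langle D_3,d\rangle=1$. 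In fact the exponent $j$ in the displayed statement should be read as $i$ (a misprint): with $(-1)^{\langle D_i,d\rangle}$ the formula reproduces the paper's own $\F_2$ table, giving $\hS_3=D_3-\frac{q_1}{1-q_1}D_2$ via $e^{g_0^{(2)}(y)}=1+q_1$, whereas the printed sign does not. So rather than trying to force agreement with the printed exponent through a parity identity, you should simply record the coefficient your (otherwise correct) computation yields; everything else in your argument coincides with the paper's proof.
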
 
\begin{proof} 
Note that $(d w_1,\dots,d w_m)^{\rm T}$ 
can be viewed as the Jacobi matrix between the two 
co-ordinate systems $(w_1,\dots,w_m)$ 
and $(\log z_1,\dots,\log z_m)$ on the 
open-closed moduli space. 
The degeneration formula \eqref{eq:degenerationformula-toric} implies that 
$(z_1^{-1}\hS_1,\dots,z_m^{-1}\hS_m)$ 
is the inverse Jacobi matrix, i.e.\ 
\[
 z_j ^{-1}\hS_j =\sum_{i=1}^m \parfrac{\log z_i}{w_j} D_i 
 = w_j^{-1} \sum_{i=1}^m \parfrac{\log z_i}{\log w_j} D_i 
\]
in $H^2(X,L)$. 
Assuming Conjecture \ref{conj:CLLT}, we have 
$\log z_i = \log w_i - g_0^{(i)}(y)$. 
Hence 
\begin{align*} 
\hS_j & = \frac{z_j}{w_j} 
\sum_{i=1}^m \left(\delta_{ij} - w_j \parfrac{g_0^{(i)}}{w_j} 
\right) D_i \\ 
& = \exp\left(-g_0^{(j)}(y)\right)  
\left(D_j  - \sum_{i=1}^m 
\sum_{a=1}^r m_{aj} y_a \parfrac{g_0^{(i)}}{y_a} D_i
\right). 
\end{align*} 
In the second line, we used Lemma \ref{lem:w-Batyrev}. 
The conclusion follows. 
\end{proof} 

Note that we did not use the lifts $\hS_j$ of the Seidel 
elements (but used only the original Seidel elements $\tS_j$) 
in the proof of Theorem \ref{thm:CLLT}. 

\begin{remark} 
This result is compatible with the calculation 
of $\tS_j$ in our previous paper \cite{GoIr-Se12}. 
Note however that the formula in 
\cite[Lemma 3.17]{GoIr-Se12} 
contains a mistake. 
It occurred from an erroneous cancellation 
between the factors $\lan D_j, d\ran$ in the numerator 
and $\lan D_j, d\ran!$ in the denominator. 
\end{remark} 

\begin{remark} 
It is not difficult to generalize the computation in \cite{GoIr-Se12} 
to the $\T$-equivariant setting and to check the above 
computation of $\hS_j$ without using Conjecture \ref{conj:CLLT} 
and the degeneration formula \eqref{eq:degenerationformula-toric}. 
Since Chan-Lau-Leung-Tseng's conjecture \ref{conj:CLLT} was 
proved by themselves \cite{CLLT12}, it follows that the degeneration 
formula \eqref{eq:degenerationformula-toric} holds true in toric case.  
\end{remark}


\begin{thebibliography}{10}

\bibitem{At-Co82}
  \textsc{M.~F. Atiyah.}
  \newblock Convexity and commuting {H}amiltonians.
\newblock { {Bull. Lond. Math. Soc.}}, 14 (1982), 1--15.

\bibitem{Au-To04}
\textsc{M. Audin.}
\newblock { Torus actions on symplectic manifolds}.
Second revised edition. 
\newblock {{P}rogress in {M}athematics}, 93. 
\newblock Birkh\"auser Verlag, Basel, 2004.

\bibitem{Au-Mi07}
  \textsc{D. Auroux.}
  \newblock Mirror symmetry and {$T$}-duality in the complement of an
  anticanonical divisor.
  \newblock { J. G\"okova Geom. Topol.} GGT, 1 (2007) 51--91.

\bibitem{Au-Sp09}
  \textsc{D. Auroux.}
  \newblock Special {L}agrangian fibrations, wall-crossing, and mirror symmetry.
\newblock In { Surveys in differential geometry. {V}ol. {XIII}. {G}eometry,
  analysis, and algebraic geometry: forty years of the {J}ournal of
  {D}ifferential {G}eometry}, 1--47, { Surv. Differ.
  Geom.}, 13, Int. Press, Somerville, MA, 2009.

\bibitem{Ba-Qu93}
  \textsc{V.~V. Batyrev.}
  \newblock Quantum cohomology rings of toric manifolds.
\newblock Journ{\'e}es de G{\'e}om{\'e}trie Alg{\'e}brique d'Orsay (Orsay,
  1992).
\newblock { Ast\'erisque}, no. 218. (1993) 9--34.


\bibitem{Chan-formula}
  \textsc{K. Chan.}
  \newblock A formula equating open and closed {G}romov-{W}itten invariants and
  its applications to mirror symmetry.
\newblock { Pacific J. Math.}, 254 (2011), no. 2, 275--293.

\bibitem{ChLa-open}
  \textsc{K. Chan and S-C. Lau.}
  \newblock Open {G}romov-{W}itten invariants and superpotentials for semi-{F}ano
  toric surfaces.
\newblock Int. Math. Res. Not. IMRN (2014), no. 14, 3759--3789.


\bibitem{CLLT11}
  \textsc{K. Chan, S-C. Lau, N.C. Leung, and H-H. Tseng.}
  \newblock Open {G}romov-{W}itten invariants and mirror maps for semi-{F}ano
  toric manifolds.
\newblock arXiv:1112.0388.

\bibitem{CLLT12}
  \textsc{K. Chan, S-C. Lau, N.C. Leung, and H-H. Tseng.}
  \newblock Open {G}romov-{W}itten invariants and {S}eidel representations for
  toric manifolds.
\newblock arXiv:1209.6119.

\bibitem{ChOh-Fl06}
  \textsc{C-H. Cho and Y-G. Oh.}
  \newblock Floer cohomology and disc instantons of {L}agrangian torus fibers in
  {F}ano toric manifolds.
\newblock { Asian J. Math.} 10 (2006), no. 4, 773--814.

\bibitem{CoKa-Mi99}
  \textsc{D.~A. Cox and S. Katz.}
  \newblock { Mirror symmetry and algebraic geometry}, volume~68 of {
  Mathematical Surveys and Monographs}.
\newblock American Mathematical Society, Providence, RI, 1999.

\bibitem{CoLi-To10}
  \textsc{D.~A. Cox, J. Little, and H. Schenck.}
  \newblock { Toric Varieties}.
\newblock American Mathematical Society, 2010.

\bibitem{FuOh-La09}
  \textsc{K. Fukaya, Y-G. Oh, H. Ohta, and K. Ono.}
  \newblock { Lagrangian intersection {F}loer theory: anomaly and obstruction.
  {P}art {I}, {P}art {II}.}, volume~46 of { AMS/IP Studies in Advanced
  Mathematics}.
\newblock American Mathematical Society, Providence, RI, 2009.

\bibitem{FuOh-La10A}
  \textsc{K. Fukaya, Y-G. Oh, H. Ohta, and K. Ono.}
  \newblock Lagrangian {F}loer theory and mirror symmetry on compact toric
  manifolds.
\newblock 2010.
\newblock arXiv:1009.1648.

\bibitem{FuOh-La10}
  \textsc{K. Fukaya, Y-G. Oh, H. Ohta, and K. Ono.}
  \newblock Lagrangian {F}loer theory on compact toric manifolds. {I}.
\newblock Duke Math. J. 151 (2010), no. 1, 23--174.

\bibitem{FOOO-technical12}
  \textsc{K. Fukaya, Y-G. Oh, H. Ohta, and K. Ono.}
  \newblock Technical details on {K}uranishi structure and virtual fundamental
  chain.
\newblock 2012.
\newblock arXiv:1209.4410.

\bibitem{FOOO-toricdeg}
  \textsc{K. Fukaya, Y.-G. Oh, H. Ohta, and K. Ono.}
  \newblock Toric degeneration and non-displaceable {L}agrangian tori in
  ${S}^2\times {S}^2$.
\newblock arXiv:1002.1660.

\bibitem{Fukaya-Ono-Top99}
  \textsc{K. Fukaya and K. Ono.}
  \newblock Arnold conjecture and {G}romov-{W}itten invariant.
\newblock Topology 38 (1999), no. 5, 933--1048.

\bibitem{Gi-A-98}
  \textsc{A. Givental.}
  \newblock A mirror theorem for toric complete intersections.
\newblock In { Topological field theory, primitive forms and related topics
  ({K}yoto, 1996)}, volume 160 of { Progr. Math.}, pages 141--175.
  Birkh\"auser Boston, Boston, MA, 1998.

\bibitem{Go-Qu06}
  \textsc{E. Gonz\'alez.}
  \newblock Quantum cohomology and {$S^1$}-actions with isolated fixed points.
\newblock Trans. Amer. Math. Soc. 358 (2006), no. 7, 2927--2948 
(electronic). 

\bibitem{GoIr-Se12}
  \textsc{E. Gonz\'alez and H. Iritani.}
  \newblock Seidel elements and mirror transformations.
\newblock Selecta Math. (N.S.) 18 (2012), no. 3, 557--590.

\bibitem{Gu-Mo94}
  \textsc{V.~Guillemin.}
  \newblock { Moment Maps and Combinatorial Invariants of {H}amiltonian
  {$T^n$}-Spaces}, volume 122 of { Progress in Mathematics}.
\newblock Birkh\"{a}user, Boston, 1994.

\bibitem{KaLi-06}
  \textsc{S. Katz and C-C.~M. Liu.}
  \newblock Enumerative geometry of stable maps with {L}agrangian boundary
  conditions and multiple covers of the disc.
\newblock In {The interaction of finite-type and {G}romov-{W}itten
  invariants ({BIRS} 2003)}, volume~8 of { Geom. Topol. Monogr.}, pages
  1--47. Geom. Topol. Publ., Coventry, 2006.

\bibitem{LaMc-To99}
  \textsc{F. Lalonde, D. McDuff, and L. Polterovich.}
  \newblock Topological rigidity of {H}amiltonian loops and quantum homology.
\newblock Invent. Math. 135 (1999), no. 2, 69--385.

\bibitem{Mc-Qu00}
  \textsc{D. McDuff.}
  \newblock Quantum homology of fibrations over {$S^2$}.
\newblock { Internat. J. Math.}, 11 (2000), no. 5, 665--721.

\bibitem{McTo-To06}
  \textsc{D. McDuff and S. Tolman.}
  \newblock Topological properties of {H}amiltonian circle actions.
\newblock IMRP Int. Math. Res. Pap. (2006), 72826, 1--77.

\bibitem{Se-pi97}
  \textsc{P. Seidel.}
  \newblock {$\pi\sb 1$} of symplectic automorphism groups and invertibles in
  quantum homology rings.
\newblock Geom. Funct. Anal. 7 (1997), no. 6, 1046--1095.

\end{thebibliography}

\def\cprime{$'$}

\end{document}